\documentclass[12pt]{article}
\usepackage[normalem]{ulem}
\usepackage{hyperref}
\usepackage{amsmath,dsfont}
\usepackage{amsthm}
\usepackage{enumerate}
\usepackage{amssymb}
\usepackage[latin1]{inputenc}
\usepackage{color}
\usepackage{graphicx}
\begin{document}
\newcommand{\R}{{\bf R}}
\newcommand{\K}{{\bf K}}
\newcommand{\C}{{\bf C}}
\newcommand{\eqla}{\overset{({\cal L})}{=}}
\newcommand{\cola}{\xrightarrow{({\cal L})}}
\newcommand{\sequ}[2]{(#1_{#2})_{#2\geq 1}}
\newcommand{\rsequ}[1]{(#1_n)_{n=1,2,\dots}}
\newcommand{\seq}[1]{(#1(n))_{n=0,1,\dots}}
\newcommand{\rseq}[1]{(#1_n)_{n=0,1,\dots}}
\newcommand{\css}[1]{{\color{red}#1}}
\newcommand{\cs}[1]{{\color{blue}#1}}
\newcommand{\csl}[1]{{\color{magenta}#1}}
\newcommand{\rf}[1]{(\ref{#1})}
\newtheorem{thm}{Theorem}[section]
\newtheorem{example}{Example}
\newtheorem{defn}[thm]{Definition}
\newtheorem{cor}[thm]{Corollary}
\newcommand{\rtref}[1]{{\rm \ref{#1}}}
\newcommand{\rmref}[1]{{\rm (\ref{#1})}}
\newcommand{\rmcite}[1]{{\rm \cite{#1}}}
\newcommand{\rmciter}[2]{{\rm \cite[#1]{#2}}}
\newtheorem{lem}[thm]{Lemma}
\newtheorem{prop}[thm]{Proposition}
\newtheorem{rem}[thm]{Remark}

	\title {Explicit confidence bands and intervals for distribution functions and their derivatives via random Weierstrass-type operators}
	\author{José A. Adell,\footnote{e-mail: adell@unizar.es, Orcid ID: 0000-0001-8331-5160 } \ \ J. T. Alcalá\footnote{e-mail: jtalcala@unizar.es, Orcid ID: 0000-0001-7549-8825} \\ and  \ \  C. Sangüesa\footnote{e-mail: csangues@unizar.es, Orcid ID: 0000-0002-7099-7665}
		\\ \small Department of Statistical Methods and IUMA, \\ \small University of Zaragoza, Zaragoza, 50009,  SPAIN}

	\date{}

	\begin{titlepage}
		\setcounter{page}{1} \maketitle
		
		\bigskip \bigskip
		\begin{abstract}
			Classical kernel estimators of second order are interpreted in terms of random Weierstrass-type operators, particularly random Steklov operators. This leads us to obtain explicit nonasymptotic confidence bands and intervals for distribution functions $F$ and their derivatives $F^{(k)}$. Under the only assumption that $F^{(k)}$ is uniformly continuous, confidence bands for $F^{(k)}$ are established by using the Dvoretzky-Kiefer-Wolfowitz inequality. To give confidence intervals, we allow $F^{(k)}$ to have isolated discontinuities of the first kind, so that we really estimate the midpoint function $(F^{(k)})_{\star}(x)$. The proofs are based either on concentration inequalities for subordinated stochastic processes or accurate estimates of the MSE of the corresponding estimators. The length of the confidence bands and intervals depends on the degree of smoothness of $F^{(k)}$ measured in terms of the second modulus of continuity. Both lengths are of order $n^{-1 / 2}$ if $F$ is locally a polynomial of degree $k+1$ at most.
			
		\end{abstract}

		\bigskip \bigskip
		2000 Mathematics subject classification: Primary: 62G05, 60E05; secondary: 41A25, 41A36
		
		\bigskip \bigskip
		{\it Key words and phrases}: random Weierstrass-type operator, random Steklov operator, distribution function estimator, density estimator, confidence band, confidence interval, Dvoretzky-Kiefer-Wolfowitz inequality, second modulus of smoothness
		\bigskip \bigskip

		\bigskip \noindent

	\end{titlepage}

	\maketitle
	

\section{Introduction}
Since the pioneering papers by Rosenblat \cite{rorema} and Parzen \cite{paones}, kernel estimation is a powerful technique successfully applied to different branches of statistics, particularly, to the estimation of distribution functions $F$ and their derivatives (see, for instance, Silverman \cite{sidens}, Wand and Jones \cite{wajoke}, Tsybakov \cite{tsyba}, and Scott \cite{scott}, among others). Most of the results found in the literature have an asymptotic character, with no specified rates of convergence.

The aim of this paper is to give explicit nonasymptotic confidence bands and intervals for distribution functions and their derivatives. This is achieved through an interpretation of second-order kernel estimators in terms of random Weierstrass-type positive linear operators.

The main features of this paper are the following. In Section \ref{seweie}, we consider Weierstrass-type operators, mainly Steklov operators of order $m$ (see definition (5)). For these last operators, we give closed form expressions for their derivatives which are easy to handle, as well as local and uniform approximation properties in terms of the second modulus of smoothness of the function under consideration. Local approximation properties hold even if the function has isolated discontinuities of the first kind.

In Section \ref{serand}, we introduce random Weierstrass-type operators and show that classical kernel estimators of second order can be interpreted by means of such operators and their derivatives. For the sake of concreteness, we mainly use in this paper random Steklov operators of order $m$. It turns out that the derivatives of these last random operators are easy to compute. For reasons that will be clear further on, we use the $k$th derivative of the random Steklov operator of order $k+1$ to estimate $F^{(k)}, k=0,1,2, \ldots$ All these random estimators have continuous paths.

In Section \ref{seconf}, we obtain explicit confidence bands for $F^{(k)}$ under the only assumption that $F^{(k)}$ is uniformly continuous. The length of the corresponding confidence band strongly depends on the degree of smoothness of $F^{(k)}$, measured in terms of its second modulus of continuity. In the special case in which $F$ is a polynomial of degree $k+1$ at most on a certain closed interval, the length of the confidence band for $F^{(k)}$ restricted to a bit smaller closed interval is of order $n^{-1 / 2}$. A unified proof based on the Dvoretzky-Kiefer-Wolfowitz inequality is given.

Explicit confidence intervals for distribution functions and their derivatives are collected in Section \ref{seinte}. We emphasize that we allow $F^{(k)}, \ k=0,1,2\dots$, to have isolated discontinuities of the first kind, so that we really estimate the midpoint function $(F^{(k)})_{\star}(x)$ defined in (\ref{D1}) below.

To give confidence intervals for $F_{\star}(x)$, we use random Steklov operators of first order acting on the empirical process. The mean square error (MSE) of such estimators is computed in closed form. In general, the length of the confidence intervals is of order $n^{-1 / 2}$. However, if $F$ is continuous and concentrated on some finite interval $[a, b]$, the length of the confidence intervals at $a$ and $b$ is as short as we wish. An application to discrete random variables is also provided.

Suppose that the probability density $\rho=F^{(1)}$ has isolated discontinuities of the first kind. We use the first derivative of the random Steklov operator of second order as the estimator of $\rho_{\star}(x)$ and exactly compute its MSE. The lengths of the corresponding confidence intervals depend on the local second modulus of smoothness of the auxiliary function $\rho_{x}$ defined in (\ref{D2}). The following particular cases deserve to be mentioned.  If $\rho_{\star}(x)=0$, the length of the confidence interval is asymptotically shorter, whereas if $\rho$ is locally affine, the length is of order $n^{-1 / 2}$. Finally, if $x$ is close to a discontinuity point of $\rho$, we need to take a large sample size to construct the corresponding confidence interval. This is illustrated in Section \ref{sesimu} by considering the Pareto density.

For $k \geq 1$, we use the $k+1$ derivative of the random Steklov operator of order $k+2$ as the estimator of the midpoint function $\left(\rho^{(k)}\right)_{\star}(x)$ and give accurate estimates of its MSE. Again, the length of the confidence intervals depends upon the local second modulus of smoothness of the function $\left(\rho^{(k)}\right)_{x}$. Such a length is of order $n^{-1/ 2}$ if $\rho$ is locally a polynomial of degree $k+1$ at most.

All the proofs concerning confidence intervals, moved to the Appendix, are based on two main tools: either a concentration inequality involving subordinated stochastic processes stated in Lemma \ref{lea1}, or an accurate estimate of the MSE of the estimator under consideration together with Chebyshev's inequality, according to the magnitude of the MSE. Finally, let us say that, in a strict sense, asymptotic and nonasymptotic confidence bands and intervals are not comparable. This notwithstanding, a comparative discussion with other results in the literature is carried out throughout Sections \ref{seconf} and \ref{seinte}.
	
	\section{Weierstrass-type operators} \label{seweie}
	
Let $\mathbb{N}$ be the set of positive integers and $\mathbb{N}_0=\mathbb{N}\cup \{0\}$. Unless otherwise specified, we assume from now on that $x\in \mathbb{R}$, $h>0$, and $m,n\in \mathbb{N}$.  Let $S\subseteq \mathbb{R}$ be either the empty set or a countable ordered set $S=\{x_j,\ j\in \mathbb{Z}\}$ such that
\begin{equation}\inf_{j\in \mathbb{Z} }(x_{j+1}-x_j)=s>0.\label{dfsets}\end{equation} Denote by $ {\cal C}_{S}(\mathbb{R})$ the set of bounded
functions, which are continuous on $\mathbb{R}\setminus S$ and have right and left limits at each point of $S$.   The set of bounded continuous functions is simply denoted by ${\cal C}(\mathbb{R})={\cal C}_{\emptyset}(\mathbb{R})$.
If $f \in {\cal C}_{S}(\mathbb{R})$, we denote
\begin{equation}
	f_{\star}(y)=\frac{1}{2}\left(f\left(y+\right)+f\left(y-\right)\right), \quad y \in \mathbb{R}, \label{D1}
\end{equation}
where $f\left(y+\right)$ and $f\left(y-\right)$ are the right and the left limits of $f$ at $y$, respectively. If $k \in \mathbb{N}$, we denote by ${\cal C}_{S}^{k}(\mathbb{R})$ the set of $k-1$ times differentiable functions such that $f^{(k-1)}$ has a Radon-Nikodym derivative $f^{(k)} \in {\cal C}_{S}(\mathbb{R})$. We also set ${\cal C}^{k}(\mathbb{R})={\cal C}_{\emptyset}^{k}(\mathbb{R})$.  Finally, let $(\widetilde{\Omega},\widetilde{\cal F}, \widetilde{P})$ be a probability space. Expectations and random variables defined on this space are denoted by $\widetilde{\mathbb{E}}$ and $\widetilde{X}$, respectively.

Associated to  a random variable $\widetilde{W}$, we define the Weierstrass-type operator
\begin{equation}
	L_h(f;x)=\mathbb{\widetilde{E}}f(x+h\widetilde{W}),\quad f\in {\cal C}_{S}(\mathbb{R}). \label{dfweie}
\end{equation}
If $\widetilde{W}$ has the standard normal distribution, then $L_h$ is called the Weierstrass operator.  For computational reasons, we consider in this paper the following particular case of (\ref{dfweie}).  Let $\sequ{\widetilde{V}}{j}$ be a sequence of independent copies of a random variable $\widetilde{V}$ uniformly distributed on $[-1,1]$ and set
\begin{equation}
	\widetilde{S}_m=\widetilde{V}_1+\dots+\widetilde{V}_m. \label{dfsuni}
\end{equation}
We define the Steklov operator of order $m$ as

\begin{equation}
	L_{m,h}(f;x)=\mathbb{\widetilde{E}}f(x+h\widetilde{S}_m),\quad f\in {\cal C}_{S}(\mathbb{R}). \label{dfstek}
\end{equation}
These and many other positive linear operators can be found in Altomare and Campiti \cite{alto}.

Recall that the $m$th symmetric differences of $f\in {\cal C}_{S}(\mathbb{R})$ at step $h$ are recursively defined as
\begin{equation}
	\Delta_h^0 f(x)=f(x), \ 	\Delta_h^1f(x)=f(x+h)-f(x-h),\ \Delta_h^mf(x)=\Delta_h^1\left(\Delta_h^{m-1}f\right)(x), \label{dfsydi}
\end{equation}
or, equivalently, by
\begin{equation}
	\Delta_h^mf(x)=\sum_{j=0}^m {m \choose j}(-1)^{m-j}f(x+(2j-m)h).\label{dfdire}
\end{equation}
Denote by $f_{(m)}$ an antiderivative of $f\in  {\cal C}_{S}(\mathbb{R})$ of order $m$, that is,
\[f_{(0)}(x)=f(x),\quad f_{(1)}(x)=\int_0^xf(u)du,\quad f_{(m)}(x)=\int_0^xf_{(m-1)}(u)du. \]
It can be seen by induction on $m$ that
\begin{equation}
		L_{m,h}(f;x)=\frac{1}{(2h)^m}\Delta_h^mf_{(m)}(x), \label{rededi}
\end{equation}
thus implying that
\begin{equation}
\Delta_h^mg(x)=(2h)^m	L_{m,h}(g^{(m)};x),\quad  g\in {\cal C}^m_{S}(\mathbb{R}), \label{restdi}
\end{equation}
as follows by setting $f=g^{(m)}$ in (\ref{rededi}).
The following result can be derived from Feller (cf. \cite[Th. 1.a]{feanin}).  We give here a short proof of it for the sake of completeness.
\begin{prop} Let $\widetilde{F}_m$ be the distribution function of $\widetilde{S}_m$. Then,
	\[\widetilde{F}_m^{(k+1)} (u)=\frac{1}{2^m(m-k-1)!}\sum_{j=0}^{m}{m \choose j}(-1)^j(u-(2j-m))_{+}^{m-k-1},\quad u\in\mathbb{R},\]
	where $k=-1,0,\dots,m-2$, and $y_+=\max(0,y),\ y\in \mathbb{R}$. \label{prdesu}
\end{prop}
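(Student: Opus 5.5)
The plan is to apply formula \rmref{rededi} to a suitable function $f$, which turns the distribution function of $\widetilde{S}_m$ into a symmetric difference of an explicit piecewise polynomial. Then we differentiate $k+1$ times.

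First take $h=1$ and $f={\bf 1}_{(-\infty,0]}$, that is, $f(y)=1$ if $y\le 0$ and $0$ otherwise. This $f$ belongs to ${\cal C}_S(\mathbb{R})$ with $S$ containing the origin (if one prefers, $S=\mathbb{Z}$). Then
\[
L_{m,1}(f;u)=\widetilde{P}(u+\widetilde{S}_m\le 0)=\widetilde{P}(\widetilde{S}_m\le -u)=\widetilde{F}_m(-u).
\]
To avoid the sign flip, use instead $f={\bf 1}_{[0,\infty)}$ together with the symmetry $\widetilde{S}_m\eqla -\widetilde{S}_m$. This gives
\[
\widetilde{F}_m(u)=\widetilde{P}(u+\widetilde{S}_m\ge 0)=L_{m,1}(f;u).
\]
The $m$th antiderivative in \rmref{rededi} may be replaced by any $m$th antiderivative, since $\Delta_1^m$ annihilates polynomials of degree $<m$. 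So we may take $f_{(m)}(y)=y_+^m/m!$, and \rmref{rededi} yields
\[
\widetilde{F}_m(u)=\frac{1}{2^m m!}\sum_{j=0}^m{m\choose j}(-1)^{m-j}\,(u+2j-m)_+^m .
\]

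Next, reindex with $j\mapsto m-j$. This turns $(u+2j-m)_+$ into $(u-(2j-m))_+$ and $(-1)^{m-j}$ into $(-1)^j$, matching the case $k=-1$ of the statement. Strictly, this is the formula for $\widetilde{F}_m^{(0)}$, with $1/(m-(-1)-1)!=1/m!$.

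Finally, differentiate $k+1$ times, for $0\le k+1\le m-1$. The function $y\mapsto y_+^m$ is of class $C^{m-1}$, with
\[
\frac{d^{r}}{du^{r}}\frac{y_+^m}{m!}=\frac{y_+^{m-r}}{(m-r)!},\qquad r\le m-1 .
\]
Hence termwise differentiation is legitimate, and taking $r=k+1$ gives the stated expression with exponent $m-k-1\ge 1$.

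The main point to check is the validity of \rmref{rededi} for the discontinuous $f$ and the antiderivative choice. Its induction proof only uses Fubini, which holds for bounded measurable $f$. The freedom in choosing $f_{(m)}$ follows from the fact that $\Delta_1^m p=0$ for polynomials $p$ of degree less than $m$. One can see this from \rmref{dfsydi}, since each $\Delta^1_h$ lowers the degree by one.

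Alternatively, one can avoid \rmref{rededi} and proceed by induction on $m$ through the convolution $\widetilde{F}_{m}(u)=\frac12\int_{-1}^{1}\widetilde{F}_{m-1}(u-v)\,dv$, which is the same computation in disguise.
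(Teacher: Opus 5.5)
Your proof is correct and is essentially the paper's argument: apply (\ref{rededi}) with $h=1$ to an indicator function, take a truncated-power $m$th antiderivative, expand $\Delta_1^m$ via (\ref{dfdire}), and differentiate termwise. The difference is cosmetic. The paper uses $f=1_{(-\infty,u]}$ evaluated at $0$ with $f_{(m)}(\theta)=(-1)^m(u-\theta)_+^m/m!$, which gives the signs $(-1)^j$ directly, with no appeal to the symmetry of $\widetilde{S}_m$ and no reindexing. Your choice $1_{[0,\infty)}$ has the small merit that $y_+^m/m!$ is exactly the iterated integral from $0$. The paper's antiderivative silently differs from that integral by a polynomial of degree $<m$, which is harmless for precisely the reason you state.
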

\begin{proof} Fix $u \in \mathbb{R}$ and consider the function $f(\theta)=1_{(-\infty, u]}(\theta),\ \theta \in \mathbb{R}$. Observe that
	\[
	f_{(m)}(\theta)=\frac{(-1)^{m}}{m!}(u - \theta)_{+}^{m}, \quad \theta \in \mathbb{R} .
	\]
	Hence, we have from (\ref{dfsydi}), (\ref{dfdire}), and (\ref{restdi})
	\begin{align*}
		&\widetilde{F}_{m}(u)=P\left(\widetilde{S}_{m} \leq u\right)=L_{m, 1}(f ; 0)=\frac{1}{2^{m}} \Delta_{1}^{m} f_{(m)}(0)\\
		&=\frac{1}{2^{m}} \sum_{j=0}^{m}\binom{m}{j}(-1)^{m-j} f_{(m)}(2 j-m)=\frac{1}{2^{m} m!} \sum_{j=0}^{m}\binom{m}{j}(-1)^{j}\left(u -(2 j-m)\right)_{+}^{m}.
	\end{align*}
	Differentiating this formula, we obtain the result.
\end{proof}
The first interesting property of Steklov operators is that they take non-smooth into smooth functions, as shown in the following result.
\begin{prop}
	We have $L_{m,h}\left({\cal C}_{S}(\mathbb{R})\right)\subseteq {\cal C}^{m-1}(\mathbb{R})$. Specifically,
	\begin{equation}
		L_{m,h}^{(k)}(f;x)=\frac{1}{(2h)^k}\Delta_h^k	L_{m-k,h}(f;x), \quad k=0,1,\dots,m-1. \label{stekde}
	\end{equation}
	In addition, if  $f\in {\cal C}^{k}_{S}(\mathbb{R})$, then
	\begin{equation}
		L_{m,h}^{(k)}(f;x)=	L_{m,h}(f^{(k)};x), \quad k=0,1,\dots, m-1. \label{stekd2}
	\end{equation} \label{prstde}
	
\end{prop}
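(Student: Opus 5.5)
The natural route is to differentiate the closed form \rmref{rededi} and argue by induction on $k$. The basic one-step identity is this: if $g$ is locally integrable and bounded, then
\[
\frac{d}{dx}\,\frac{1}{2h}\int_{x-h}^{x+h} g(u)\,du=\frac{1}{2h}\bigl(g(x+h)-g(x-h)\bigr)=\frac{1}{2h}\Delta_h^1 g(x),
\]
and this holds at every $x$ where $g$ is continuous at $x\pm h$. To turn it into a statement about $L_{m,h}$, we write $\widetilde{S}_m=\widetilde{S}_{m-1}+\widetilde{V}_m$ with independent summands and condition on $\widetilde{V}_m$, which gives
\[
L_{m,h}(f;x)=\frac{1}{2h}\int_{x-h}^{x+h}L_{m-1,h}(f;u)\,du .
\]
Equivalently, \rmref{rededi} gives $L_{m,h}(f;x)=(2h)^{-1}\Delta_h^1\bigl(L_{m-1,h}(f)\bigr)_{(1)}(x)$, using that $\Delta_h^1$ commutes with antiderivatives up to constants.

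For $k=1$ and $m\ge 2$, the function $g=L_{m-1,h}(f)$ is continuous. Indeed, $L_{m-1,h}(f;x)=\widetilde{\mathbb{E}}f(x+h\widetilde{S}_{m-1})$, and $\widetilde{S}_{m-1}$ has a density (Proposition \ref{prdesu} with $k=0$), so $x+h\widetilde{S}_{m-1}$ hits the countable set $S$ with probability zero. Dominated convergence then gives continuity, since $f$ is bounded. Differentiating therefore yields $L_{m,h}^{(1)}(f;x)=(2h)^{-1}\Delta_h^1L_{m-1,h}(f;x)$, which is a continuous function of $x$.

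Iterating is straightforward because $\Delta_h^1$ commutes with $L_{j,h}$ (both are translation-invariant linear operations). We get
\[
L^{(k)}_{m,h}(f)=\frac{1}{(2h)^{k-1}}\Delta_h^{k-1}L^{(1)}_{m-k+1,h}(f)=\frac{1}{(2h)^k}\Delta_h^kL_{m-k,h}(f),
\]
valid as long as $m-k\ge1$. For the final step we need $L_{m-k,h}(f)$ continuous, which holds when $m-k\ge1$ by the density argument. The case $L_{0,h}(f)=f$ is excluded because $k\le m-1$. This proves \rmref{stekde}, and continuity of the $(m-1)$th derivative gives $L_{m,h}({\cal C}_S(\mathbb{R}))\subseteq{\cal C}^{m-1}(\mathbb{R})$. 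Boundedness of all the functions involved is immediate.

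For \rmref{stekd2}, take $f\in{\cal C}^k_S(\mathbb{R})$. Using \rmref{restdi} with $g=f$ and $m$ replaced by $k$, we have $\Delta_h^k f=(2h)^kL_{k,h}(f^{(k)})$. Substituting into \rmref{stekde} and using the commutativity and the semigroup property $L_{m-k,h}L_{k,h}=L_{m,h}$ (independent sums) gives
\[
L^{(k)}_{m,h}(f)=\frac{1}{(2h)^k}L_{m-k,h}(\Delta_h^kf)=L_{m-k,h}L_{k,h}(f^{(k)})=L_{m,h}(f^{(k)}).
\]
The main point needing care is the differentiation under a function with jumps in $S$. This is handled by the smoothing step, which ensures that each $L_{j,h}(f)$ with $j\ge1$ is already continuous, so that the fundamental theorem of calculus applies pointwise.
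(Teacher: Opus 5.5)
Your proof is correct, but it is organized differently from the paper's. The paper simply differentiates the closed form (\ref{rededi}). Since $f$ is bounded, $f_{(1)}$ is Lipschitz, so $f_{(m)}$ is classically $m-1$ times continuously differentiable with $f_{(m)}^{(k)}=f_{(m-k)}$. Moreover $\Delta_h^m$, being a finite combination of translates, commutes with differentiation. Thus $L^{(k)}_{m,h}(f;x)=(2h)^{-m}\Delta_h^m f_{(m-k)}(x)$, and writing $\Delta_h^m=\Delta_h^k\Delta_h^{m-k}$ and reapplying (\ref{rededi}) with $m-k$ in place of $m$ gives (\ref{stekde}). Then (\ref{stekd2}) comes from applying (\ref{rededi}) to $f^{(k)}$; this tacitly uses that $(f^{(k)})_{(m)}$ and $f_{(m-k)}$ differ by a polynomial of degree less than $m$, which $\Delta_h^m$ kills.

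You instead peel off one uniform summand at a time, $L_{m,h}=L_{1,h}L_{m-1,h}$, and apply the fundamental theorem of calculus pointwise. That obliges you to prove a continuity statement at every stage, whereas the paper gets all the regularity at once from the smoothness of $f_{(m)}$ and never has to look at the discontinuity set $S$. Your continuity step is also heavier than needed: $L_{1,h}(f;x)=(2h)^{-1}\int_{x-h}^{x+h}f(u)\,du$ is Lipschitz for every bounded measurable $f$. Hence $L_{j,h}(f)=L_{j-1,h}(L_{1,h}f)$ is continuous for $j\ge 1$ without appealing to the density of $\widetilde{S}_j$ or the countability of $S$.

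What your route buys is a more operator-theoretic derivation of (\ref{stekd2}), via (\ref{restdi}), translation invariance, and the semigroup property $L_{m-k,h}L_{k,h}=L_{m,h}$. The integration-constant issue is then confined to (\ref{restdi}), and the argument makes transparent why derivatives can be moved inside the operator.
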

\begin{proof} By (\ref{dfsydi}) and (\ref{rededi}), we have
	\begin{align*}
		L_{m,h}^{(k)}(f;x)=\frac{1}{(2h)^m}\Delta_h^mf_{(m-k)}(x)=\frac{1}{(2h)^k}\Delta_h^k\frac{\Delta_h^{m-k}f_{(m-k)}(x)}{(2h)^{m-k}},
	\end{align*}
	thus showing (\ref{stekde}).  Again from (\ref{rededi}), we have
	\[L_{m,h}(f^{(k)};x)=	\frac{1}{(2h)^m}\Delta_h^mf_{(m-k)}(x)=	L_{m,h}^{(k)}(f;x).\]
	This shows (\ref{stekd2}) and completes the proof.\end{proof}
To give confidence intervals for the derivatives of distribution functions (see Section \ref{seinte}), we will use the following Weierstrass-type operators. For any $k \in \mathbb{N}_{0}$, let $\widetilde{T}_{k}$ be the symmetric random variable
\begin{equation}
	\widetilde{T}_{k}=2 \widetilde{R}_{k} - k, \quad P\left(\widetilde{R}_{k}=j\right)= \frac{\binom{k}{j}^{2}}{\binom{2 k}{k}}, \quad j=0,1, \dots, k .  \label{A}
\end{equation}
Suppose that $\widetilde{T}_{k}$ is independent of $\widetilde{S}_{m - k}, \ k<m$, as defined in (\ref{dfsuni}). We consider the Weiestrass-type operator
\begin{equation}D_{m, k, h}(f ; x)=\widetilde{\mathbb{E}} f\left(x+h\left(\widetilde{T}_{k}+\widetilde{S}_{m-k}\right)\right),\quad k \in \mathbb{N}_{0}, \quad k<m,\quad f \in {\cal C}_{S}(\mathbb{R}).\label{B}\end{equation}
Observe that
\begin{equation}
	D_{m, 0, h}=L_{m, h} . \label{C}
\end{equation}
The local and uniform rates of convergence of these operators can be measured in terms of the second modulus of the function under consideration. To be more precise, let $I$ be a subinterval of $\mathbb{R}$. The local second modulus of smoothness of $f \in {\cal C}_{S}(\mathbb{R})$ is defined as
\[
\omega_{2}(f, I ; \delta)=\sup\{|f(x-\epsilon)-2 f(x)+f(x+\epsilon)|:\ [x-\epsilon, x + \epsilon] \subseteq I, \ 0 \leq \epsilon \leq \delta\}, \ \delta \geq 0 .
\]
We simply denote by $\omega_{2}(f ; \delta):=\omega_{2}(f, \mathbb{R} ; \delta)$. It is well known (cf. \cite{adsaup}) that
\begin{equation}
	\omega_{2}(f , I; a \delta)\leq\lceil a\rceil^{2} \omega_{2}(f , I ; \delta) \leq(1+a)^{2} \omega_{2}(f , I ; \delta),\quad a, \delta \geq 0 , \label{D}
\end{equation}
where $\lceil a\rceil$ stands for the ceiling of $a$.  For any $x \in \mathbb{R}$, and $f \in {\cal C}_{S}(\mathbb{R})$, we write \begin{equation}
	f_{x}(y)=\left(f(y)-f(x+) \right)1_{(x, \infty)}(y)+\left(f(y) - f(x-)\right) 1_{(-\infty, x)}(y), \quad y \in \mathbb{R}, \label{D2}
\end{equation}
where $1_{A}$ stands for the indicator function of the set $A$. Note that $f_{x}$ is continuous at $x$ and $f_{x}(x)=0$.

The second interesting property of Steklov operators, and more generally of the operators $D_{m, k, h}$, is that they approximate functions $f \in {\cal C}_{S}(\mathbb{R})$, as shown is the following result.  If $g$ is a real function defined on $I \subseteq\mathbb{R}$, we denote $\|g\|_I=\sup \{|g(y)|: \ y \in I \}$.  If $I=\mathbb{R}$, we simply write $\|g\|:=\|g\|_\mathbb{R}$.
\begin{prop}Let $f \in {\cal C}_{S}(\mathbb{R})$ and $k \in \mathbb{N}_{0}$, with $k < m$. Then,
	\begin{equation}
		\left\|D_{m, k, h}(f ; x)-f(x)\right\| \leq c_{m, k}\ \omega_{2}(f ; h) \label{F}
	\end{equation}
	and
	\begin{equation}
		\left|D_{m, k, h}(f ; x)-f_{\star}(x)\right| \leq c_{m, k} \ \omega_{2}\left(f_{x},[x - m h, x + m h] ; h\right),\label{G}
	\end{equation}
	where
\begin{equation}
	c_{m, k}=	\frac{1}{2} \widetilde{\mathbb{E}}\left(\left\lceil \left|\widetilde{T}_{k}+\widetilde{S}_{m- k}\right|\right\rceil\right)^{2}\leq\frac{1}{2}\left(1+\sqrt{\frac{k^{2}}{2 k-1}+\frac{m-k}{3}}\right)^{2}. \label{cmk}
\end{equation}
	 \label{prbost}
\end{prop}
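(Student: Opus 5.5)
Write $\widetilde{Z}=\widetilde{T}_{k}+\widetilde{S}_{m-k}$. The plan rests on two facts: $\widetilde{Z}$ is symmetric, since $\widetilde{T}_k$ and $\widetilde{S}_{m-k}$ are independent and each symmetric, and $|\widetilde{Z}|\le k+(m-k)=m$. Symmetry gives
\[
D_{m,k,h}(f;x)-f(x)=\frac12\,\widetilde{\mathbb{E}}\bigl(f(x+h\widetilde{Z})-2f(x)+f(x-h\widetilde{Z})\bigr).
\]
On the event $\{\widetilde Z=z\}$, the integrand is the second symmetric difference of $f$ at step $h|z|$. By the definition of $\omega_2$ and \rmref{D} with $a=|z|$ and $\delta=h$, it is bounded by $\lceil |z|\rceil^2\omega_2(f;h)$. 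Taking expectations gives \rmref{F} with constant $c_{m,k}=\frac12\widetilde{\mathbb{E}}\lceil|\widetilde Z|\rceil^2$.

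For \rmref{G}, I apply the same argument to $f_x$. First, $f_x(x+y)+f_x(x-y)=f(x+y)+f(x-y)-2f_\star(x)$ for $y\neq 0$. Since $P(\widetilde Z=0)=0$ when $m-k\ge1$ ($\widetilde S_{m-k}$ is absolutely continuous), symmetrization yields
\[
D_{m,k,h}(f;x)-f_\star(x)=\frac12\widetilde{\mathbb{E}}\bigl(f_x(x+h\widetilde Z)-2f_x(x)+f_x(x-h\widetilde Z)\bigr),
\]
using $f_x(x)=0$. Because $|h\widetilde Z|\le mh$, every interval $[x-h|\widetilde Z|,x+h|\widetilde Z|]$ lies in $I=[x-mh,x+mh]$. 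The local version of \rmref{D} on $I$ then bounds each term by $\lceil|\widetilde Z|\rceil^2\omega_2(f_x,I;h)$. One subtlety is that \rmref{D} is applied to $f_x$, which is continuous at $x$ but may jump elsewhere in $I$. The inequality \rmref{D} is purely combinatorial: it telescopes second differences over subintervals of $I$ and needs no continuity, so this is fine.

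For the numerical bound in \rmref{cmk}, I use $\lceil a\rceil\le 1+a$ and then Cauchy--Schwarz:
\[
\widetilde{\mathbb{E}}(1+|\widetilde Z|)^2\le \bigl(1+\sqrt{\widetilde{\mathbb{E}}\widetilde Z^2}\bigr)^2 .
\]
Here the step $\widetilde{\mathbb{E}}(1+|\widetilde Z|)^2 = 1+2\widetilde{\mathbb{E}}|\widetilde Z|+\widetilde{\mathbb{E}}\widetilde Z^2$ combined with $\widetilde{\mathbb{E}}|\widetilde Z|\le\sqrt{\widetilde{\mathbb{E}}\widetilde Z^2}$ gives exactly this. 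By independence, $\widetilde{\mathbb{E}}\widetilde Z^2=\mathrm{Var}\,\widetilde T_k+(m-k)/3$, since $\mathrm{Var}\,\widetilde V=1/3$.

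The remaining computation is $\mathrm{Var}\,\widetilde T_k=4\,\mathrm{Var}\,\widetilde R_k$, where $\widetilde R_k$ is hypergeometric: the number of draws from the first $k$ of $2k$ items when $k$ are drawn. Its variance is $k\cdot\frac12\cdot\frac12\cdot\frac{k}{2k-1}=\frac{k^2}{4(2k-1)}$, so $\mathrm{Var}\,\widetilde T_k=k^2/(2k-1)$, which matches \rmref{cmk}. The case $k=0$ is trivial, since then $\widetilde T_0=0$.

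I expect the main obstacle to be the careful justification of the midpoint identity at jump points together with the use of \rmref{D} for the discontinuous function $f_x$. I would also double-check that the hypergeometric identification of \rmref{A} is correct, via Vandermonde's identity $\sum_j\binom kj^2=\binom{2k}k$.
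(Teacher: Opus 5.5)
Your proof is correct and follows essentially the same route as the paper. You symmetrize to write the error as half the expectation of a second difference, and bound it by $\lceil|\widetilde{Z}|\rceil^{2}$ times the modulus via the ceiling inequality for $\omega_2$; for the midpoint estimate this is applied to $f_x$ on $[x-mh,x+mh]$, using $|\widetilde{Z}|\le m$ and $P(\widetilde{Z}=0)=0$. You then bound $c_{m,k}$ through $\lceil a\rceil\le 1+a$, Cauchy--Schwarz and $\widetilde{\mathbb{E}}\widetilde{T}_k^{2}=k^{2}/(2k-1)$.

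The differences are cosmetic. You use the paired identity $f_x(x+y)+f_x(x-y)=f(x+y)+f(x-y)-2f_\star(x)$, whereas the paper uses a pointwise identity for $f(y)-f_\star(x)-f_x(y)$. You also obtain $\mathrm{Var}\,\widetilde{R}_k$ from the hypergeometric formula, whereas the paper computes $\widetilde{\mathbb{E}}\widetilde{R}_k^{2}$ directly.
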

\begin{proof} We first prove inequality \rf{cmk}. We have from (\ref{A})
	\begin{equation}
		\widetilde{\mathbb{E}} \widetilde{R}_{k}^{2}=\frac{k^{2}}{\binom{2k}{k}} \sum_{j=1}^{k}\binom{k-1}{j-1}^{2}=\frac{k^{3}}{2(2 k-1)} . \label{I1}
	\end{equation}
	Since the random variables $k - \widetilde{R}_{k}$ and $\widetilde{R}_{k}$ have the same law, we see that $\widetilde{\mathbb{E}} \widetilde{R}_{k}=k / 2$. We thus have from (\ref{A}) and (\ref{I1})
	\[
	\widetilde{\mathbb{E}} \widetilde{T}_{k}^{2}=4 \widetilde{\mathbb{E}} \widetilde{R}_{k}^{2}-4 k \widetilde{\mathbb{E}} \widetilde{R}_{k}+k^{2}=\frac{k^{2}}{2 k-1},
	\]
	which implies that
	\[
	\widetilde{\mathbb{E}}\left(\widetilde{T}_{k}+\widetilde{S}_{m - k}\right)^{2}=\frac{k^{2}}{2 k-1}+\frac{m - k}{3} .
	\]
	This, Schwarz's inequality and the fact that  $\lceil a\rceil\leq 1+a,\ a\geq 0$,  show (\ref{cmk}).
	
	Let $y \in \mathbb{R}$ and denote $\widetilde{W}=\widetilde{T}_{k}+\widetilde{S}_{m-k}$. Since $\widetilde{W}$ is symmetric, we have from (\ref{D})
	\begin{align}
		&	|\widetilde{\mathbb{E}} f(y+h \widetilde{W})-f(y)|=\frac{1}{2}|\widetilde{\mathbb{E}}(f(y+h \widetilde{W})-2 f(y)+f(y-h \widetilde{W}))| \nonumber\\
		&\leq\frac{1}{2} \widetilde{\mathbb{E}} \omega_{2}\left(f; h|\widetilde{W}|\right) \leq \frac{1}{2} \widetilde{\mathbb{E}}\lceil|\widetilde{W}|\rceil ^{2}\omega_{2}\left(f ; h\right).\label{J}\end{align}
This shows (\ref{F}).  On the other hand, let $x \in \mathbb{R}$. By considering the cases $y>x$, $y = x$, and $y<x$, we have from \rf{D1} and \rf{D2}
\begin{align*}
	&f(y)-f_{\star}(x)-f_{x}(y)\\&=(f(x+)-f(y))\left(1_{(x, \infty)}(y)-\frac{1}{2}\right)+(f(x-)-f(y))\left(1_{(-\infty, x)}(y)-\frac{1}{2}\right), \quad y \in \mathbb{R}.
\end{align*}
We replace $y$ by $x+h \widetilde{W}$ in this identity and then take expectations. Since $\widetilde{W}$ is symmetric and absolutely continuous, we get
\[\widetilde{\mathbb{E}} f(x+h \widetilde{W})-f_{\star}(x)=\widetilde{\mathbb{E}} f_{x}(x+h \widetilde{W})=\widetilde{\mathbb{E}} f_{x}(x+h \widetilde{W})-f_{x}(x).\]
Thus, (\ref{G}) follows as in (\ref{J}) by taking into account that $|\widetilde{W}| \leq m$. \end{proof}
	\begin{rem} Direct computations show that
		\[c_{1,0}=\frac{1}{2}\quad \hbox{ and } \quad c_{2,0}=\frac{7}{8}.\]
	\label{re14}
	\end{rem}
\begin{rem}	Let $f \in {\cal C}_{S}(\mathbb{R})$. If $f$ is uniformly continuous, then $\omega_{2}(f ; h) \rightarrow 0$, as $h \rightarrow 0$.  Also, it follows from (\ref{dfsets}) and (\ref{D2}) that
\[\omega_{2}\left(f_{x},[x - m h, x+m h] ; h\right) \rightarrow 0, \quad h \rightarrow 0 . \]\label{re15}
\end{rem}

We point out that property (\ref{G}) and Remark \ref{re15} will play a crucial role to give confidence intervals for distribution functions and their derivatives having isolated discontinuities of the first kind.
		\section{Random Weierstrass-type operators} \label{serand}
	Roughly speaking, if in formula (\ref{dfweie}) we replace the function $f\in {\cal C}_S(\mathbb{R})$ by a bounded random function independent of $\widetilde{W}$, we obtain a random Weierstrass-type operator. More precisely, let $(\Omega,{\cal F}, P)$ be a probability space. Expectations and random variables defined on this space are denoted by $\mathbb{E}$ and $Y$, respectively.  We consider the product  $(\Omega\times \widetilde{\Omega},{\cal F}\otimes \widetilde{\cal F}, P\times \widetilde{P})$. Expectations on this space are denoted by $\mathbb{E}\times\widetilde{\mathbb{E}}$.
	Suppose that $Y$ (resp. $\widetilde{W}$) is a real-valued random variable defined on $\Omega$ (resp. $\widetilde{\Omega}$).   We can always assume that these two random variables are defined on  $\Omega\times \widetilde{\Omega}$ via the formulas
	\[	
	Y_1(\omega,\widetilde{\omega})=Y(\omega),\quad \widetilde{W}_1(\omega,\widetilde{\omega})=\widetilde{W}( \widetilde{\omega}),\quad (\omega,\widetilde{\omega})\in\Omega\times\widetilde{\Omega}.
	\]
	It is easily seen that 	$Y_1$ and $\widetilde{W}_1$ are independent.
	
	We always consider bounded stochastic processes $\mathbb{Y}=\left(Y(x),\ x\in \mathbb{R}\right)$ defined on $\Omega$ such that the map $(x,\omega) \rightarrow Y(x)(\omega),\ x\in \mathbb{R},\ \omega \in \Omega  $, is jointly measurable.  This is the case, for instance, if $\mathbb{Y}$ has right-continuous paths.  In such a case, $Y(x+h\widetilde{W})$ is a random variable defined on $\Omega\times \widetilde{\Omega}$.
	
	Let $\widetilde{W}$ be as in (\ref{dfweie}) and let $\mathbb{Y}_n=\left(Y_n(x),\ x\in \mathbb{R}\right)$ be a bounded stochastic process defined on $\Omega$.  The random Weierstrass-type operator associated to $\mathbb{Y}_n$ is defined as
	\begin{equation}
		L_h( Y_n;x)=\widetilde{\mathbb{E}}Y_n(x+h\widetilde{W}).\label{dfrast}
	\end{equation}
	The motivation of this definition comes from the following estimation problem.  Suppose that $\mathbb{E}Y_n(x)=\mu(x)$, where $\mu\in {\cal C}_S(\mathbb{R})$ is unknown, as well as,
\begin{equation}
	\lim _{n \rightarrow \infty} \mathbb{E}\left(Y_{n}(x)-\mu(x)\right)^{2}=0, \quad x \in \mathbb{R} . \label{K}
\end{equation}
Instead of using the process $\mathbb{Y}_n$ as an estimator of $\mu$, we consider the identity
			\begin{equation}
			L_h(Y_n;x)-\mu_{\star}(x)=L_h(Y_n-\mu;x)+L_h(\mu;x)-\mu_{\star}(x),\label{decomp}
		\end{equation}
	where $\mu_{\star}$ is defined in (\ref{D1}).  The random term on the right-hand side in (\ref{decomp}) has zero $\mathbb{E}$-expectation, as follows from \rf{dfrast} and Fubini's theorem.
	
	Moreover, we see from Schwarz's inequality and Fubini's theorem that
	\begin{align*}
		&\lim _{n \rightarrow \infty} \mathbb{E}\left(L_{h}\left(Y_{n}-\mu ; x\right)\right)^{2}=\lim _{n \rightarrow \infty} \mathbb{E}\left(\widetilde{\mathbb{E}}\left(Y_{n}(x+h \widetilde{W})-\mu(x+h \widetilde{W})\right)\right)^{2}\\&\leq\lim _{n \rightarrow \infty} \widetilde{\mathbb{E}} \times \mathbb{E}\left(Y_{n}(x+h \widetilde{W}) - \mu(x+h \widetilde{W})\right)^{2}=0,\end{align*} where the last equality follows from (\ref{K}) and the dominated convergence theorem.

The deterministic term on the right-hand side in (\ref{decomp}) converges to zero, as $h \rightarrow 0$, provided that $\mu \in {\cal C}(\mathbb{R})$. More generally, if $\mu \in {\cal C}_{S}(\mathbb{R})$ the same property is true, whenever $\widetilde{W}$ is symmetric and absolutely continuous. The proof is similar to that given in Proposition \ref{prbost}.  If the operator $L_h$ takes non-smooth into smooth functions, identity (\ref{decomp}) implies that we can estimate $\mu_{\star}(x)$ by the smooth approximant $L_h(Y_n;x)$.  If, in addition, $\mu \in {\cal C}_{S}^{k}(\mathbb{R})$, we can differentiate in (\ref{decomp}) in order to estimate the derivatives of $\mu$ via the approximation
	\begin{equation}
		L_h^{(k)}(Y_n;x)-(\mu^{(k)})_{\star}(x)=L_h^{(k)}(Y_n-\mu;x)+L_h^{(k)}(\mu;x)-(\mu^{(k)})_{\star}(x),\quad k\in \mathbb{N}.\label{decomd}
	\end{equation}
	
	In this paper, attention is focused on the case in which $\mu=F\in {\cal C}_S(\mathbb{R})$ is the distribution function of a certain random variable $X$ and $\mathbb{Y}_n=\hat{\mathbb{F}}_n=(\hat{\mathbb{F}}_n(x), \ x\in \mathbb{R})$ is its corresponding empirical process. In this setting, similar formulas to identity (\ref{decomd}), with $(\mu^{(k)})_{\star}(x)$ replaced by $\mu^{(k)}(x)$, are widely used (see Schuster \cite{scesti}, Silverman \cite{siweak}, and Scott \cite{scott}, among others).  Note that, in accordance with the usual notation for the empirical process, we omit in this case the rule for notation of random variables defined on $\Omega$ stated at the beginning of this section. Specifically, let $\sequ{X}{j}$ be a sequence of independent copies of $X$ and define
		\begin{equation}
		\hat{F}_n(x)=\frac{1}{n}\sum_{j=1}^{n}1_{(-\infty,x]}(X_j), \quad x\in \mathbb{R}. \label{pampiri}
	\end{equation}
	Alternatively (cf. Shorack and Wellner \cite{shweem}), $\hat{\mathbb{F}}_n$ can be rewritten as follows. Let $\left(U_j\right)_{j\geq 1}$ be a sequence of independent copies of a random variable $U$ uniformly distributed on $[0,1]$ and denote
\[	
		S_n(x)=\sum_{j=1}^{n}1_{[0,x]}(U_j), \quad x\in [0,1].\]
Then,
	\begin{equation}
	\left(	\hat{F}_n(x), \  x\in \mathbb{R}   \right)\eqla \left(\frac{S_n(F(x))}{n},\ x\in \mathbb{R}\right), \label{eqlaym}
	\end{equation}
where $\eqla$ stands for equality in law.

Suppose that $\widetilde{W}$ is a symmetric random variable with distribution function $\widetilde{F}$ and probability density $\widetilde{\rho}$. Then, it follows from (\ref{pampiri}) that \begin{equation}
L_{h}\left(\hat{F}_{n} ; x\right)=\frac{1}{n} \sum_{j=1}^{n} \widetilde{\mathbb{E}} 1_{(-\infty, x+h \widetilde{W}]}\left(X_{j}\right)=\frac{1}{n} \sum_{j=1}^{n} \widetilde{F}\left(\frac{x-X_{j}}{h}\right) \label{M}	
\end{equation}
is the estimator of $F_{\star}$. If, in addition, $X$ is absolutely continuous with density $\rho$, we can differentiate in (\ref{M}) to obtain
\begin{equation}
	L_{h}^{(1)}\left(\hat{F}_{n} ; x\right)=\frac{1}{n h} \sum_{j=1}^{n} \widetilde{\rho}\left(\frac{x-X_{j}}{h}\right), \label{star}
\end{equation}
which is the kernel estimator of $\rho_\star$.  In this way, classical kernel estimators of second order are interpreted in terms of the first derivative of random Weierstrass-type operators.  A different interpretation in terms of convolutions can be found in Tenkorang et al. \cite{tekern}.

As said in the Introduction, the aim of this paper is to obtain explicit confidence bands and intervals for distribution functions and their derivatives. For this reason, we will not consider general random operators as those in (\ref{dfrast}), but random Steklov operators. Indeed, recalling (\ref{dfstek}) and (\ref{eqlaym}), we define the random Steklov operator of order $m$ as
\begin{equation}
	L_{m, h}\left(\hat{F}_{n} ; x\right)=\widetilde{\mathbb{E}} \hat{F}_{n}\left(x+h \widetilde{S}_{m}\right)=\frac{1}{n} \widetilde{\mathbb{E}} S_{n}\left(F\left(x+h \widetilde{S}_{m}\right)\right) . \label{24}
\end{equation}
The following proposition gives explicit expressions for $L_{m,h}^{(k)}(\hat{F}_n;x)$.
	\begin{prop}
		For any $k=0,\dots m-1$, we have
		\[	L_{m,h}^{(k)}(\hat{F}_n;x)=\frac{1}{(2h)^k}\Delta_h^k	L_{m-k,h}(\hat{F}_n;x)=\frac{1}{nh^k}\sum_{j=1}^{n}\widetilde{F}_m^{(k)}\left(\frac{x-X_j}{h}\right),\]
		where $\widetilde{F}_m$ is the distribution function of $\widetilde{S}_m$. \label{prderast}
	\end{prop}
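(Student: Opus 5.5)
The plan is to reduce everything to a deterministic identity for a single indicator function and then average over the sample. Since $\hat F_n(y)=\frac1n\sum_{j=1}^n 1_{[X_j,\infty)}(y)$ and $L_{m,h}$ is linear, it suffices to prove both equalities with $\hat F_n$ replaced by the bounded function $g_a(y)=1_{[a,\infty)}(y)$ for fixed $a=X_j(\omega)$. This $g_a$ lies in ${\cal C}_{S}(\mathbb{R})$ with $S=\{a\}$, a one-point set, which we may extend to a set satisfying (\ref{dfsets}); alternatively we note that the proof of Proposition \ref{prstde} only uses (\ref{rededi}), which holds for any bounded measurable function. We then sum over $j$ and divide by $n$.

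The first equality, $L_{m,h}^{(k)}(\hat F_n;x)=(2h)^{-k}\Delta_h^kL_{m-k,h}(\hat F_n;x)$, is exactly (\ref{stekde}) of Proposition \ref{prstde} applied pathwise, that is, for each fixed $\omega$, to the bounded right-continuous function $\hat F_n(\cdot)(\omega)$. For the second equality, we compute $L_{m,h}(g_a;x)=\widetilde P(x+h\widetilde S_m\ge a)=\widetilde P(\widetilde S_m\le (x-a)/h)=\widetilde F_m((x-a)/h)$. Here we use the symmetry of $\widetilde S_m$ and the fact that it has no atoms, which is the same computation as in (\ref{M}) with $\widetilde W=\widetilde S_m$. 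Proposition \ref{prdesu} shows that $\widetilde F_m$ is $m-1$ times differentiable, since $\widetilde F_m^{(k+1)}$ is given by a continuous truncated-power expression for $k\le m-2$. Differentiating $k$ times in $x$ by the chain rule then gives
\[
L_{m,h}^{(k)}(g_a;x)=h^{-k}\widetilde F_m^{(k)}\left(\frac{x-a}{h}\right),\quad k=0,\dots,m-1.
\]
Setting $a=X_j$ and averaging yields the stated formula.

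The only delicate point is the justification that (\ref{stekde}) applies to a function with a jump, together with the differentiability of $\widetilde F_m$ up to order $m-1$. I expect the first of these to be the main (though minor) obstacle. I would handle it by observing that the induction behind (\ref{rededi}) only needs $f$ bounded and locally integrable, so $f_{(m)}$ is well defined and the derivative computation in the proof of Proposition \ref{prstde} goes through unchanged. As a consistency check one can also verify directly that $(2h)^{-k}\Delta_h^k\widetilde F_{m-k}((\cdot-a)/h)$ equals $h^{-k}\widetilde F_m^{(k)}((\cdot-a)/h)$ using (\ref{restdi}), but this is not needed for the argument.
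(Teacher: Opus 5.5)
Your proposal is correct and is essentially the paper's proof. The first equality is (\ref{stekde}) applied pathwise, and the second follows exactly as in (\ref{M})--(\ref{star}) from $L_{m,h}(\hat F_n;x)=\frac1n\sum_{j=1}^n\widetilde F_m((x-X_j)/h)$ by differentiating $k$ times, which is legitimate since $\widetilde F_m$ is $m-1$ times continuously differentiable by Proposition \ref{prdesu}. Your check that (\ref{stekde}) applies to the step function $\hat F_n(\cdot)(\omega)$ fills in a detail the paper leaves implicit, and the no-atom property you invoke is harmless but unnecessary, since symmetry alone gives $\widetilde P(\widetilde S_m\ge c)=\widetilde F_m(-c)$.
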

\begin{proof} The first equality readily follows from (\ref{stekde}).  The second one follows as in (\ref{star}).
\end{proof}
The first equality in Proposition \ref{prderast} is interesting for theoretical purposes, whereas the second one is useful for computations and simulations.

\section{Confidence bands for $F^{(k)}$}\label{seconf}
From now on, we fix $0<\alpha<1$, $1-\alpha$ being the confidence level. Given two sequences $\left(x_{k}\right)_{k \geq 1}$ and $\left(y_{k}\right)_{k \geq 1}$, we set $x_{k} \sim y_{k}$ if
\begin{equation}
-\infty<\underset{k \rightarrow \infty}{\underline{\lim}} \frac{x_{k}}{y_{k}} \leq \underset{k \rightarrow \infty}{\overline{\lim}} \frac{x_{k}}{y_{k}} < \infty . \label{deriem}
\end{equation}
To give confidence bands for $F^{(k)}$, we use the celebrated Dvoretzky-Kiefer-Wolfowitz inequality (cf. \cite{dvkias}) in the final form shown by Massart \cite{mathet}, which establishes that
\begin{equation}
	P\left(\left\|\frac{S_{n}(x)}{n} - x\right\|_{[0,1]}>\delta\right) \leq 2 e^{-2 n \delta^{2}}, \quad \delta>0 . \label{40}
\end{equation}
Note that the parameter $m$ in the Steklov operator $L_{m, h}$ only provides smoothness, as seen in Proposition \ref{prstde}. However, as far as rates of convergence are concerned, we should choose $m$ as small as possible, as follows from Proposition \ref{prbost}, particularly from the coefficients $c_{m,k}$ defined in (\ref{cmk}). For these reasons, we choose $m=k+1$ in order to estimate $(F^{(k)})_{\star}$. In other words, let us replace $L_{h}$ by $L_{k+1, h}$ and $\mu$ by $F$ in identity (\ref{decomd}). Choose $\mathbb{Y}_{n}=\hat{\mathbb{F}}_{n}$ as in (\ref{eqlaym}), and define the random variable
\begin{equation}
	T_{k,n}=\frac{\Delta_{h}^{k} L_{1, h}\left(\hat{F}_{n}-F;x\right)}{(2 h)^{k}}=\frac{1}{(2 h)^{k}} \widetilde{\mathbb{E}} \Delta_{h}^{k}\left(\frac{S_{n}\left(F\left(x+h \widetilde{S}_{1}\right)\right)}{n}- F\left(x+h \widetilde{S}_{1}\right)\right). \label{dftkh}
\end{equation}
 With the previous choices, we obtain the basic identity
\begin{align}
	& L_{k+1,h}^{(k)}\left(\hat{F}_{n} ; x\right)-(F^{(k)})_{\star}(x)=L_{k+1,h}^{(k)}\left(\hat{F}_{n}-F ; x\right)+L_{k+1, h}^{(k)}(F ; x)-(F^{(k)})_{\star}(x) \nonumber \\
	& =	T_{k,n}+L_{k+1, h}\left(F^{(k)} ; x\right)-(F^{(k)})_{\star}(x), \quad F \in {\cal C}_{S}^{k}(\mathbb{R}),\quad k \in \mathbb{N}_{0}, \label{31v3}
\end{align}
where we have used (\ref{stekde}) and  (\ref{stekd2}). Note that the dependence on $x$ and $h$ of the random variable $T_{k, n}$ has been dropped.

Let $k \in \mathbb{N}_{0}$. From Proposition \ref{prderast}, the stochastic process

\begin{equation*}
	\left(L_{k+1,h}^{(k)}\left(\hat{F}_{n} ;x\right)-F^{(k)}(x), \quad x \in \mathbb{R}\right)
\end{equation*}
has continuous paths if $F^{(k)}$ is continuous. Therefore, its supremum norm is a random variable. Denote
\begin{equation}
	t_{\alpha}=\sqrt{2 \log (2 / \alpha)}. \label{41}
	\end{equation}
	The first main result of this paper is the following.
\begin{thm} Let $k \in \mathbb{N}_{0}$ and assume that $F^{(k)}$ is uniformly continuous. Let $b_{k}(h)$, $h \geq 0$, be a nondecreasing continuous function such that $b_{k}(0)=0$ and
\begin{equation}
	\left\|L_{k+1, h}\left(F^{(k)} ; x\right) - F^{(k)}(x)\right\| \leq b_{k}(h) . \label{b}
\end{equation}
Then,
\begin{equation}
	P\left(\left\|L_{k+1, h}^{(k)}\left(\hat{F}_{n} ; x\right) - F^{(k)}(x)\right\|> \frac{t_{\alpha}}{2 h^{k}\sqrt{n} }+ b_{k}(h)\right) \leq \alpha . \label{c3}
\end{equation}
\label{teban1}
\end{thm}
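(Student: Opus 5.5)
We start from the basic identity \rf{31v3}. Since $F^{(k)}$ is continuous, $(F^{(k)})_\star=F^{(k)}$, and the triangle inequality gives, for every $x$,
\[
\left|L_{k+1,h}^{(k)}(\hat F_n;x)-F^{(k)}(x)\right|\le |T_{k,n}(x)|+b_k(h),
\]
where we used \rf{b}. The statement then reduces to showing that
\[
\|T_{k,n}\|\le \frac{1}{2h^k}\,\Big\|\frac{S_n(u)}{n}-u\Big\|_{[0,1]}.
\]
Once this holds, the event in \rf{c3} is contained in $\{\|S_n(u)/n-u\|_{[0,1]}>t_\alpha/(2\sqrt n)\}$. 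By the DKW--Massart inequality \rf{40} with $\delta=t_\alpha/(2\sqrt n)$, its probability is at most $2e^{-t_\alpha^2/2}=2e^{-\log(2/\alpha)}=\alpha$, using \rf{41}. Here we also use the equality in law \rf{eqlaym} to pass between $\hat F_n-F$ and $S_n(F(\cdot))/n-F(\cdot)$, which is legitimate because both the supremum and $T_{k,n}$ are functionals of the whole path.

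The key step is the deterministic bound on $T_{k,n}$. Write $G=\hat F_n-F$, so that $\|G\|\le \Delta_n:=\|S_n(u)/n-u\|_{[0,1]}$ in law. By \rf{dftkh}, $T_{k,n}=(2h)^{-k}\Delta_h^kL_{1,h}(G;x)$.

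\emph{Case $k=0$.} Here $T_{0,n}=\widetilde{\mathbb E}G(x+h\widetilde V)$, so $|T_{0,n}|\le\|G\|$. This is only half of what we want, since the target bound is $\tfrac12\|G\|$. We must therefore re-examine the constant: for $k=0$ the threshold in \rf{c3} is $t_\alpha/(2\sqrt n)$. Setting $\delta=t_\alpha/(2\sqrt n)$ in \rf{40} gives $2e^{-2n\delta^2}=2e^{-t_\alpha^2/2}=\alpha$. So the needed bound for $k=0$ is simply $\|T_{0,n}\|\le\|G\|$, and the factor $\tfrac12$ is absorbed by $t_\alpha=\sqrt{2\log(2/\alpha)}$. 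Hence the general target is
\[
\|T_{k,n}\|\le h^{-k}\,2^{1-k}\|G\|\cdot\tfrac12 = \frac{\|G\|}{2^{k}h^{k}}\cdot 2^{\,?}
\]
and we need to fix the correct power; concretely, we must show $\|T_{k,n}\|\le \|G\|/h^k$ up to the constant that matches \rf{c3}.

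\emph{Case $k\ge1$.} Use the first form of Proposition \ref{prderast}, and the fact that one difference can be absorbed into the Steklov average. Indeed, $\frac{1}{2h}\Delta_h^1L_{1,h}(G;x)=\frac{1}{4h^2}\Delta^2_hG_{(2)}$ is the derivative of $L_{2,h}(G)$. More directly, $\frac1{2h}\Delta_h^1L_{1,h}(G;x)=\frac{1}{4h^2}\int_{-h}^{h}\big(G(x+h+v)-G(x-h+v)\big)\,dv$, which is bounded by $\frac{2h\cdot 2\|G\|}{4h^2}$. Better, it equals $\frac{1}{4h^2}\big(\int_{x}^{x+2h}G-\int_{x-2h}^{x}G\big)$, so its absolute value is at most $\|G\|/h$. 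Each of the remaining $k-1$ differences contributes a factor of at most $2/(2h)=1/h$. This gives
\[
|T_{k,n}|\le \frac{\|G\|}{h^{k}}.
\]

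It remains to reconcile this with the constant $t_\alpha/(2h^k\sqrt n)$. The main obstacle is exactly this sharp constant: the crude estimate above loses a factor of $2$ relative to \rf{c3}. I would try to recover it by exploiting that $G$ is a difference of two distribution functions, so $G$ has total variation at most $2$, and the alternating structure of $\Delta_h^k$. Alternatively, I would look for a representation $T_{k,n}=\widetilde{\mathbb E}\,[\text{signed combination of }G]$ with total mass $1/h^k$. If that fails, I would double-check the intended normalisation of $t_\alpha$ against \rf{40}.
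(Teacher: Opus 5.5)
Your ingredients are exactly the paper's. You use the identity \rf{31v3} together with \rf{b}, the deterministic estimate $\|T_{k,n}\|\le h^{-k}\|S_n(u)/n-u\|_{[0,1]}$, and the DKW--Massart inequality \rf{40}. The estimate is \rf{As} in the paper, which reads it off directly from \rf{dfdire}: the coefficients of $\Delta_h^k$ have absolute sum $2^k$ and $|L_{1,h}(G;\cdot)|\le\|G\|$, so your integral representation is unnecessary.

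As written, however, the proposal does not conclude, because you miscount the constant. Write $\Delta_n=\|S_n(u)/n-u\|_{[0,1]}$. Your opening reduction asks for $\|T_{k,n}\|\le\frac{1}{2h^k}\Delta_n$, which is twice what is needed. All you use afterwards is that the event in \rf{c3} is contained in $\{\Delta_n>t_\alpha/(2\sqrt n)\}$. That already follows from the bound $\|T_{k,n}\|\le h^{-k}\Delta_n$ that you did prove: $\|T_{k,n}\|>t_\alpha/(2h^k\sqrt n)$ forces $\Delta_n\ge h^k\|T_{k,n}\|>t_\alpha/(2\sqrt n)$. Then \rf{40} with $\delta=t_\alpha/(2\sqrt n)$ gives $2e^{-2n\delta^2}=2e^{-t_\alpha^2/2}=\alpha$ by \rf{41}.

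This is precisely the computation you did for $k=0$. For $k\ge1$ the factor $h^{-k}$ appears identically in the bound and in the threshold, so nothing changes. There is no lost factor of $2$, and the ``obstacle'' in your last paragraph does not exist. With this one line your argument is the paper's proof.

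You should also abandon the plan to sharpen the constant, because $\|T_{k,n}\|\le\frac{1}{2h^k}\Delta_n$ is false in general. Take $k=0$ and $F$ continuous, so that $\|\hat F_n-F\|$ has the law of $\Delta_n$ by \rf{eqlaym}. For a fixed sample, $T_{0,n}=L_{1,h}(\hat F_n-F;x)\to \hat F_n(x)-F(x)$ as $h\to0$ at every $x$ that is not a sample point. Hence $\|T_{0,n}\|$ gets arbitrarily close to $\|\hat F_n-F\|$, strictly more than half of it whenever it is positive. No total-variation or cancellation argument can produce the factor $\frac12$.
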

\begin{proof} From (\ref{dfdire}) and  (\ref{dftkh}), we see that
\begin{equation}
		\left\|T_{k,n}\right\|\leq \frac{1}{h^{k}}\left\|\frac{S_{n}(x)}{n}-x\right\|_{[0,1]}.\label{As}
\end{equation}
We therefore have from (\ref{40}), (\ref{31v3}) and (\ref{b})
	\begin{align*}
	&	P\left(\left\|L_{k+1, h}^{(k)}\left(\hat{F}_{n} ; x\right) -F^{(k)}(x)\right\| >\frac{t_{\alpha}}{2 h^{k}\sqrt{n} }+ b_{k}(h)\right) \leq P\left(\left\|T_{k,n}\right\|>\frac{t_{\alpha}}{2 h^{k}\sqrt{n} }\right) \\\leq &P\left(\left\|\frac{S_{n}(x)}{n} - x\right\|_{[0,1]}>\frac{t_{\alpha}}{2\sqrt{n} }\right)\leq  2 e^{-t_\alpha^{2}/2}=\alpha,
	\end{align*}
where the last equality follows from (\ref{41}).  The proof is complete.
\end{proof}
The assumption on uniform continuity in Theorem \ref{teban1} cannot be weakened.  In fact, Schuster \cite{scesti} showed that a large class of kernel estimators of $F^{(1)}$ uniformly converge to $F^{(1)}$ with probability one if and only if $F^{(1)}$ is uniformly continuous.  See Silverman \cite{siweak} for related results concerning $F^{(k)},\ k\in \mathbb{N}$.

Let $k=0$. By (\ref{eqlaym}) and (\ref{40}), we have for the empirical process $\hat{\mathbb{F}}_{n}$
\begin{equation}
	P\left(\left\|\hat{F}_{n}(x)-F(x)\right\|>\frac{t_{\alpha}}{2 \sqrt{n}}\right) \leq \alpha . \label{A3}
\end{equation}
As follows from (\ref{b}) and (\ref{c3}), we can choose the bandwidth $h$ in $b_{0}(h)$ as small as we wish. Thus, the estimators $L_{1,h}(\hat{F}_{n} ; x)$ and $\hat{F}_{n}(x)$ have essentially the same behavior. The difference is that $L_{1,h}(\hat{F}_{n} ; x)$ has continuous paths, whereas $\hat{F}_{n}(x)$ has not.

Wang et al. \cite{wachsm} obtained an asymptotic result stating that a continuous confidence band of length $l\sim n^{-1 / 2}$ can be constructed whenever $F$ on $F^{(1)}$ satisfy a Hölder condition. Using sup-functionals of weighted empirical processes, Stepanova and Pavlenko \cite{stpago} and Dümbgen and Wellner \cite{duwean} constructed asymptotic confidence bands for $F$, much shorter in the tails of $F$ than those in (\ref{c3}) for $k=0$. In this regard, Maillard \cite{maloca} obtained exact formulas for the tail probabilities
\[
P\left(\left\|\frac{S_{n}(x)}{n} - x\right\|_{[a, b]}>\delta\right), \quad 0 \leq a \leq b \leq 1 .
\]
Using these formulas for the intervals $[0,b]$ and $[a, 1]$ and proceeding as in the proof of Theorem \ref{teban1}, it seems possible to improve this theorem, particularly for the tails of $F$. However, this will not pursued here.

Suppose that $k \in \mathbb{N}$. By  (\ref{C}), Proposition \ref{prbost} with $m=k+1$, and (\ref{b}), the function $b_{k}(h)$ can be chosen as
\begin{equation}
	b_{k}(h)=e_{k+1} \omega_{2}\left(F^{(k)} ; h\right), \quad c_{k+1,0} \leq e_{k+1}:=\frac{1}{2}\left(1+\sqrt{\frac{k+1}{3}}\right)^{2} . \label{B3}
\end{equation}
To specify the lengths of the confidence bands in (\ref{c3}), assume that $F^{(k)}$ satisfies the Lipschitz condition
\begin{equation}
	\omega_{2}\left(F^{(k)} ; h\right) \leq A h^{\beta},  \quad  0 <\beta \leq 2, \label{C3r}
\end{equation}
for some positive constant $A$. In such a case, we state the following result.
\begin{cor} Let $k \in \mathbb{N}$ and assume that $F^{(k)}$ satisfies (\ref{C3r}). Then,
\begin{equation}
	P\left(\left\|L_{k+1, h}^{(k)}\left(\hat{F}_{n} ; x\right)-F^{(k)}(x)\right\|>\frac{C_{k}(\alpha)}{n^{\beta /(2 \beta+2 k)}}\right) \leq \alpha, \label{D3}
\end{equation}
where
\begin{equation}
	C_{k}(\alpha)=\frac{t_{\alpha}}{2} D_{k}(\alpha)^{-k /(\beta+k)}+A e_{k+1} D_{k}(\alpha)^{\beta /(\beta+k)}, \quad D_{k}(\alpha)=\frac{k t_{\alpha}}{2 \beta A e_{k+1}}, \label{E3}
\end{equation}
and
\begin{equation}
	h=D_{k}(\alpha)^{1 /(\beta+k)} n^{-1 /(2 \beta+2 k)} . \label{F3}
\end{equation}\label{co32}
\end{cor}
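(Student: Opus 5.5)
The corollary follows from Theorem \ref{teban1} once we exhibit an admissible $b_k(h)$ and choose $h$ to balance the two terms. First, $F^{(k)}$ is uniformly continuous. Indeed, (\ref{C3r}) with $\beta>0$ gives $\omega_2(F^{(k)};h)\to 0$, and $F^{(k)}$ is bounded (it lies in ${\cal C}(\mathbb{R})$). A bounded function with vanishing second modulus is uniformly continuous, by the Marchaud-type inequality $\omega_1(f;h)\le C h\int_h^\infty \omega_2(f;t)t^{-2}\,dt$ with $\omega_2(f;t)\le 4\|f\|$. Alternatively, uniform continuity can be read off from the standing assumption $F\in{\cal C}^k(\mathbb{R})$ as the paper implicitly does.

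Next I take $b_k(h)=Ae_{k+1}h^\beta$. This is nondecreasing, continuous, and vanishes at $0$. By (\ref{C}), Proposition \ref{prbost} with $m=k+1$, the bound $c_{k+1,0}\le e_{k+1}$ from (\ref{cmk}) and (\ref{B3}), and (\ref{C3r}), it satisfies (\ref{b}). Theorem \ref{teban1} then gives, for every $h>0$,
\[
P\left(\left\|L_{k+1,h}^{(k)}(\hat F_n;x)-F^{(k)}(x)\right\|>g(h)\right)\le\alpha,\qquad g(h)=\frac{t_\alpha}{2h^k\sqrt n}+Ae_{k+1}h^\beta .
\]

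It remains to insert the $h$ of (\ref{F3}) and check that $g(h)=C_k(\alpha)n^{-\beta/(2\beta+2k)}$. This $h$ minimizes $g$: setting $g'(h)=0$ gives $h^{\beta+k}=\frac{kt_\alpha}{2\beta Ae_{k+1}\sqrt n}=D_k(\alpha)n^{-1/2}$, which is exactly (\ref{F3}). Substituting, the first term becomes
\[
\frac{t_\alpha}{2}D_k(\alpha)^{-k/(\beta+k)}n^{-1/2+k/(2\beta+2k)}=\frac{t_\alpha}{2}D_k(\alpha)^{-k/(\beta+k)}n^{-\beta/(2\beta+2k)},
\]
and the second term becomes $Ae_{k+1}D_k(\alpha)^{\beta/(\beta+k)}n^{-\beta/(2\beta+2k)}$. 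Their sum is $C_k(\alpha)n^{-\beta/(2\beta+2k)}$, as in (\ref{E3}), and this yields (\ref{D3}).

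The only possible obstacle is the uniform continuity hypothesis of Theorem \ref{teban1}, which I handle as described above. The rest is a direct substitution with exponent bookkeeping.
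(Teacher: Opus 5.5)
Your proof is correct and is the paper's argument: take $b_k(h)=Ae_{k+1}h^{\beta}$ via (\ref{B3}) and (\ref{C3r}), apply Theorem~\ref{teban1}, and evaluate $\frac{t_\alpha}{2h^k\sqrt n}+Ae_{k+1}h^\beta$ at its minimizer (\ref{F3}); your exponent bookkeeping checks out. Your explicit verification of uniform continuity (bounded $F^{(k)}$ with $\omega_2(F^{(k)};h)\to 0$, via the dyadic Marchaud bound) supplies a hypothesis the paper passes over silently, but drop the ``alternatively'' remark, since membership in ${\cal C}^k(\mathbb{R})$ only makes $F^{(k)}$ bounded and continuous, which does not imply uniform continuity.
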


\begin{proof} By (\ref{B3}) and (\ref{C3r}), we have from Theorem \ref{teban1}
\begin{equation}
	P\left(\left\|L_{k+1}^{(k)}\left(\hat{F}_{n} ; x\right)-{F}^{(k)}(x)\right\|>\frac{t_{\alpha}}{2 h^{k} \sqrt{n} }+A e_{k+1} h^{\beta}\right) \leq \alpha . \label{G3}
\end{equation}
The function
\begin{equation*}
	f(h)=\frac{t_{\alpha}}{2  h^{k} \sqrt{n}}+A e_{k+1} h^{\beta}, \quad h \geq 0,
\end{equation*}
attains its minimum at the value of $h$ given in (\ref{F3}). Thus, the result follows from (\ref{G3}) and some simple computations.
\end{proof}
The length of the confidence band in (\ref{D3}) strongly depends on the degree of smoothness of $F^{(k)}$, determined by the parameter $\beta$. On the other hand, it follows from (\ref{B3}) and (\ref{E3}) that $D_{k}(\alpha) \sim 1$ and $C_{k}(\alpha) \sim k$, as $k \rightarrow \infty$. Therefore, keeping constant the degree of smoothness of $F^{(k)}$, the length of the confidence band in (\ref{D3}) increases as $k$ increases.

Let $k=1$. Giné and Nickl \cite{ginico} obtained (asymptotic) globally adapted confidence bands in finite intervals for a probability density $\rho= F^{(1)}$ under the so called similarity conditions on  $\rho$. Patschkowski and Rohde \cite{parolo} extended the previous results to locally adaptive confidence bands for $\rho$. Roughly speaking, the length $l$ of the aforementioned bands is $l\sim(\log n)^{\gamma} n^{-\beta /(2 \beta+1)}$, for some $\gamma \geq \beta /(2 \beta+1)$, whereas that in Corollary \ref{co32} is $l\sim n^{-\beta /(2 \beta+2)}$.  For $k \in \mathbb{N}$, Schuster \cite{scesti} obtained similar results to Theorem \ref{teban1} for general kernel estimators of second order.  For the kernel estimators considered in this paper, Theorem \ref{teban1} improves such results.

It is worth mentioning the following special case.

\begin{cor} Let $k \in \mathbb{N}$ and $a, b \in \mathbb{R}$ with $a<b$. Assume that $F$ is a polynomial of degree $k+1$ at most on the interval $\left[a-(k+1) h_{0}, b+(k+1) h_{0}\right]$, for some $h_{0}>0$. Then,
\begin{equation*}
	P\left(\left\|L_{k+1, h_{0}}^{(k)}\left(\hat{F}_{n} ; x\right) - F^{(k)}(x)\right\|_{[a, b]}>\frac{t_{\alpha}}{2 h_{0}^{k} \sqrt{n}}\right) \leq \alpha .
\end{equation*} \label{co33}
\end{cor}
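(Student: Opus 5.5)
The plan is to rerun the proof of Theorem \ref{teban1}, this time with the supremum taken only over $[a,b]$ and with the bias term shown to vanish there. Fix $h=h_0$ and $x\in[a,b]$. By the basic identity (\ref{31v3}),
\[
L_{k+1,h_0}^{(k)}(\hat F_n;x)-F^{(k)}(x)=T_{k,n}+L_{k+1,h_0}(F^{(k)};x)-F^{(k)}(x).
\]
On the interval $[a-(k+1)h_0,\,b+(k+1)h_0]$ the function $F$ is a polynomial of degree at most $k+1$, so $F^{(k)}$ is affine there. In particular $F^{(k)}$ is continuous on that interval, and $(F^{(k)})_\star(x)=F^{(k)}(x)$.

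The first step is to show that the deterministic term vanishes on $[a,b]$. Since $|\widetilde S_{k+1}|\le k+1$, for $x\in[a,b]$ the point $x+h_0\widetilde S_{k+1}$ always lies in the interval where $F^{(k)}(y)=\alpha_0+\alpha_1 y$. Therefore
\[
L_{k+1,h_0}(F^{(k)};x)=\alpha_0+\alpha_1 x+\alpha_1 h_0\widetilde{\mathbb E}\widetilde S_{k+1}=F^{(k)}(x),
\]
because $\widetilde S_{k+1}$ is symmetric and so $\widetilde{\mathbb E}\widetilde S_{k+1}=0$. An alternative route uses (\ref{G}): $\omega_2(F^{(k)},[x-(k+1)h_0,x+(k+1)h_0];h_0)=0$ for affine functions, so the bias is zero. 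The direct computation is cleaner, and it also avoids needing global continuity of $F^{(k)}$.

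The second step handles the random term. As in (\ref{As}), the representation (\ref{dftkh}) together with (\ref{dfdire}) gives $|T_{k,n}|\le h_0^{-k}\|S_n(u)/n-u\|_{[0,1]}$ uniformly in $x$. The factor $\sum_j\binom{k}{j}=2^k$ cancels against $(2h_0)^k$. This bound uses only that $F$ takes values in $[0,1]$, so it needs no regularity of $F$ and holds for every $x\in[a,b]$.

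Hence $\|L_{k+1,h_0}^{(k)}(\hat F_n;x)-F^{(k)}(x)\|_{[a,b]}\le h_0^{-k}\|S_n(u)/n-u\|_{[0,1]}$, at least in law via (\ref{eqlaym}). Then (\ref{40}) with $\delta=t_\alpha/(2\sqrt n)$ and (\ref{41}) give the probability bound $2e^{-t_\alpha^2/2}=\alpha$.

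The only delicate point is measurability of the supremum over $[a,b]$. This follows because the process has continuous paths on $[a,b]$, by Proposition \ref{prderast} and the continuity of $F^{(k)}$ near $[a,b]$, so the supremum can be taken over rationals. The main obstacle is in fact minor: one must check that the bias vanishes exactly. This relies on the support bound $|\widetilde S_{k+1}|\le k+1$ matching the enlargement of the interval by $(k+1)h_0$.
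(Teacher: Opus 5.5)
Your proposal is correct and follows the paper's proof. The bias term in (\ref{31v3}) vanishes on $[a,b]$ because $L_{k+1,h_0}$ reproduces the affine function $F^{(k)}$ on $[a-(k+1)h_0,\,b+(k+1)h_0]$ (your support-and-symmetry computation makes this explicit), and then (\ref{As}) with the DKW inequality (\ref{40}) finishes the argument exactly as in Theorem \ref{teban1}, while your remarks on $(F^{(k)})_\star=F^{(k)}$ and on measurability of the supremum simply spell out points the paper leaves implicit.
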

\begin{proof} Since the operator $L_{k+1, h}$ preserves affine functions, we choose $h=h_{0}$ in (\ref{31v3}) to obtain
\begin{equation}
	L_{k+1, h_{0}}^{(k)}\left(\hat{F}_{n} ; x\right)-F^{(k)}(x)=T_{k, n}, \quad x \in[a, b] . \label{H3}
\end{equation}
With the choice $h=h_{0}$ in (\ref{dftkh}), we have from (\ref{dfdire}), (\ref{As}), and (\ref{H3})

\begin{equation*}
	\left\|L_{k+1, h_{0}}^{(k)}\left(\hat{F}_{n} ; x\right)- F^{(k)}(x)\right\|_{[a, b]} \leq \frac{1}{h_{0}^{k}}\left\|\frac{S_{n}(x)}{n}- x\right\|_{[0,1]} .
\end{equation*}
Therefore, the result follows as in the proof of Theorem \ref{teban1}.
\end{proof}
Observe that the length of the confidence band in $[a, b]$ in Corollary \ref{co33} is of order $n^{-1 / 2}$. For $k=1$, in the context of log-concave density estimation, an asymptotic result similar to Corollary \ref{co33} was obtained by Walther et al. \cite{walt}.
\section{Confidence intervals} \label{seinte}
In this section, $\left(\epsilon_{n}\right)_{n \geq 1}$ and $\left(\nu_{n}\right)_{n \geq 1}$ are arbitrary sequences of nonnegative real numbers converging to $0$, as $n \rightarrow \infty$. On the other hand, denote by $\sigma(y)=\sqrt{y(1-y)},\ 0\leq y\leq 1$. Finally, consider the nonnegative, increasing, and convex function
\begin{equation}
	\tau(\delta)=(1+\delta) \log (1+\delta)-\delta, \quad \delta \geq 0 . \label{a4}
\end{equation}
We start by giving confidence intervals for $F_{\star}$.
\begin{thm}Let $F \in {\cal C}_{S}(\mathbb{R})$ and assume that $\left(\sigma^{2} \circ F\right)_{\star}(x)>0$. Choose the bandwidth $h=h(x, n)$ in such a way that
\begin{equation}
	\left|L_{1, h}(F ; x)-F_{\star}(x)\right| \leq \sqrt{\frac{\epsilon_{n}}{n}}, \quad \left|L_{1, h}\left(\sigma^{2} \circ F ; x\right)-\left(\sigma^{2} \circ F\right)_{\star}(x)\right| \leq \nu_{n} . \label{b4}
\end{equation}
Then,
\begin{equation}
	P\left(\left|L_{1, h}\left(\hat{F}_{n} ; x\right)-F_{\star}(x)\right| \geq l_{0}(x,n)\right)\leq \alpha, \label{c4}
\end{equation}
where
\begin{equation}
l_{0}(x,n)=\left(\left(\sigma^{2} \circ F\right)_{\star}(x)+\nu_{n}\right) \tau^{-1}\left(\frac{\log (2 / \alpha)}{n\left(\left(\sigma^{2} \circ F\right)_{\star}(x)+\nu_{n}\right)}\right)+\sqrt{\frac{\epsilon_{n}}{n}} . \label{d4}
\end{equation}\label{te41}
\end{thm}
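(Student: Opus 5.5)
Split the error via identity \rf{decomp} with $L_h=L_{1,h}$ and $\mu=F$:
\[
L_{1,h}(\hat F_n;x)-F_\star(x)=L_{1,h}(\hat F_n-F;x)+L_{1,h}(F;x)-F_\star(x).
\]
By the first condition in \rf{b4}, the deterministic term is bounded by $\sqrt{\epsilon_n/n}$. It therefore suffices to show
\[
P\bigl(|L_{1,h}(\hat F_n-F;x)|\ge l_0(x,n)-\sqrt{\epsilon_n/n}\bigr)\le\alpha .
\]
Call the threshold $\lambda$, and set $v=(\sigma^2\circ F)_\star(x)+\nu_n$.

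The random term is an average of i.i.d.\ centered bounded variables. By \rf{pampiri}, $L_{1,h}(\hat F_n;x)=n^{-1}\sum_{j}Z_j$ with $Z_j=\widetilde{\mathbb{E}}1_{(-\infty,x+h\widetilde V]}(X_j)$. So $Z_j\in[0,1]$, $\mathbb{E}Z_j=L_{1,h}(F;x)$, and $|Z_j-\mathbb{E}Z_j|\le1$. The variance needs care. By Jensen (Schwarz) in the $\widetilde{\mathbb{E}}$ variable,
\[
\mathrm{Var}(Z_j)\le \widetilde{\mathbb{E}}\,\mathrm{Var}\bigl(1_{(-\infty,x+h\widetilde V]}(X)\bigr)=\widetilde{\mathbb{E}}\,\sigma^2(F(x+h\widetilde V))=L_{1,h}(\sigma^2\circ F;x).
\]
This is an instance of the subordination structure (the paper's Lemma A.1 presumably formalizes it). By the second condition in \rf{b4}, the right-hand side is at most $v$.

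Next apply Bennett's inequality in its Poissonian form to the centered sum with variance proxy $nv$ and increments bounded by $1$:
\[
P\Bigl(\Bigl|\sum_j (Z_j-\mathbb{E}Z_j)\Bigr|\ge n\lambda\Bigr)\le 2\exp\bigl(-nv\,\tau(\lambda/v)\bigr).
\]
Choosing $\lambda=v\,\tau^{-1}\bigl(\log(2/\alpha)/(nv)\bigr)$ makes the right-hand side equal to $\alpha$. This choice is well defined because $\tau$ is an increasing bijection of $[0,\infty)$, and $v>0$ by the assumption $(\sigma^2\circ F)_\star(x)>0$. It is exactly the first summand of $l_0$, so combining with the bias bound yields \rf{c4}.

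The main point to verify is that Bennett's bound is monotone in the variance proxy. The map $v\mapsto v\,\tau(\lambda/v)$ is nonincreasing in $v$, so replacing the true variance by the upper bound $v$ is legitimate. The bound must also be applied to each tail separately with the factor $2$. If preferred, I would instead derive the bound directly via the exponential moment of the empirical process represented through \rf{eqlaym}, subordinated by $\widetilde S_1$. The Bennett route is the first thing I would try.
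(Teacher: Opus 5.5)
Your proof is correct and gives exactly the paper's bound, but you reach the concentration step by a different route.

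The paper writes the random term as $T_{0,n}=n^{-1}\widetilde{\mathbb{E}}W_{0,n}(\widetilde S_1)$ and applies its Lemma~\ref{lea1}. That lemma uses Kwon's continuous H\"older inequality \rf{47a} to get $\mathbb{E}e^{t\widetilde{\mathbb{E}}W_{0,n}(\widetilde S_1)}\le\exp(\widetilde{\mathbb{E}}\log\mathbb{E}e^{tW_{0,n}(\widetilde S_1)})$. It then averages over $\theta$ a Bennett-type bound on $\mathbb{E}e^{\pm\beta W_{0,n}(\theta)}$, using $\mathbb{E}\times\widetilde{\mathbb{E}}Z_0^2(\widetilde S_1)=L_{1,h}(\sigma^2\circ F;x)\le c_0$, and finishes with a Chernoff optimization.

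You instead integrate out the smoothing variable first. Then $L_{1,h}(\hat F_n;x)$ is a mean of the i.i.d.\ $[0,1]$-valued variables $\widetilde F_1((x-X_j)/h)$. Jensen bounds their variance by the same quantity $L_{1,h}(\sigma^2\circ F;x)$, and classical Bennett gives the same exponent $nv\,\tau(\lambda/v)$. Your monotonicity remark about the variance proxy and the factor $2$ for the two tails are both handled correctly.

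Your route is more elementary and loses nothing. It shows that Lemma~\ref{lea1} is really Bennett's inequality applied to the i.i.d.\ summands $\widetilde{\mathbb{E}}Z_{(j)}(\widetilde R)$: Kwon's inequality only upper-bounds the exact m.g.f.\ of that sum. You could even replace $v$ by the exact variance computed in Theorem~\ref{pra3}, which would shorten the interval slightly.

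What the paper's formulation buys is a single lemma on subordinated processes, reused verbatim with $d=d_{k+1}$ in Theorems~\ref{te45} and~\ref{te48}. Your argument transfers to those cases just as easily, since $T_{k+1,n}$ is likewise a normalized sum of i.i.d.\ centered variables bounded by $d_{k+1}$.
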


\begin{rem} By (\ref{a4}), $ \tau(\delta) \sim \delta^{2} / 2$, as $\delta \rightarrow 0$, thus implying that $\tau^{-1}(\delta) \sim \sqrt{2 \delta}$, as $\delta\rightarrow 0$. This means that the length of the confidence interval in (\ref{d4}) satisfies \begin{equation}
l_{0}(x,n) \sim \frac{1}{\sqrt{n}}\left(t_{\alpha} \sqrt{\left(\sigma^{2} \circ F\right)_{\star}(x)+\nu_{n}}+\sqrt{\epsilon_{n}}\right) \sim \frac{t_{\alpha}}{\sqrt{n}}(\sigma \circ F)_{\star}(x), \quad n \rightarrow \infty, \label{e4}
\end{equation}
where $t_{\alpha}$ is defined in (\ref{41}). For the usual confidence level $1-\alpha=0.95$, we have $t_{\alpha}=2.716 \cdots$. \label{rem42} \end{rem}

From Proposition \ref{prbost} and Remark \ref{re14}, the first inequality in (\ref{b4}) is fulfilled if
\[
 \frac{1}{2}\ \omega_{2}\left(F_{x} ,[x-h, x+h] ; h\right) \leq \sqrt{\frac{\epsilon_{n}}{n}} .
\]
A similar statement holds for the second inequality in (\ref{b4}). Thus, we can determine the bandwidth $h$ by estimating the local second modulus of the functions under consideration. In this respect, recall the comments in Remark \ref{re15}.

On the other hand, suppose that $F \in {\cal C}(\mathbb{R})$. Since $F$ is uniformly continuous, the corresponding length of the confidence band is, asymptotically, $t_{\alpha} / (2\sqrt{n})$, as follows from Theorem \ref{teban1} and the comments following it.  Since $\left(\sigma^{2} \circ F\right)_{\star}(x)\leq\frac{1}{2}$, we see that the length $l_{0}(x,n)$ in (\ref{e4}) is asymptotically smaller than  $t_{\alpha} / (2\sqrt{n})$, specially for those values of $x$ in the tails of $F$. However, both lengths have the same order of magnitude.

In order to give confidence intervals when $\left(\sigma^{2} \circ F\right)_{\star}(x)=0$, we will give closed form expressions, as well as accurate estimates, for the mean standard error (MSE) of $L_{1, h}(\hat{F}_{n} ; x)$. In this respect, denote by $a \wedge b=\min (a, b)$ and $a \vee b=\max (a, b), a, b \in \mathbb{R}$.
\begin{thm}
	Let $\widetilde{R}_{1}$ be an independent copy of $\widetilde{S}_{1}$. Then,
	\begin{align}
		&MSE\left(L_{1, h}\left(\hat{F}_{n} ; x\right)\right) \nonumber\\
&=\frac{1}{n} \widetilde{\mathbb{E}} F\left(x+h\left(\widetilde{R}_{1} \wedge \widetilde{S}_{1}\right)\right)\left(1-F\left(x+h\left(\widetilde{R}_{1} \vee \widetilde{S}_{1}\right)\right)\right)
		+\left(L_{1, h}(F ; x)-F_{\star}(x)\right)^{2} \nonumber\\
& \leq \frac{1}{n}\left(\sigma^{2}\circ F\right)_ {\star}(x)+ \frac{1}{n} \left(L_{1, h}\left(\sigma^{2}\circ F ; x\right)-\left(\sigma^{2}\circ F\right)_ {\star}(x)\right)+\left(L_{1, h}(F;x)-F_{\star}(x)\right)^{2}.\label{64a}
	\end{align}\label{pra3}
\end{thm}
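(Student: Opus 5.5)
The proof has two parts: an exact bias–variance decomposition of the MSE, followed by a pointwise bound on the variance integrand.

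\textbf{Step 1 (decomposition).} I will start from the basic decomposition (\ref{decomp}) with $\mathbb{Y}_n=\hat{\mathbb{F}}_n$, $\mu=F$ and $\widetilde{W}=\widetilde{S}_1$:
\[
L_{1,h}(\hat F_n;x)-F_\star(x)=L_{1,h}(\hat F_n-F;x)+\bigl(L_{1,h}(F;x)-F_\star(x)\bigr).
\]
The first term has zero $\mathbb{E}$-expectation by Fubini and the second is deterministic, so the cross term vanishes. Hence
\[
MSE=\mathbb{E}\bigl(L_{1,h}(\hat F_n-F;x)\bigr)^2+\bigl(L_{1,h}(F;x)-F_\star(x)\bigr)^2 .
\]

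\textbf{Step 2 (variance term).} Let $\widetilde{R}_1$ be an independent copy of $\widetilde{S}_1$. Squaring an $\widetilde{\mathbb{E}}$-expectation turns it into a double expectation over the two copies, and Fubini then lets me move $\mathbb{E}$ inside:
\[
\mathbb{E}\bigl(L_{1,h}(\hat F_n-F;x)\bigr)^2=\widetilde{\mathbb{E}}\,\mathbb{E}\Bigl[(\hat F_n-F)(x+h\widetilde{S}_1)\,(\hat F_n-F)(x+h\widetilde{R}_1)\Bigr].
\]
For fixed $u,v$, the covariance of the empirical process is $\mathbb{E}(\hat F_n-F)(u)(\hat F_n-F)(v)=\frac1n F(u\wedge v)(1-F(u\vee v))$. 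This follows either from (\ref{pampiri}) directly or from (\ref{eqlaym}) using Bernoulli indicators. Since $F$ is nondecreasing and $h>0$, we have $F(u\wedge v)=F(x+h(\widetilde{R}_1\wedge\widetilde{S}_1))$, and similarly for $\vee$. This gives the stated equality.

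\textbf{Step 3 (inequality).} The claim reduces to
\[
\widetilde{\mathbb{E}}\,F(x+h(\widetilde R_1\wedge\widetilde S_1))\bigl(1-F(x+h(\widetilde R_1\vee\widetilde S_1))\bigr)\le L_{1,h}(\sigma^2\circ F;x).
\]
Writing $a=F(x+h(\widetilde R_1\wedge \widetilde S_1))$ and $b=F(x+h(\widetilde R_1\vee\widetilde S_1))$, we have $a\le b$ and need to bound $a(1-b)$. I will use the elementary inequality $a(1-b)\le \sqrt{a(1-a)}\sqrt{b(1-b)}$, valid for $0\le a\le b\le1$. To check it, square both sides: $a^2(1-b)^2\le ab(1-a)(1-b)$ is equivalent to $a(1-b)\le b(1-a)$, i.e. $a\le b$. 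By the AM–GM inequality, $\sqrt{a(1-a)}\sqrt{b(1-b)}\le\frac12\bigl(\sigma^2(a)+\sigma^2(b)\bigr)$. The pair $\{a,b\}$ is just $\{F(x+h\widetilde S_1),F(x+h\widetilde R_1)\}$ in some order, and $\widetilde R_1,\widetilde S_1$ are identically distributed. Taking expectations therefore gives exactly $\widetilde{\mathbb{E}}\,\sigma^2(F(x+h\widetilde S_1))=L_{1,h}(\sigma^2\circ F;x)$. Finally, I add and subtract $(\sigma^2\circ F)_\star(x)$ to obtain (\ref{64a}).

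The main obstacle is the pointwise inequality in Step 3. The natural first attempt, bounding by $\sigma^2(a)$ or $\sigma^2(b)$ alone, fails. The symmetrization through the geometric mean is the key point, and it requires the ordering $a\le b$, which comes from the monotonicity of $F$. The remaining steps are routine Fubini manipulations, justified because $\hat F_n$ and $F$ are bounded and jointly measurable.
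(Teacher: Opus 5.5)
Your Steps 1 and 2 coincide with the paper's proof: the bias--variance split via $T_{0,n}$, the covariance $\frac1n F(u\wedge v)(1-F(u\vee v))$, and Fubini with the independent copy $\widetilde{R}_1$. The routes differ only in Step 3, and your argument there is correct.

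The paper calls the inequality an immediate consequence of the increase of $F$. The natural reading, mirroring what it writes out in the proof of Theorem \ref{pra4}, is a sandwich. Since $\widetilde{R}_1\wedge\widetilde{S}_1\le\widetilde{S}_1\le\widetilde{R}_1\vee\widetilde{S}_1$, you have $a\le c\le b$ with $c=F(x+h\widetilde{S}_1)$. Hence $a(1-b)\le c(1-c)$ pointwise, and $\widetilde{\mathbb{E}}\,c(1-c)=L_{1,h}(\sigma^2\circ F;x)$ with no symmetrization at all. So your remark that bounding by a single $\sigma^2$ fails is too hasty. It fails only if you always take $\sigma^2(a)$, or always $\sigma^2(b)$; but $c$ is always one of $a,b$. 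The inequality even holds without monotonicity, by $\mathbb{E}(\widetilde{\mathbb{E}}\,\cdot)^2\le\mathbb{E}\times\widetilde{\mathbb{E}}(\cdot)^2$ together with (\ref{55a}), as in the proof of Theorem \ref{te413}.

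Your geometric-mean route costs an extra step, but it buys something if you stop before AM--GM. Since $\sigma(a)\sigma(b)=\sigma(F(x+h\widetilde{R}_1))\,\sigma(F(x+h\widetilde{S}_1))$, independence gives $\widetilde{\mathbb{E}}\,a(1-b)\le\bigl(L_{1,h}(\sigma\circ F;x)\bigr)^2$. By Jensen's inequality this never exceeds the paper's $L_{1,h}(\sigma^2\circ F;x)$, and it is strictly smaller unless $\sigma\circ F$ is a.e.\ constant on $[x-h,x+h]$.
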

If we choose the bandwidth $h$ as in (\ref{b4}) and $(\sigma^{2}\circ F)_ {\star}(x)=0$, then the right-hand side in (\ref{64a}) is very small. As a consequence, we obtain the following result.

\begin{thm} Let $F \in  {\cal C}_{S}(\mathbb{R})$ and assume that $\left(\sigma^{2} \circ F\right)_{\star}(x)=0$. Choose the bandwidth $h$ as in (\ref{b4}). Then,
\begin{equation}
	P\left(\left|L_{1, h}\left(\hat{F}_{n} ; x\right)-F_{\star}(x)\right| \geq \sqrt{\frac{\epsilon_{n}+\nu_{n}}{\alpha n}}\right) \leq \alpha . \label{f4}
\end{equation} \label{te43}
\end{thm}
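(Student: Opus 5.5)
The plan is to combine Chebyshev's (Markov's) inequality with the MSE bound \rf{64a} of Theorem \ref{pra3}. For any $\lambda>0$ Markov's inequality applied to the squared error gives
\[
P\left(\left|L_{1, h}\left(\hat{F}_{n} ; x\right)-F_{\star}(x)\right| \geq \lambda\right)\leq \frac{MSE\left(L_{1, h}\left(\hat{F}_{n} ; x\right)\right)}{\lambda^{2}}.
\]
It therefore suffices to show that $MSE(L_{1,h}(\hat{F}_n;x))\leq (\epsilon_n+\nu_n)/n$. Taking $\lambda=\sqrt{(\epsilon_n+\nu_n)/(\alpha n)}$ then makes the right-hand side at most $\alpha$, which is \rf{f4}. (If $\epsilon_n+\nu_n=0$, the same bound gives MSE $=0$, so the error vanishes almost surely and the statement is still fine, reading $\geq 0$ as the trivial event. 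I will check this degenerate case separately, or interpret it in the obvious way.)

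To bound the MSE, I would use the inequality in \rf{64a}. Its first term $\frac1n(\sigma^2\circ F)_\star(x)$ vanishes by hypothesis. The second term is $\frac1n\bigl(L_{1,h}(\sigma^2\circ F;x)-(\sigma^2\circ F)_\star(x)\bigr)$, and by the second condition in \rf{b4} it is at most $\nu_n/n$. The third term is $(L_{1,h}(F;x)-F_\star(x))^2$, and by the first condition in \rf{b4} it is at most $\epsilon_n/n$. Summing gives $MSE\leq(\epsilon_n+\nu_n)/n$, as required.

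The main point to verify is that Theorem \ref{pra3} applies as stated for $F\in{\cal C}_S(\mathbb{R})$ at a possible discontinuity point $x$. It does, since it is an earlier result, and the MSE there is taken relative to $F_\star(x)$, which is exactly the centering used in \rf{f4}. Beyond that, the argument is a direct chain: Markov's inequality, then \rf{64a}, then \rf{b4}. I expect no real obstacle.
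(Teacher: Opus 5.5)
Your proposal is correct and matches the paper's proof: the inequality in (\ref{64a}) together with (\ref{b4}) gives $MSE(L_{1,h}(\hat{F}_n;x))\leq (\epsilon_n+\nu_n)/n$, and Chebyshev's inequality then yields (\ref{f4}). One correction to your aside on the degenerate case: if $\epsilon_n+\nu_n=0$, the event $\{|L_{1,h}(\hat{F}_n;x)-F_{\star}(x)|\geq 0\}$ is the whole sample space and has probability $1>\alpha$, so the literal statement (and the paper's own proof) fails there rather than being ``still fine''; it needs either $\epsilon_n+\nu_n>0$ or a strict inequality.
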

The length of the confidence interval in (\ref{f4}) is as short as we wish, provided we choose an appropriately small bandwidth $h$. In this regard, suppose that $F \in {\cal C}(\mathbb{R})$ is concentrated on some finite interval $[a, b]$. Theorem \ref{te43} applies when $x=a$ or $x=b$. It is well known (cf. \cite{adalra,baappl,leones}) that Bernstein estimators match $F$ at $x=a$ and $x=b$. In comparison with (\ref{f4}), Bernstein estimators provide a better fit at the endpoints than kernel estimators.  However, this disadvantage can be overcomed with a small bandwidth.

Assume that $F \in {\cal C}^{2}(\mathbb{R})$ is concentrated on a finite interval $[a, b]$ and that $F^{(1)}(a+) \neq 0$ and $F^{(1)}(b -) \neq 0$. Zhang et al. \cite{zhesti} showed that, for the points in the intervals $[a, a+h)$ and $(b-h, b]$, the bias of the kernel distribution estimator converges to zero at the rate $O(h)$, which is slower than the usual rate $O\left(h^{2}\right)$ for distribution functions $F$ not concentrated on a finite interval $[a, b]$ (cf. Azzalini \cite{azanot} and Jones \cite{jothep}). This fact, however, has no influence in the length of the confidence intervals in Theorems \ref{te41} and \ref{te43}, since in both results we choose a very small bandwidth $h$.

An application to discrete random variables is given in the following result.
\begin{cor} Let $X$ be a random variable taking values in $S=\left\{x_{j}, \ j \in \mathbb{Z}\right\}$, as defined in (\ref{dfsets}). Assume that $\left(\sigma^{2} \circ F\right)_{\star}(x)>0$. Then,
\[
P\left(\left|L_{1, h}\left(\hat{F}_{n} ; x\right)-F_{\star}(x)\right| \geq\left(\sigma^{2} \circ F\right)_{\star}(x) \tau^{-1}\left(\frac{\log (2 / \alpha)}{n\left(\sigma^{2} \circ F\right)_{\star}(x)}\right)\right) \leq \alpha,
\]
whenever the bandwidth $h$ is selected as follows:
\begin{enumerate}[(i)]
	\item if $x=x_{j}$, for some $j \in \mathbb{Z}$, then $h\leq s$, and
\item if $x \in\left(x_{j}, x_{j +1}\right)$, for some $j \in \mathbb{Z}$, then $h\leq \min\left(x-x_{j}, x_{j+1}-x\right)$.
\end{enumerate} \label{co44}
\end{cor}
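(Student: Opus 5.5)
The plan is to obtain Corollary \ref{co44} as a special case of Theorem \ref{te41} with $\epsilon_n=\nu_n=0$ for all $n$. In that case the length in (\ref{d4}) is exactly the threshold in the statement. So it suffices to show that, under the bandwidth choices (i) or (ii), both inequalities in (\ref{b4}) hold with zero right-hand sides:
\[
L_{1,h}(F;x)=F_\star(x),\qquad L_{1,h}(\sigma^2\circ F;x)=(\sigma^2\circ F)_\star(x).
\]
First I note that $F\in {\cal C}_S(\mathbb{R})$. Indeed, $F$ is bounded and right-continuous with left limits everywhere. Since $X$ takes values in $S$, $F$ is constant on each open interval $(x_j,x_{j+1})$, and hence continuous off $S$. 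The same holds for $g=\sigma^2\circ F$, which is constant on each $(x_j,x_{j+1})$ and has one-sided limits at each $x_j$.

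The key step is a simple observation. If $g$ is piecewise constant with jumps only at points of $S$, then $L_{1,h}(g;x)=\frac12\int_{-1}^1 g(x+hv)\,dv$ equals $g_\star(x)$ whenever $(x-h,x)$ and $(x,x+h)$ each contain no point of $S$.

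In case (i), take $x=x_j$ and $h\le s$. By (\ref{dfsets}), $x_{j+1}-x_j\ge s\ge h$ and $x_j-x_{j-1}\ge s\ge h$. Hence $g(x+hv)=g(x_j+)$ for a.e. $v\in(0,1)$ and $g(x+hv)=g(x_j-)$ for $v\in(-1,0)$, which gives $L_{1,h}(g;x)=\frac12(g(x+)+g(x-))=g_\star(x)$.

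In case (ii), take $x\in(x_j,x_{j+1})$ and $h\le\min(x-x_j,x_{j+1}-x)$. Then $(x-h,x+h)\subseteq(x_j,x_{j+1})$, where $g$ is constant and equal to $g(x)=g_\star(x)$. The endpoints have Lebesgue measure zero, so $L_{1,h}(g;x)=g_\star(x)$.

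Applying this to $g=F$ and to $g=\sigma^2\circ F$ shows that (\ref{b4}) holds with $\epsilon_n=\nu_n=0$. Theorem \ref{te41} then yields the claim. The assumption $(\sigma^2\circ F)_\star(x)>0$ is exactly the hypothesis required by that theorem.

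The only delicate point is a formal one. Theorem \ref{te41} is stated for sequences tending to $0$, and the constant zero sequence is admissible. One also has to check that the bandwidth is allowed to be independent of $n$, which is harmless because (\ref{b4}) is required only for the chosen $h$. No real obstacle is expected.
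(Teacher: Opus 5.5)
Your proposal is correct and takes the same route as the paper. The paper simply notes that the bandwidth choices (i) and (ii) make (\ref{b4}) hold with $\epsilon_n=\nu_n=0$, and then invokes Theorem \ref{te41}; your check that $L_{1,h}(g;x)=g_\star(x)$ for $g=F$ and $g=\sigma^2\circ F$, because $g$ is constant on $(x-h,x)$ and on $(x,x+h)$, fills in the detail the paper leaves to the reader.
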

Note that if the bandwidth is selected as in (i) and (ii), then condition (\ref{b4}) is fulfilled with $\epsilon_{n}=\nu_{n}=0, \ n \geq 1$. Therefore, the result is an immediate consequence of Theorem \ref{te41}. Observe that the length of the confidence interval in Corollary \ref{co44} is asymptotically the same as that in (\ref{e4}). An illustration of this result when $X$ has the Poisson distribution will be given in Section \ref{sesimu}.

Assume that $F \in {\cal C}_{S}^{1}(\mathbb{R})$ and set $\rho=F^{(1)}$. To give confidence intervals for $\rho_{\star}$. we start with  (\ref{dftkh}) and (\ref{31v3}) for $k=1$, that is,
\begin{equation}
	L_{2, h}^{(1)}\left(\hat{F}_{n} ; x\right)-\rho_{\star}(x)=T_{1, n}+L_{2, h}(\rho ; x)-\rho_{\star}(x) . \label{g4}
\end{equation}
\begin{thm} Let $r \geq 0$. Assume that $\rho \in {\cal C}_{S}(\mathbb{R})$ and that $\rho_{\star}(x)>0$. Choose the bandwidth $h=h(x, n)$ in such a way that
\begin{equation}
	\left|L_{2, h}(\rho ; x)-\rho_{\star}(x)\right| \leq \sqrt{\frac{r}{2 nh}} . \label{h4}
\end{equation}
Then,
\begin{equation}
	P\left(\left|L_{2, h}^{(1)}\left(\hat{F}_{n} ; x\right)-\rho_{\star}(x)\right| \geq l_{1}(x,n)\right) \leq \alpha, \label{i4}
\end{equation}
where

\begin{equation}
	l_{1}(x,n)=\left(\rho_{\star}(x)+\sqrt{\frac{r}{2 n h}}\right) \tau^{-1}\left(\frac{\log (2 / \alpha)}{2 n h\left(\rho_{\star}(x)+\sqrt{r/(2 n h)}\right)}\right)+ \sqrt{\frac{r}{2 n h}} . \label{j4}
\end{equation}\label{te45}
\end{thm}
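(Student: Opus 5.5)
The plan is to avoid working with $T_{1,n}$ through the Dvoretzky--Kiefer--Wolfowitz inequality, since that only gives the rate $t_\alpha/(2h\sqrt n)$. Instead I would write $L_{2,h}^{(1)}(\hat F_n;x)$ as an average of i.i.d.\ bounded nonnegative variables and apply a Bennett-type inequality; this is what produces the function $\tau$ in (\ref{j4}). By Proposition \ref{prderast} with $m=2$, $k=1$,
\[
L_{2,h}^{(1)}(\hat F_n;x)=\frac1n\sum_{j=1}^n Z_j,\qquad Z_j=\frac1h\,\widetilde F_2^{(1)}\!\left(\frac{x-X_j}{h}\right).
\]
Here $\widetilde F_2^{(1)}$ is the triangular density of $\widetilde S_2$ on $[-2,2]$, whose maximum is $1/2$ (Proposition \ref{prdesu}). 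Hence $0\le Z_j\le 1/(2h)$. Moreover, by (\ref{g4}) and Fubini, $\mathbb E Z_j=L_{2,h}(\rho;x)=:\mu$, so the centred sum $\frac1n\sum_j (Z_j-\mu)$ is exactly $T_{1,n}$.

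First I would bound the variance. Since $0\le Z_j\le 1/(2h)$, we have $\mathrm{Var}(Z_j)\le \mathbb E Z_j^2\le \mu/(2h)=:v$. Also $|Z_j-\mu|\le b:=1/(2h)$, because both $Z_j$ and $\mu$ lie in $[0,1/(2h)]$. Bennett's inequality, applied to $Z_j-\mu$ and to $\mu-Z_j$, then gives for $t>0$
\[
P\left(|T_{1,n}|\ge t\right)\le 2\exp\!\left(-\frac{nv}{b^2}\,\tau\!\left(\frac{bt}{v}\right)\right)=2\exp\!\left(-2nh\,\mu\,\tau(t/\mu)\right).
\]
The paper's Lemma \ref{lea1} (concentration for subordinated processes) is presumably a version of this inequality, and I would invoke it if it is stated in this form.

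Second, I would replace the unknown $\mu$ by the deterministic upper bound $\mu':=\rho_\star(x)+\sqrt{r/(2nh)}$. By (\ref{h4}), $\mu\le\mu'$. The map $v\mapsto v\,\tau(t/v)$ is nonincreasing, because with $u=t/v$ its derivative is $\tau(u)-u\tau'(u)=\log(1+u)-u\le 0$. Consequently the bound above remains valid with $\mu$ replaced by $\mu'$.

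Now choose $t=\mu'\,\tau^{-1}\bigl(\log(2/\alpha)/(2nh\mu')\bigr)$. With this choice the right-hand side equals $\alpha$. Finally, from (\ref{g4}) and (\ref{h4}),
\[
\left|L_{2,h}^{(1)}(\hat F_n;x)-\rho_\star(x)\right|\le |T_{1,n}|+\sqrt{\frac{r}{2nh}},
\]
so the event in (\ref{i4}) is contained in $\{|T_{1,n}|\ge t\}$, which yields (\ref{i4}) with $l_1(x,n)$ as in (\ref{j4}).

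The main obstacle is getting the constants in the Bennett step exactly right. This means checking that the variance proxy $\mu/(2h)$ and the range bound $1/(2h)$ combine to give precisely $2nh\,\mu\,\tau(t/\mu)$, and that a two-sided bound with the same range $b$ is legitimate. The second point holds because $\mu\le 1/(2h)$. The monotonicity step that replaces $\mu$ by $\mu'$ is the other point needing care, but it reduces to the elementary inequality $\log(1+u)\le u$.
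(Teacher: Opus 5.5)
Your proof is correct, and it yields exactly the paper's $l_1(x,n)$. The key concentration step, however, goes by a different route.

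\textbf{The paper's route.} The paper keeps $T_{1,n}$ in subordinated form, $T_{1,n}=\frac{1}{2nh}\widetilde{\mathbb E}W_{1,n}(\widetilde S_1)$, and applies Lemma \ref{lea1}. The proof of that lemma uses Kwon's continuous H\"older inequality (\ref{47a}) to reduce to pointwise Bennett-type moment generating function bounds for $W_{1,n}(\theta)$. The two inputs are:
\begin{enumerate}[(i)]
\item the pointwise range bound $d_1=1$ from (\ref{56a});
\item the averaged pointwise variance $\mathbb E\times\widetilde{\mathbb E}Z_1^2(\widetilde S_1)\le 2hL_{2,h}(\rho;x)\le 2h\bigl(\rho_\star(x)+\sqrt{r/(2nh)}\bigr)=c_1$, from (\ref{57a}) and (\ref{h4}).
\end{enumerate}

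\textbf{Your route.} You average over $\widetilde S_1$ first, via the kernel form in Proposition \ref{prderast}. Then $T_{1,n}$ is a plain i.i.d.\ mean of variables in $[0,1/(2h)]$, and the variance bound comes for free from $\mathbb EZ_j^2\le \frac{1}{2h}\mathbb EZ_j$. In the paper's notation, $2h(Z_j-\mu)=\widetilde{\mathbb E}Z_{1,j}(\widetilde S_1)$. So after rescaling, your range $1/(2h)$ and variance proxy $\mu/(2h)$ are exactly the paper's $d_1$ and $c_1$, which is why the constants coincide.

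\textbf{What each buys.} Your route is more elementary: it needs neither Kwon's inequality nor the computation behind (\ref{57a}). It also shows that Lemma \ref{lea1} itself follows from the classical Bennett inequality. Indeed, the variables $\widetilde{\mathbb E}Z_{(j)}(\widetilde R)$, $j=1,\dots,n$, are i.i.d.\ and centred (by Fubini), bounded by $d$, and have second moment at most $\mathbb E\times\widetilde{\mathbb E}Z^2(\widetilde R)$ by Jensen. The paper's formulation pays off as one tool reused in Theorems \ref{te41} and \ref{te48}. For $k\ge 1$ the kernel $\widetilde F_{k+2}^{(k+1)}$ is signed, and there the range $d_{k+1}$ and the variance $2h\binom{2k}{k}D_{k+2,k,h}(\rho;x)$ are read off from the difference structure of $Z_{k+1}(\theta)$ rather than from a sup-norm of the kernel.

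\textbf{A small simplification.} Your monotonicity step replacing $\mu$ by $\rho_\star(x)+\sqrt{r/(2nh)}$ is valid but unnecessary. Bennett's inequality holds with any upper bound on the variance, so you can take $v=\bigl(\rho_\star(x)+\sqrt{r/(2nh)}\bigr)/(2h)$ directly. This mirrors the freedom in Lemma \ref{lea1} to take any $c\ge d^{-2}\mathbb E\times\widetilde{\mathbb E}Z^2(\widetilde R)$.
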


\begin{rem} Using the same arguments as in Remark \ref{rem42}, we have
\begin{equation}
	l_{1}(x,n) \sim \frac{1}{\sqrt{2 n h}}\left(t_{\alpha} \sqrt{\rho_{\star}(x)}+\sqrt{r}\right), \quad n h \rightarrow \infty . \label{k4}
\end{equation}\label{rem92}\end{rem}

Let $x \in[a, b]$.  Suppose that $\rho$ is affine on $[a-2h_0, b+2h_0]$, for some $h_0>0$.  Choose $h=h_0$ in Theorem \ref{te45}. Since the operator $L_{2, h_0}$ preserves affine functions, inequality (\ref{h4}) is fulfilled for $r=0$. Therefore, the length in (\ref{j4}) takes on the form
\begin{equation}
	l_{1}(x,n)=\rho_{\star}(x) \tau^{-1}\left(\frac{\log (2 / \alpha)}{2 n h_{0} \rho_{\star}(x)}\right) \sim t_{\alpha} \sqrt{\frac{\rho_{\star}(x)}{2 h_{0}}} \frac{1}{\sqrt{n}}, \quad n \rightarrow \infty . \label{l4}
\end{equation}
In other words, the order of magnitude of the length in (\ref{l4}) is $1 / \sqrt{n}$.

Under the assumptions that $\rho$ has at least four continuous derivatives in a neighborhood of $x$ and $\rho(x)>0$, Hall \cite{hall} gave an asymptotic confidence interval for $\rho(x)$ whose length is of order $(nh)^{-1/2}$, stating that undersmoothing is preferable to explicit bias correction (see also Calonico et al. \cite{calo} for a comparison between the undersmoothing and the robust bias correction methods). Note that, in the setting of Theorem \ref{te45}, undersmoothing is achieved by choosing a small enough parameter $r$ in inequality (\ref{h4}).

Under a Lipschitz condition on the bias of the estimator at hand, we give the following specific result.

\begin{cor} Assume that $\rho \in {\cal C}_{S}(\mathbb{R})$, $\rho_{\star}(x)>0$, $H(x)>0$, and
\begin{equation}
|L_{2,h}(\rho;x)-\rho_{\star}(x)|\leq A(x)h^{\beta}, \quad 0<\beta\leq 2,\quad 0<h\leq H(x), \label{z1}
\end{equation}
for some $A(x)>0$. Whenever $h_0\leq H(x)$, we have
\begin{equation}
P\left(|L_{2,h_0}^{(1)}(\hat{F}_{n};x)-\rho_{\star}(x)|\geq l_1(x,n)\right)\leq \alpha, \label{z2}
\end{equation}
where
\begin{equation}
l_1(x,n) \sim \frac{2\beta+1}{2\beta}t_{\alpha}\sqrt{\rho_{\star}(x)}\frac{1}{\sqrt{2nh_0}}, \label{z3}
\end{equation}
and
\begin{equation}
h_0=\left(\frac{t_{\alpha}^2\rho_{\star}(x)}{8\beta^2A^2(x)n}\right)^{1/(2\beta+1)}. \label{z4}
\end{equation}\label{cor99}
\end{cor}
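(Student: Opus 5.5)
The plan is to deduce Corollary \ref{cor99} directly from Theorem \ref{te45} and Remark \ref{rem92}. The only task is to choose the parameter $r$ so that hypothesis (\ref{h4}) follows from the Lipschitz bias bound (\ref{z1}), and then to optimize over $h$.

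\textbf{Step 1: choice of $r$.} Fix $h_0\le H(x)$. By (\ref{z1}), the bias is at most $A(x)h_0^{\beta}$. Condition (\ref{h4}) therefore holds once we set
\[
\sqrt{\frac{r}{2nh_0}}=A(x)h_0^{\beta},\qquad\text{that is,}\qquad r=r(x,n)=2nh_0^{2\beta+1}A^2(x).
\]
Theorem \ref{te45} then gives (\ref{z2}) at once, with $l_1(x,n)$ given by (\ref{j4}) for this $r$. The theorem holds for every $r\ge 0$, so it does no harm that $r$ depends on $n$ through $h_0$.

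\textbf{Step 2: choice of $h_0$ and the asymptotics.} By (\ref{z4}), $nh_0^{2\beta+1}=t_\alpha^2\rho_\star(x)/(8\beta^2A^2(x))$, so $r=t_\alpha^2\rho_\star(x)/(4\beta^2)$ is a constant independent of $n$. We also have $nh_0\sim n^{2\beta/(2\beta+1)}\to\infty$. Remark \ref{rem92} then yields
\[
l_1(x,n)\sim\frac{1}{\sqrt{2nh_0}}\Bigl(t_\alpha\sqrt{\rho_\star(x)}+\frac{t_\alpha\sqrt{\rho_\star(x)}}{2\beta}\Bigr)=\frac{2\beta+1}{2\beta}\,t_\alpha\sqrt{\rho_\star(x)}\,\frac{1}{\sqrt{2nh_0}},
\]
which is (\ref{z3}).

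It is worth noting, as motivation, that $h_0$ is the minimizer of the asymptotic length $t_\alpha\sqrt{\rho_\star(x)}/\sqrt{2nh}+A(x)h^{\beta}$. Setting the derivative to zero gives $\beta A h^{\beta-1}=t_\alpha\sqrt{\rho_\star}(2n)^{-1/2}h^{-3/2}/2$, hence $h^{2\beta+1}=t_\alpha^2\rho_\star/(8\beta^2A^2n)$. This calculation is only a remark; the proof does not require it.

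\textbf{Main obstacle.} The delicate point is the asymptotic step. We must check that (\ref{k4}) is applicable: the argument of $\tau^{-1}$ must tend to $0$, and the extra term $\sqrt{r/(2nh_0)}$ inside the prefactor must be negligible against $\rho_\star(x)>0$. Both follow because $nh_0\to\infty$ while $r$ stays fixed, using $\tau^{-1}(\delta)\sim\sqrt{2\delta}$. Beyond that, the proof reduces to bookkeeping of constants.
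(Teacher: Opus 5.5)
Your proposal is correct and follows essentially the paper's argument. The paper likewise equates the bias bound $A(x)h^{\beta}$ with $\sqrt{r/(2nh)}$, applies Theorem~\ref{te45} and Remark~\ref{rem92}, and arrives at the constant $r_0(x)=t_\alpha^2\rho_\star(x)/(4\beta^2)$. The only difference is direction: the paper parametrizes $h$ by $r$ and minimizes the asymptotic length over $r$ to obtain $h_0$, whereas you take $h_0$ from (\ref{z4}) and solve for $r$, treating the equivalent optimization as a side remark.
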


Observe that (\ref{z3}) and (\ref{z4}) give us
\begin{equation}
h_0 \sim n^{-1/(2\beta+1)},\quad and \quad l_1(x,n) \sim \left(\frac{\rho_{\star}(x)}{n}\right)^{\beta/(2\beta+1)}. \label{z44}
\end{equation}
On the other hand, condition (\ref{z1}) is fulfilled if
\begin{equation}
\frac{7}{8}\omega_2(\rho_x, [x-2h,x+2h])\leq A(x)h^{\beta},\quad 0<\beta\leq 2,\quad 0<h\leq H(x).\label{z5}
\end{equation}
This follows from (\ref{D2}), (\ref{G}), and Remark \ref{re14}.

In the case that $x$ is close to a discontinuity point of $\rho$, inequality (\ref{z1}) is fulfilled only for a small $H(x)$ and therefore we need a very large sample size $n$ to guarantee (\ref{z2}). This is illustrated in Section \ref{sesimu} when $\rho$ is the Pareto density and $x>0$ is close to the origin.

Finally, in order to compare Corollaries \ref{co32} and \ref{cor99}, suppose that $\rho$ is uniformly continuous and satisfies the Lipschitz condition in (\ref{C3r}). Then, the length of the confidence bands and intervals for $\rho$ have the orders of magnitude $n^{-\beta /(2 \beta+2)}$ and $n^{-\beta /(2 \beta+1)}$, respectively. This follows from Corollary \ref{co32} and (\ref{z44}).  In particular, if $\beta=2$, such orders become $n^{-1/3}$ and $n^{-2/5}$, respectively.

The MSE of kernel density estimators are usually evaluated under the assumption that $\rho$ has at least two continuous derivatives (see, for instance Tsybakov \cite{tsyba} or Scott \cite{scott}). In the following result, we compute the MSE of $L_{2, h}^{(1)}(\hat{F}_{n} ; x)$ only assuming that $\rho\in {\cal C}_{S}(\mathbb{R})$.

 \begin{thm} Let $F \in  {\cal C}_{S}^{1}(\mathbb{R})$ and let $\widetilde{R}_{1}$ be an independent copy of $\widetilde{S}_{1}$. Then,
	\begin{align}
		&MSE\left(L_{2, h}^{(1)}\left(\hat{F}_{n} ; x\right)\right) \nonumber \\
		&=\frac{1}{4 n h^{2}}\left\{\widetilde{\mathbb{E}}\left(F\left(x+h\left(\widetilde{R}_{1} \wedge \widetilde{S}_{1}\right)+h\right)-F\left(x+h\left(\widetilde{R}_{1} \vee \widetilde{S}_{1}\right)-h\right)\right)\right.\nonumber\\ \phantom{\frac{1}{4 n h^{2}}}&\left. -\left(\widetilde{\mathbb{E}}\left(F\left(x+h \widetilde{S}_{1}+h\right)-F\left(x+h \widetilde{S}_{1} - h\right)\right)\right)^{2}\right\} +\left(L_{2, h}(\rho ; x)-\rho_{\star}(x)\right)^{2}  \nonumber\\
		&\leq \frac{\rho_{\star}(x)}{2 n h}+\frac{1}{2 n h} \left(L_{2, h}(\rho ; x)-\rho_{\star}(x)\right)+\left(L_{2, h}(\rho ; x)-\rho_{\star}(x)\right)^{2}. \label{67a}
	\end{align} \label{pra4}
\end{thm}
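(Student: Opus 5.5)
We split the MSE into variance plus squared bias. By (\ref{g4}), the error equals $T_{1,n}$ plus the deterministic bias $L_{2,h}(\rho;x)-\rho_{\star}(x)$. Since $T_{1,n}$ has zero $\mathbb{E}$-expectation (Fubini), we get $MSE=\mathbb{E}T_{1,n}^2+(L_{2,h}(\rho;x)-\rho_\star(x))^2$. Hence only $\mathbb{E}T_{1,n}^2$ needs computing.

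\textbf{The variance term.} By (\ref{dftkh}) with $k=1$,
\[
T_{1,n}=\frac{1}{2h}\widetilde{\mathbb{E}}\left(\hat F_n(x+h\widetilde S_1+h)-\hat F_n(x+h\widetilde S_1-h)-F(x+h\widetilde S_1+h)+F(x+h\widetilde S_1-h)\right).
\]
Write $\hat F_n(b)-\hat F_n(a)=\frac1n\sum_j 1_{(a,b]}(X_j)$. Then $T_{1,n}$ is an average of $n$ i.i.d. centered terms $\frac{1}{2h}(Z_j-\mathbb{E}Z_j)$, where
\[
Z_j=\widetilde{\mathbb{E}}\,1_{(x+h\widetilde S_1-h,\;x+h\widetilde S_1+h]}(X_j).
\]
So $\mathbb{E}T_{1,n}^2=\frac{1}{4nh^2}\left(\mathbb{E}Z_1^2-(\mathbb{E}Z_1)^2\right)$. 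Here $\mathbb{E}Z_1=\widetilde{\mathbb{E}}\left(F(x+h\widetilde S_1+h)-F(x+h\widetilde S_1-h)\right)$, which gives the squared term in the statement.

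For $\mathbb{E}Z_1^2$, introduce the independent copy $\widetilde R_1$ and use Fubini:
\[
\mathbb{E}Z_1^2=\widetilde{\mathbb{E}}\,P\left(X\in(x+h\widetilde S_1-h,x+h\widetilde S_1+h]\cap(x+h\widetilde R_1-h,x+h\widetilde R_1+h]\right).
\]
Since $|\widetilde S_1-\widetilde R_1|\le2$, the two intervals of length $2h$ always overlap. Their intersection is $(x+h(\widetilde R_1\vee\widetilde S_1)-h,\;x+h(\widetilde R_1\wedge\widetilde S_1)+h]$, so its probability is the $F$-difference in the statement. This gives the equality. It is the same mechanism as in Theorem \ref{pra3}, where $\mathbb{E}1_{(-\infty,a]}(X)1_{(-\infty,b]}(X)=F(a\wedge b)$.

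\textbf{The inequality.} Drop the negative squared term. Write the intersection probability as $\int \rho(u)\,du$ over the overlap interval, whose length is $2h-h|\widetilde R_1-\widetilde S_1|$. After substituting $u=x+hv$, this gives
\[
\mathbb{E}Z_1^2=h\int\rho(x+hv)\,\widetilde{\mathbb{E}}\,1_{\{|v-\widetilde S_1|<1,\ |v-\widetilde R_1|<1\}}\,dv=h\int\rho(x+hv)\,g(v)^2\,dv,
\]
where $g(v)=P(|v-\widetilde S_1|<1)=(2-|v|)_+/2$. Note that $g/2$ is the density of $\widetilde S_2$.

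\textbf{Main obstacle.} The remaining step is to bound $h\int\rho(x+hv)g(v)^2dv$ by $2h\,\widetilde{\mathbb{E}}\rho(x+h\widetilde S_2)=2h\,L_{2,h}(\rho;x)$. Since $\rho\ge0$, this reduces to the pointwise inequality $g^2\le 2\cdot(g/2)=g$, which holds because $0\le g\le1$. Hence $\mathbb{E}Z_1^2\le 2hL_{2,h}(\rho;x)$, and
\[
\mathbb{E}T_{1,n}^2\le\frac{L_{2,h}(\rho;x)}{2nh}=\frac{\rho_\star(x)}{2nh}+\frac{1}{2nh}\left(L_{2,h}(\rho;x)-\rho_\star(x)\right).
\]
Adding the squared bias yields (\ref{67a}). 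I expect the computation of $g$ and this comparison to be the only delicate part, so I would double-check the constant $2$ in the density of $\widetilde S_2$.
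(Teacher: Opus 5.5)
Your proposal is correct and follows essentially the paper's argument: you split the MSE into variance plus squared bias via (\ref{g4}), compute the variance with an independent copy $\widetilde R_1$ and Fubini to get the probability of the overlap of the two intervals, and then drop the negative squared term. The only difference is in the inequality. The paper does not compute $g$; it uses monotonicity of $F$, since the overlap is contained in $(x+h\widetilde S_1-h,\,x+h\widetilde S_1+h]$, together with $\widetilde{\mathbb{E}}\bigl(F(x+h\widetilde S_1+h)-F(x+h\widetilde S_1-h)\bigr)=2hL_{2,h}(\rho;x)$, which is exactly your pointwise bound $g^2\le g$ (and your constant is right: $\widetilde S_2$ has density $(2-|v|)_+/4=g(v)/2$).
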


If $\rho\in{\cal C}_{S}(\mathbb{R})$, Cline and Hart \cite{clhake} obtained asymptotic expansions for the MSE  of kernel estimators of order $m$, also providing a test for the null hypothesis that $\rho$ is continuous at $x_0$ (in this respect, see also Funke and Hirukawa \cite{fuhike}).

If $\rho_{\star}(x)=0$, the upper bound in (\ref{67a}) is very small and therefore the corresponding confidence interval is asymptotically shorter than that in Theorem \ref{te45}, as shown in the following result. To this end, denote
\begin{equation}
	g_{\beta}(h)=\frac{h^{\beta-1}}{2 n}+ A(x) h^{2 \beta},\quad 0<\beta \leq 2, \quad 0<h\leq H(x),\label{n4}
\end{equation}
where $A(x)$ and $H(x)$ are the same as in (\ref{z5}).
The next result provides confidence intervals in the case  $\rho_{\star}(x)=0$, which entails that $\rho(x)=0$.
\begin{thm} Assume that $\rho \in {\cal C}_{S}(\mathbb{R})$, $\rho_{\star}(x)=0$, and $H(x)>0$. If the function $\rho_{x}$ satisfies (\ref{z5}), then
	\begin{enumerate}[(a)]
\item		If $\beta\geq 1$, then we have for any $0<h\leq H(x)$
\begin{equation}
	P\left(L_{2, h}^{(1)}\left(\hat{F}_{n} ; x\right) \geq \frac{1}{\sqrt{\alpha}} \sqrt{A(x) g_{\beta}(h)}\right) \leq \alpha . \label{o4}
\end{equation}
\item If $0<\beta<1$, then
	\begin{equation}
	P\left(L_{2, h_\beta}^{(1)}\left(\hat{F}_{n} ; x\right) \geq \frac{1}{\sqrt{\alpha}} \sqrt{A(x) g_{\beta}\left(h_{\beta}\right)}\right) \leq\alpha , \label{p4}
\end{equation}
where
\begin{equation}
	h_{\beta}=\left(\frac{1-\beta}{4 \beta A(x)n}\right)^{1 /(\beta+1)}, \label{q4}
\end{equation}
provided that $h_{\beta}\leq H(x)$.
\end{enumerate}\label{te47}
\end{thm}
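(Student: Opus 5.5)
The proof will combine Chebyshev's (Markov's) inequality with the MSE bound (\ref{67a}) of Theorem \ref{pra4}. Since $\rho_{\star}(x)=0$, we have
\[
P\left(L_{2,h}^{(1)}(\hat F_n;x)\ge \frac{1}{\sqrt\alpha}\sqrt{A(x)g_\beta(h)}\right)\le P\left(\left|L_{2,h}^{(1)}(\hat F_n;x)-\rho_{\star}(x)\right|\ge \frac{1}{\sqrt\alpha}\sqrt{A(x)g_\beta(h)}\right),
\]
and by Markov's inequality applied to the square this is at most $MSE(L_{2,h}^{(1)}(\hat F_n;x))/(A(x)g_\beta(h))\cdot\alpha$. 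So it suffices to show
\[
MSE\left(L_{2,h}^{(1)}(\hat F_n;x)\right)\le A(x)\,g_\beta(h)
\]
at the relevant bandwidth.

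First I bound the bias. By (\ref{G}) with $m=2$, $k=0$, together with Remark \ref{re14} and (\ref{z5}), we get $|L_{2,h}(\rho;x)-\rho_{\star}(x)|\le A(x)h^\beta$ for $0<h\le H(x)$. Plugging this and $\rho_\star(x)=0$ into (\ref{67a}) gives
\[
MSE\le \frac{A(x)h^\beta}{2nh}+A(x)^2h^{2\beta}=A(x)\left(\frac{h^{\beta-1}}{2n}+A(x)h^{2\beta}\right)=A(x)g_\beta(h),
\]
exactly matching (\ref{n4}). This yields (\ref{o4}) for every $0<h\le H(x)$, which is part (a). Nothing in this computation uses $\beta\ge1$; the hypothesis only indicates that $g_\beta$ is then increasing in $h$, so any small $h$ is admissible and no optimization is needed.

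For part (b), with $0<\beta<1$, the same bound holds at $h=h_\beta$ provided $h_\beta\le H(x)$, which is assumed. This gives (\ref{p4}). The point of the specific choice (\ref{q4}) is that $g_\beta$ now blows up as $h\to0$. Differentiating, $g_\beta'(h)=\frac{(\beta-1)h^{\beta-2}}{2n}+2\beta A(x)h^{2\beta-1}=0$, which gives $h^{\beta+1}=\frac{1-\beta}{4\beta A(x)n}$, i.e. $h_\beta$ is the minimizer. I would note this for the reader, but it is not logically needed for the inequality.

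The main point requiring care is the sign of the bias term in (\ref{67a}). It appears linearly as $\frac{1}{2nh}(L_{2,h}(\rho;x)-\rho_\star(x))$, so I need to bound it by its absolute value. This is legitimate because (\ref{67a}) is an upper bound and replacing the bias by $|{\cdot}|\le A(x)h^\beta$ only enlarges it. In fact $L_{2,h}(\rho;x)\ge0=\rho_\star(x)$ here, since $\rho\ge0$.
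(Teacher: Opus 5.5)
Your proof is correct and follows the paper's own argument. The paper derives the same bias bound $L_{2,h}(\rho;x)\le \frac{7}{8}\,\omega_2(\rho_x,[x-2h,x+2h];h)\le A(x)h^{\beta}$ from (\ref{G}), Remark \ref{re14} and (\ref{z5}), inserts it into (\ref{67a}) of Theorem \ref{pra4} to get $MSE\le A(x)g_\beta(h)$, and concludes both parts by Chebyshev's inequality, using the minimization of $g_\beta$ at $h_\beta$ only to justify the bandwidth choice in (b), exactly as you observe.
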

From (\ref{n4}), we have the following. If $1<\beta \leq 2$, the length of the confidence interval in (\ref{o4}) is as short as we wish, whenever we choose an arbitrary small $h>0$. If $\beta=1$, such a length is of order $1 / \sqrt{n}$. On the other hand, if $0<\beta<1$, it follows from $(\ref{n4})$ and (\ref{q4}) that the length of the confidence interval in $(\ref{p4})$ is of order $n^{-\beta /(\beta+1)}$.

 Let $k \in \mathbb{N}$. We finally consider the problem of giving confidence intervals for the $k$th derivative of $\rho$. Rewrite (\ref{31v3}) in terms of $\rho$ as
\begin{equation}
	L_{k+2, h}^{(k+1)}\left(\hat{F}_{n} ; x\right)-(\rho^{(k)})_{\star}(x)=T_{k+1, n}+L_{k+2, h}\left(\rho^{(k)} ; x\right)-(\rho^{(k)})_{\star}(x) . \label{r4}
\end{equation}
Denote
\begin{equation}
	d_{k+1}=\binom{k}{\lfloor(k+1) / 2\rfloor}, \quad c_{k+1}=\frac{1}{d_{k+1}^{2}}\binom{2 k}{k} 2 h(\rho(x)+r), \quad r \geq 0, \label{s4}
\end{equation}
where $\lfloor y\rfloor$ stands for the integer part of $y\in \mathbb{R}$.
\begin{thm} Let $k \in \mathbb{N}$ and $r, s \geq 0$. Assume that $\rho \in {\cal C}_{S}^{k}(\mathbb{R})$ and that $\rho(x)>0$.  Choose the bandwidth $h:=h(x, n)$ in such a way that
\begin{equation}
	\left|D_{k+2, k, h}(\rho ; x)-\rho(x)\right| \leq r, \quad\left|L_{k+2, h}\left(\rho^{(k)} ; x\right) - (\rho^{(k)})_{\star}(x)\right| \leq \frac{1}{(2 h)^{k}} \sqrt{\frac{s}{2 n h}} . \label{t4}
\end{equation}
Then,
\begin{equation}
	P\left(\left|L_{k+2, h}^{(k+1)}\left(\hat{F}_{n} ; x\right) -(\rho^{(k)})_{\star}(x)\right| \geq l_{k+1}(x,n)\right) \leq \alpha, \label{u4}
\end{equation}
where
\begin{equation}
	l_{k+1}(x,n)=c_{k+1} d_{k+1} \tau^{-1}\left(\frac{\log (2 / \alpha)}{n c_{k+1}}\right)\frac{1}{(2 h)^{k+1}}+\frac{1}{(2 h)^{k}} \sqrt{\frac{s}{2 n h}} . \label{v4}
\end{equation} \label{te48}
\end{thm}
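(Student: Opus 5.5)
The plan is to start from the decomposition (\ref{r4}) and handle its stochastic and deterministic parts separately. By the second condition in (\ref{t4}), the deterministic term $L_{k+2,h}(\rho^{(k)};x)-(\rho^{(k)})_{\star}(x)$ is bounded in absolute value by $(2h)^{-k}\sqrt{s/(2nh)}$. This is exactly the second summand in (\ref{v4}). It then suffices to show that
\[
P\left(|T_{k+1,n}|\geq \frac{c_{k+1}d_{k+1}}{(2h)^{k+1}}\,\tau^{-1}\left(\frac{\log(2/\alpha)}{nc_{k+1}}\right)\right)\leq \alpha .
\]

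\textbf{Step 1: write $T_{k+1,n}$ as a normalized sum of i.i.d.\ bounded variables.} By (\ref{dftkh}) and (\ref{pampiri}),
\[
T_{k+1,n}=\frac{1}{n}\sum_{j=1}^n (Z_j-\mathbb{E}Z_j),\qquad Z_j=\frac{1}{(2h)^{k+1}}\Delta_h^{k+1}L_{1,h}\big(1_{(-\infty,\cdot]}(X_j);x\big).
\]
Use $\Delta_h^{k+1}=\Delta_h^k\Delta_h^1$ and the elementary identity
\[
\Delta_h^1 L_{1,h}(1_{(-\infty,\cdot]}(X_j);y)=\widetilde{\mathbb{E}}\,1_{\{|y+h\widetilde S_1-X_j|\le h\}}.
\]
Together with (\ref{dfdire}) this gives
\[
(2h)^{k+1}Z_j=\sum_{i=0}^k\binom{k}{i}(-1)^{k-i}\,\widetilde{\mathbb{E}}\,1_{\{|x+(2i-k)h+h\widetilde S_1-X_j|\le h\}}.
\]
The random intervals centred at $x+(2i-k)h+h\widetilde S_1$, for different $i$, overlap only at endpoints, so for fixed $\widetilde S_1$ at most two consecutive indices contribute. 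A short check of the signed sum of two neighbouring binomial coefficients shows that $|(2h)^{k+1}Z_j|\le d_{k+1}$. This is the step I expect to be the most delicate: the naive bound is $\max_i\binom{k}{i}$, and obtaining exactly $d_{k+1}=\binom{k}{\lfloor (k+1)/2\rfloor}$ requires tracking the alternating signs carefully. If that fails, I would fall back on the representation of Proposition \ref{prderast} through $\widetilde F_{k+2}^{(k+1)}$ and bound that spline directly.

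\textbf{Step 2: the variance.} Since only one or two indicators are nonzero at a time, I bound $(2h)^{2(k+1)}\mathbb{E}Z_j^2$ by $\sum_i\binom{k}{i}^2 P\big(|x+(2i-k)h+h\widetilde S_1-X_j|\le h\big)$, via Jensen in $\widetilde{\mathbb{E}}$ or directly. Each probability equals $2h\,\widetilde{\mathbb{E}}\rho(x+(2i-k)h+h\widetilde S_2)$. Dividing the weights $\binom{k}{i}^2$ by $\binom{2k}{k}$ produces precisely the law of $\widetilde T_k$ in (\ref{A}). Hence
\[
(2h)^{2(k+1)}\mathrm{Var}\,Z_j\le \binom{2k}{k}2h\,D_{k+2,k,h}(\rho;x)\le\binom{2k}{k}2h(\rho(x)+r)
\]
by the first condition in (\ref{t4}). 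Therefore $Y_j=(2h)^{k+1}Z_j/d_{k+1}$ satisfies $|Y_j|\le 1$ and $\mathrm{Var}\,Y_j\le c_{k+1}$, with $c_{k+1}$ as in (\ref{s4}).

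\textbf{Step 3: concentration.} Apply Bennett's inequality, in the form of the concentration Lemma \ref{lea1} of the Appendix, to $\frac1n\sum_j(Y_j-\mathbb{E}Y_j)$, using the bound $1$ and variance $c_{k+1}$. In two-sided form it reads
\[
P\left(\Big|\frac1n\sum_j(Y_j-\mathbb{E}Y_j)\Big|\ge c_{k+1}\,\delta\right)\le 2e^{-nc_{k+1}\tau(\delta)}.
\]
To apply Bennett in this form I will need the increments to be bounded by $1$, for instance by using nonnegativity of the relevant parts or the bound adopted in Lemma \ref{lea1}. Choosing $\delta=\tau^{-1}\big(\log(2/\alpha)/(nc_{k+1})\big)$ makes the right-hand side equal to $\alpha$. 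Multiplying back by $d_{k+1}/(2h)^{k+1}$ gives the required bound on $T_{k+1,n}$. Combined with the deterministic bias bound and the triangle inequality in (\ref{r4}), this yields (\ref{u4}) with $l_{k+1}(x,n)$ as in (\ref{v4}).
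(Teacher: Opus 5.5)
Your route differs from the paper's, and it works except for one step in Step 3, which needs a different justification from the two you suggest.

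\textbf{How your route compares.} The paper never integrates out $\theta=\widetilde S_1$. It writes $T_{k+1,n}=\frac{1}{n(2h)^{k+1}}\widetilde{\mathbb{E}}W_{k+1,n}(\widetilde S_1)$. It bounds the pointwise second moment through $\mathbb{E}Z_{k+1}^2(\theta)\le\mathbb{E}G_{k+1}^2(\theta)$ and the same reweighting by the law of $\widetilde T_k$ that you use, obtaining (\ref{57a}). It then invokes Lemma \ref{lea1}, a Bennett bound for the subordinated process built from Kwon's inequality (\ref{47a}) and a pointwise-in-$\theta$ mgf bound. You average over $\widetilde S_1$ first, so $T_{k+1,n}$ becomes a normalized i.i.d.\ scalar sum (the kernel form of Proposition \ref{prderast}). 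You then use Jensen for the second moment and apply scalar Bennett. You get the same $c_{k+1}$ and $d_{k+1}$ and avoid (\ref{47a}) entirely. In fact, since $\widetilde{\mathbb{E}}W_n(\widetilde R)=\sum_j\widetilde{\mathbb{E}}Z_{(j)}(\widetilde R)$ is itself an i.i.d.\ sum, your argument shows Lemma \ref{lea1} is just scalar Bennett plus Jensen. The paper's version buys a tool stated directly for the process $Z(\theta)$, which suits its random-operator viewpoint.

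Step 1 is not delicate. We have $d_{k+1}=\binom{k}{\lfloor (k+1)/2\rfloor}=\max_i\binom{k}{i}$. The half-open intervals produced by differencing $1_{\{X_j\le\cdot\}}$ are disjoint for distinct $i$, so for each value of $\widetilde S_1$ at most one term is active. The naive bound is therefore exactly the one you need.

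\textbf{The gap in Step 3.} Bennett in the form you quote needs the centered summands to satisfy $|Y_j-\mathbb{E}Y_j|\le 1$. You only have $|Y_j|\le 1$. For $k\ge1$ the weights $\binom{k}{i}(-1)^{k-i}$ alternate in sign, so $Y_j$ takes both signs and nonnegativity is unavailable. In general only $|Y_j-\mathbb{E}Y_j|\le 1+|\mathbb{E}Y_j|$ holds.

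Concretely, for $k=1$ we have $Y_j=-1$ at $X_j=x-h$ and $Y_j=1$ at $X_j=x+h$. If $X$ puts mass close to $0.9$ near $x-h$ and close to $0.1$ near $x+h$, then $\mathbb{E}Y_j\approx-0.8$. Hence $Y_j-\mathbb{E}Y_j\approx1.8$ with probability about $0.1$.

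Appealing to the bound behind Lemma \ref{lea1} does not help. The paper's (\ref{56a}) asserts $|Z_{k+1}(\theta)|\le d_{k+1}$ for the centered process, but the argument from (\ref{60a}) only controls $G_{k+1}(\theta)$. For $k=1$, take $y=x+h\theta$ with $F(y)-F(y-2h)=0.9$ and $F(y+2h)-F(y)=0.1$. Then $Z_2(\theta)=G_2(\theta)+0.8$ equals $1.8>d_2=1$ with probability $0.1$.

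\textbf{The fix.} It uses only what you already proved: $|Y_j|\le1$ and the raw bound $\mathbb{E}Y_j^2\le c_{k+1}$. Your Step 2 bounds the raw second moment of your summands, not just their variance, so keep it in that form. Since $u\mapsto(e^u-1-u)/u^2$ is nondecreasing, $Y\le 1$ implies $e^{tY}\le 1+tY+Y^2(e^t-1-t)$ for $t>0$. Hence
\[
\mathbb{E}e^{t(Y_j-\mathbb{E}Y_j)}\le \exp\big(\mathbb{E}Y_j^2\,(e^t-1-t)\big)\le \exp\big(c_{k+1}(e^t-1-t)\big),
\]
and the same holds with $-Y_j\le1$ in place of $Y_j$. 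The Chernoff step with $e^t=1+\delta$ gives $P\big(\big|\frac1n\sum_j(Y_j-\mathbb{E}Y_j)\big|\ge c_{k+1}\delta\big)\le 2e^{-nc_{k+1}\tau(\delta)}$. The rest of your argument then goes through unchanged.

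The same substitution also repairs the paper's use of Lemma \ref{lea1}: use $|G_{k+1}(\theta)|\le d_{k+1}$ and $\mathbb{E}G_{k+1}^2(\theta)$ in the pointwise mgf bound.
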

\begin{rem}
	Proceeding as in Remark \ref{rem42} and taking into account (\ref{s4}), it can be checked that
\begin{equation}
	l_{k+1}(x,n) \sim \frac{1}{(2 h)^{k}} \frac{1}{\sqrt{2 n h}}\left(t_{\alpha} \sqrt{\binom{2 k}{k}(\rho(x)+r)}+\sqrt{s}\right), \quad n h^{2 k+1} \rightarrow \infty . \label{w4}
\end{equation}
\end{rem}
Similar statements to those made after Corollary \ref{cor99} apply when $x$ is close to a discontinuity point of $\rho^{(k)}$.

 Let $x \in[a, b]$.  Suppose that $\rho$ is  a polynomial of degree $k+1$ at most on  $[a-(k+2)h_0, b+(k+2)h_0]$, where $h_0>0$ satisfies the first inequality in (\ref{t4}). Since the  operator $L_{k+2, h_0}$ preserves affine functions, the second inequality in (\ref{t4}) is fulfilled for $s=0$. Therefore, choosing $h=h_0$ in Theorem \ref{te48}, we have from (\ref{w4})
\[
l_{k+1}(x,n) \sim \frac{1}{\left(2 h_{0}\right)^{k}} \frac{1}{\sqrt{2 n h_{0}}} t_{\alpha} \sqrt{\binom{2 k}{k}(\rho(x)+r)}, \quad n \rightarrow \infty .
\]
Hence, this length is of order $1 / \sqrt{n}$.

On the other hand, suppose that the function $\left(\rho^{(k)}\right)_{x}$ satisfies the Lipschitz condition
\begin{equation*}
	\omega_{2}\left(\left(\rho^{(k)}\right)_{x}, [x-(k+2)h, x+(k+2)h]; h\right) \leq A(x) h^{\beta},  \quad  0 <\beta \leq 2, \label{C3}
\end{equation*}
for some positive constant $A(x)$.  By Proposition \ref{prbost} and (\ref{B3}), the second inequality in (\ref{t4}) is satisfied if
\begin{equation}
	e_{k+2} A(x) h^{\beta} \leq \frac{1}{(2 h)^{k}} \sqrt{\frac{s}{2 n h}} . \label{x4}
\end{equation}
Note that the first inequality in (\ref{t4}) is always fulfilled for a small enough $h$. Dismissing the constants in (\ref{x4}), we can choose $h\sim n^{-1/(2 \beta+2 k+1)}$. In such a case, the order of magnitude of the length $l_{k+1}(x,n)$ in (\ref{w4}) is
\begin{equation}
	l_{k+1}(x,n) \sim n^{-\beta /(2 \beta+2 k+1)} . \label{y4}
\end{equation}

Suppose that $\rho^{(k)}$ is uniformly continuous and satisfies the Lipschitz condition in (\ref{C3r}). In such a case, we have seen in Corollary \ref{co32} that the length $l_{k+1}$ of the confidence band for $\rho^{(k)}$ is of order $l_{k+1} \sim n^{-\beta /(2 \beta+2 k+2)}$, which is worse than that in (\ref{y4}).

Finally, we give the following upper bound for the MSE of the estimator $L_{k+2, h}^{(k+1)}(\hat{F}_{n};x)$.
\begin{thm} Let $k \in \mathbb{N}_{0}$ and assume that $F^{(1)}=\rho \in {\cal C}_{S}^{k}(\mathbb{R})$. Then,
	\begin{equation*}
		MSE\left(L_{k+2, h}^{(k+1)}\left(\hat{F}_{n} ; x\right)\right) \leq\binom{ 2 k}{k} \frac{1}{n(2 h)^{2 k+1}} D_{k+2, k, h}(\rho ; x)+\left(L_{k+2, h}\left(\rho^{(k)} ; x\right)-(\rho^{(k)})_{\star}(x)\right)^{2}.
	\end{equation*} \label{te413}
\end{thm}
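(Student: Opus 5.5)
We split the mean square error into variance plus squared bias, and then bound the variance by a pointwise inequality for a single summand of the estimator.

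\textbf{Step 1 (decomposition).} By (\ref{r4}), $L_{k+2,h}^{(k+1)}(\hat F_n;x)-(\rho^{(k)})_{\star}(x)=T_{k+1,n}+\bigl(L_{k+2,h}(\rho^{(k)};x)-(\rho^{(k)})_{\star}(x)\bigr)$, with the second term deterministic. Since $\mathbb{E}T_{k+1,n}=0$ by Fubini's theorem, the MSE equals $\mathbb{E}T_{k+1,n}^2$ plus the squared bias, which is exactly the second term of the claim. It therefore remains to show that
\[
\mathbb{E}T_{k+1,n}^2\le \binom{2k}{k}\frac{1}{n(2h)^{2k+1}}D_{k+2,k,h}(\rho;x).
\]

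\textbf{Step 2 (representation as an i.i.d.\ average).} By (\ref{stekde}) applied with $m=k+2$,
\[
L^{(k+1)}_{k+2,h}=(2h)^{-k}\Delta_h^k L^{(1)}_{2,h}.
\]
By Proposition \ref{prderast} with $m=2$, $k=1$, we have $L^{(1)}_{2,h}(\hat F_n;y)=\frac1n\sum_j K_h(y-X_j)$, where $K_h(z)=h^{-1}\widetilde F_2^{(1)}(z/h)$. This is the triangular density supported on $[-2h,2h]$, with $0\le K_h\le 1/(2h)$ and $\mathbb{E}K_h(y-X)=L_{2,h}(\rho;y)$. Using (\ref{dfdire}), we get $T_{k+1,n}=\frac1n\sum_{j}(Z_j-\mathbb{E}Z_j)$ with i.i.d.\ copies of
\[
Z=(2h)^{-k}\sum_{i=0}^k\binom{k}{i}(-1)^{k-i}K_i,\qquad K_i:=K_h\bigl(x+(2i-k)h-X\bigr).
\]
Hence $\mathbb{E}T_{k+1,n}^2=\frac1n\mathrm{Var}(Z)\le\frac1n\mathbb{E}Z^2$.

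\textbf{Step 3 (key pointwise bound; the main obstacle).} I expect the crux to be showing that
\[
Z^2\le (2h)^{-2k}\sum_{i=0}^k\binom{k}{i}^2K_i^2 .
\]
Expanding the square, the cross terms are $\binom{k}{i}\binom{k}{i'}(-1)^{i+i'}K_iK_{i'}$. The centres of the supports of $K_i$ and $K_{i'}$ are at distance $2h|i-i'|$ and each support has length $4h$. So for $|i-i'|\ge2$ the supports meet at most at a point, where $K_h$ vanishes, and $K_iK_{i'}=0$. For $|i-i'|=1$ the sign $(-1)^{i+i'}=-1$ and $K_iK_{i'}\ge0$, so these cross terms are nonpositive. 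This proves the displayed inequality.

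\textbf{Step 4 (conclusion).} Since $K_i^2\le K_i/(2h)$, we get
\[
\mathbb{E}Z^2\le(2h)^{-2k-1}\sum_i\binom{k}{i}^2L_{2,h}\bigl(\rho;x+(2i-k)h\bigr).
\]
By (\ref{A}) and (\ref{B}) with $m=k+2$,
\[
\sum_i\binom{k}{i}^2L_{2,h}\bigl(\rho;x+(2i-k)h\bigr)=\binom{2k}{k}\,\widetilde{\mathbb{E}}\rho\bigl(x+h(\widetilde T_k+\widetilde S_2)\bigr)=\binom{2k}{k}D_{k+2,k,h}(\rho;x).
\]
Dividing by $n$ gives the required variance bound, and together with Step 1 this yields the theorem.

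The case $k=0$ is consistent with this argument: there the cross terms are absent, and the bound matches the leading terms in (\ref{67a}).
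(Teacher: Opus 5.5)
Your proof is correct, but you handle the key variance step differently from the paper.

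\textbf{The paper's route.} It keeps the subordinated form $T_{k+1,n}=\frac{1}{n(2h)^{k+1}}\widetilde{\mathbb{E}}W_{k+1,n}(\widetilde{S}_1)$ and moves the square inside $\widetilde{\mathbb{E}}$ by Schwarz's inequality. It then quotes (\ref{57a}). There, for fixed $\theta$, $G_{k+1}(\theta)$ is a weighted sum of indicators of the \emph{disjoint} adjacent intervals $(x_j(\theta)-h,x_j(\theta)+h]$. So its second moment is exactly $\sum_j\binom{k}{j}^2(F(x_j(\theta)+h)-F(x_j(\theta)-h))$, with no cross terms at all. Averaging over $\theta$ then produces $D_{k+2,k,h}$.

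\textbf{Your route.} You integrate out the auxiliary variable first, so those disjoint indicators become overlapping triangular kernels. You then have to control the cross terms yourself: non-neighbours have disjoint supports, and neighbours carry coefficient sign $-1$. After that you use $K_i^2\le K_i/(2h)$.

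\textbf{Same endpoint.} Since $\widetilde{\mathbb{E}}\,1_{(x_i(\widetilde{S}_1)-h,\,x_i(\widetilde{S}_1)+h]}(X)=2hK_i$, both routes land on exactly the same bound $\frac{1}{n(2h)^{2k+1}}\sum_i\binom{k}{i}^2\mathbb{E}K_i$.

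\textbf{What each buys.} In the paper the bound is essentially free, because (\ref{57a}) is needed anyway as the variance input to Lemma \ref{lea1} in Theorem \ref{te48}. Your argument is self-contained: a plain i.i.d.\ variance computation, with no product-space or Jensen step. It also passes through the sharper intermediate bound $\frac{1}{n(2h)^{2k}}\sum_i\binom{k}{i}^2\mathbb{E}K_i^2$, which is weakened only by the crude step $K_i^2\le K_i/(2h)$. For $k=0$, keeping $\mathbb{E}K_h^2(x-X)\approx\rho_{\star}(x)/(3h)$ as $h\to 0$ would give the correct leading variance $\rho_{\star}(x)/(3nh)$, instead of the $\rho_{\star}(x)/(2nh)$ in the paper's bound.
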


Note that the upper bound in Theorem \ref{te413} is accurate.  In fact, for $k=0$ this upper bound is the same as that in (\ref{67a}).

\section{Simulations} \label{sesimu}

In this section, we illustrate Corollaries \ref{co44} and \ref{cor99}. In the first place, we consider a random variable $X$ having the Poisson distribution with mean $4$ and associated midpoint distribution function $F_{\star}(x)$. Let $m \in \mathbb{N}_{0}$. According to Corollary \ref{co44}, the bandwidth $h$ is selected as follows: if $x=m$, then $h=1$, whereas if $x \in(m, m+1)$, then $h=\min (x - m,(m+1)-x)$. For $\alpha=0.05$, we have
\begin{equation}
	l_{0}(x, n) \sim \frac{2.716}{\sqrt{n}}(\sigma \circ F)_{\star}(x), \quad n \rightarrow \infty, \label{poisi}
\end{equation}
as follows from Remark \ref{rem42}. Thus, the maximum asymptotic length in (\ref{poisi}) is attained when $x$ is near the median of $X$. 

In Figure \ref{fig:grafico1}, we plot $95\%$ confidence intervals for $F_{\star}(x)$ at the discontinuity points and at three intermediate points between discontinuities, choosing a sample size $n=200$.

\vspace{-1cm}

\begin{figure}[htbp]
    \centering
      \includegraphics[width=0.9\textwidth]{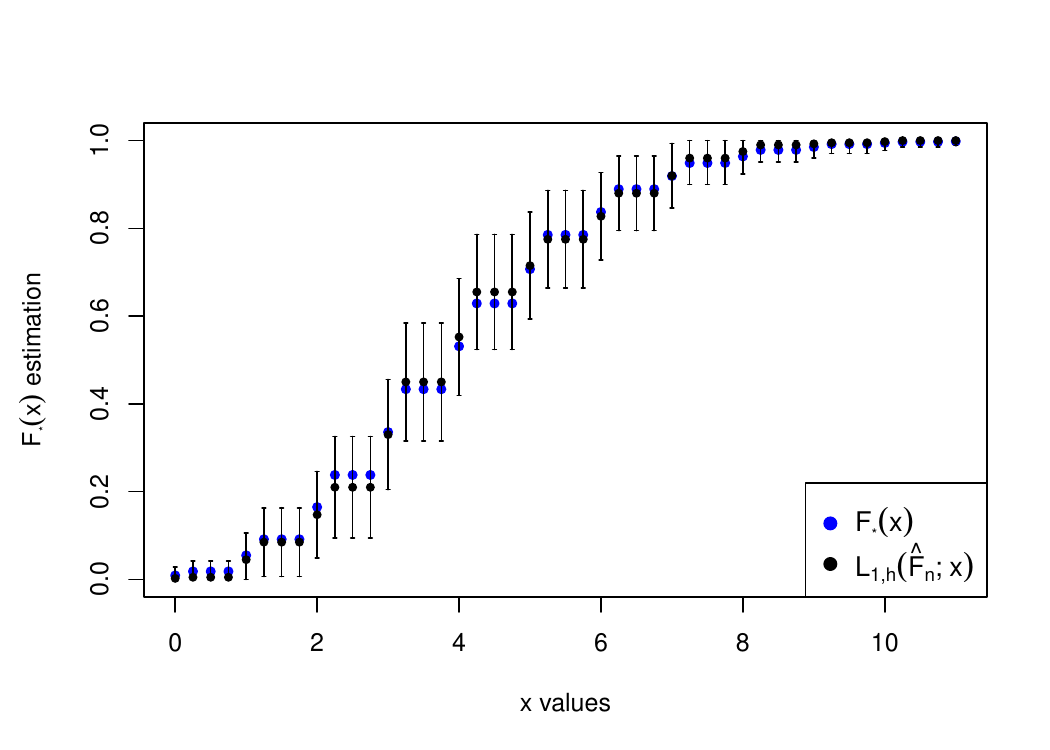}
        \caption{ 95$\%$ confidence intervals and estimates (black points and error bars) for $F_{\star}(x)$ (solid blue points), using a sample size $n=200$.}
        \label{fig:grafico1}
\end{figure}

On the other hand, let $a>0$ be fixed. To illustrate Corollary \ref{cor99}, we consider in the rest of this section the Pareto density
\begin{equation}
	\rho(x)=\frac{a}{(x+1)^{a+1}} 1_{(0, \infty)}(x)+\frac{a}{2} 1_{\{0\}}(x), \quad  x \in \mathbb{R} . \label{80p}
\end{equation}
Observe that $\rho_{\star} =\rho$. The following result gives confidence intervals for $\rho(0)=a / 2$.
\begin{prop} We have
\[		P\left(\left|L_{2, h_{0}}^{(1)}\left(\hat{F}_{n} ; 0\right)-\rho(0)\right| \geq l_{1}(0, n)\right) \leq \alpha ,
\]
where
\[l_{1}(0, n) \sim \frac{3 t_{\alpha} \sqrt{a}}{4} \frac{1}{\sqrt{n h_{0}}}, \quad h_{0}=\left(\frac{9 t_{\alpha}^{2}}{16 a (a+1)^{2} n}\right)^{1 / 3} . \]
\label{pr61}
\end{prop}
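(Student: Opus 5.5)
The plan is to obtain Proposition \ref{pr61} as a direct application of Corollary \ref{cor99} at the point $x=0$, with $\rho_{\star}(0)=\rho(0)=a/2$ and the Lipschitz exponent $\beta=1$. Indeed, with $\beta=1$ formula (\ref{z4}) reads $h_0=(t_\alpha^2\rho_\star(0)/(8A^2(0)n))^{1/3}$. Matching this with the stated $h_0=(9t_\alpha^2/(16a(a+1)^2n))^{1/3}$ forces
\[
A(0)=\frac{a(a+1)}{3}.
\]
With this choice, (\ref{z3}) gives
\[
l_1(0,n)\sim \frac{3}{2}\,t_\alpha\sqrt{\frac a2}\,\frac{1}{\sqrt{2nh_0}}=\frac{3t_\alpha\sqrt a}{4}\,\frac{1}{\sqrt{nh_0}},
\]
which is exactly the claimed length. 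So everything reduces to verifying hypothesis (\ref{z1}) with $\beta=1$, $A(0)=a(a+1)/3$ and $H(0)=\infty$, that is,
\[
|L_{2,h}(\rho;0)-\rho(0)|\le \frac{a(a+1)}{3}\,h,\qquad h>0 .
\]
Then $h_0\le H(0)$ holds trivially.

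To check this bias bound I will compute $L_{2,h}(\rho;0)=\widetilde{\mathbb{E}}\rho(h\widetilde S_2)$ explicitly. The density of $\widetilde S_2$ is the triangular density $(2-|u|)/4$ on $[-2,2]$.

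First I reduce to the auxiliary function. As in the proof of Proposition \ref{prbost}, since $\widetilde S_2$ is symmetric and absolutely continuous, $L_{2,h}(\rho;0)-\rho_\star(0)=\widetilde{\mathbb{E}}\rho_0(h\widetilde S_2)$. Here, by (\ref{D2}), $\rho_0(y)=\rho(y)-a$ for $y>0$, since $\rho(0+)=a$, and $\rho_0(y)=0$ for $y<0$, since $\rho(0-)=0$. Hence
\[
L_{2,h}(\rho;0)-\rho(0)=\int_0^2\bigl(a(1+hu)^{-a-1}-a\bigr)\frac{2-u}{4}\,du .
\]

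Next I bound the integrand pointwise. On $(0,\infty)$ the derivative of $y\mapsto a(1+y)^{-a-1}$ is bounded in absolute value by $a(a+1)$, so the mean value theorem gives $|a(1+hu)^{-a-1}-a|\le a(a+1)hu$. It remains to evaluate
\[
\int_0^2 u\,\frac{2-u}{4}\,du=\frac14\Bigl(4-\frac83\Bigr)=\frac13 .
\]
This yields the required bound with $A(0)=a(a+1)/3$, valid for all $h>0$.

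The only delicate point is the first step: correctly identifying the midpoint value and the one-sided structure of $\rho_0$ at the jump. Here $\rho(0)=a/2$ by the definition (\ref{80p}), which coincides with $\rho_\star(0)$, so that the bias is an integral over $u>0$ only. After that, the mean-value estimate and the elementary integral are routine, and the asymptotic form of $l_1(0,n)$ follows from Corollary \ref{cor99} (equivalently, from Theorem \ref{te45} and Remark \ref{rem92}) by substitution.
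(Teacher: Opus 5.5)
Your proposal is correct and matches the paper's proof. The paper also bounds the bias with a first-order Taylor estimate, $|L_{2,h}(\rho;0)-\rho(0)|\le h\|\rho^{(1)}\|_{[0,\infty)}\widetilde{\mathbb{E}}\widetilde{S}_2 1_{\{\widetilde{S}_2>0\}}\le a(a+1)h/3$, and then applies Corollary \ref{cor99} with $x=0$, $H(0)=\infty$, $A(0)=a(a+1)/3$, $\beta=1$; your route through $\rho_0$ and the explicit triangular-density integral is just a more detailed write-up of that same step.
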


\begin{proof} Using a Taylor expansion of first order around the origin, we get from (\ref{80p})\\
\[\left|L_{2, h}(\rho ; 0)-\rho(0)\right|=\left|\widetilde{\mathbb{E}} \rho\left(h \widetilde{S}_{2}\right) 1_{\left\{\widetilde{S}_{2}>0\right\}}-\frac{a}{2}\right|\leq h\left\|\rho^{(1)}\right\|_{[0,\infty)} \widetilde{\mathbb{E}} \widetilde{S}_{2} 1_{\left\{ \widetilde{S}_{2}>0\right\} }\leq\frac{a(a+1)}{3} h.\]
Hence, the result follows from Corollary \ref{cor99} by setting $x=0, \ H(0)=\infty$, $A(0)=a(a+1) / 3$, and $\beta=1$.
\end{proof}

\begin{prop} Let $x>0$. Then,
\begin{equation}
	P\left(\left|L_{2, h_{0}}^{(1)}\left(\hat{F}_{n} ; x\right)-\rho(x)\right| \geq l_{1}(x, n)\right) \leq \alpha, \label{81p}
\end{equation}
where
\[
l_{1}(x, n) \sim \frac{5 t_{\alpha}}{4} \sqrt{\frac{a}{(x+1)^{a+1}}} \frac{1}{\sqrt{2 n h_{0}}}, \quad h_{0}=\left(\frac{9 t_{\alpha}^{2}}{32 a(a+1)^{2}(a+2)^{2}} \frac{(x\vee 1)^{2 a+6}}{(x+1)^{a+1}} \frac{1}{n}\right)^{1 / 5}, \]
 whenever $h_{0} \leq(x\wedge 1) / 2$.\label{pr62}
\end{prop}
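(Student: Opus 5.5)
The plan mirrors Proposition \ref{pr61}: verify the Lipschitz-type bias condition (\ref{z1}) at $x>0$ with $\beta=2$, then invoke Corollary \ref{cor99}. With $\beta=2$ the factor in (\ref{z3}) is $(2\beta+1)/(2\beta)=5/4$, and $\rho_\star(x)=\rho(x)=a/(x+1)^{a+1}$. This already gives the stated form $l_1(x,n)\sim \frac{5t_\alpha}{4}\sqrt{a/(x+1)^{a+1}}\,(2nh_0)^{-1/2}$. It remains to identify $A(x)$ and $H(x)$ so that (\ref{z4}) reproduces the stated $h_0$.

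\emph{Bias bound.} Take $H(x)=(x\wedge 1)/2$. For $0<h\le H(x)$ we have $[x-2h,x+2h]\subseteq[x-(x\wedge1),x+(x\wedge1)]\subseteq(0,\infty)$ up to the endpoint $0$, where $\rho$ is smooth (away from $0$ we can take the closed interval $[x-2h,x+2h]\subset[0,\infty)$, and $\rho$ coincides there with the smooth branch except possibly at $0$, which has measure zero for the absolutely continuous $\widetilde S_2$). Since $\widetilde S_2$ is symmetric, a second-order Taylor expansion gives
\[
|L_{2,h}(\rho;x)-\rho(x)|=\tfrac12\big|\widetilde{\mathbb E}\big(\rho(x+h\widetilde S_2)-2\rho(x)+\rho(x-h\widetilde S_2)\big)\big|\le \tfrac{h^2}{2}\,\widetilde{\mathbb E}\widetilde S_2^2\,\|\rho^{(2)}\|_{[x-2h,x+2h]} .
\]
We have $\widetilde{\mathbb E}\widetilde S_2^2=2/3$ and $\rho^{(2)}(y)=a(a+1)(a+2)(y+1)^{-a-3}$, which is decreasing. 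Hence the sup is attained at $y=x-2h\ge x-(x\wedge 1)$, and we need $(x-2h+1)^{-a-3}\le$ something in terms of $x\vee1$. If $x\ge1$, then $x-2h+1\ge x$. If $x<1$, then $x-2h+1\ge 1$. Either way $x-2h+1\ge x\vee 1$, so
\[
|L_{2,h}(\rho;x)-\rho(x)|\le A(x)h^2,\qquad A(x)=\frac{a(a+1)(a+2)}{3\,(x\vee1)^{a+3}} .
\]

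\emph{Plugging in.} Insert $\beta=2$, this $A(x)$, and $\rho_\star(x)=a(x+1)^{-a-1}$ into (\ref{z4}):
\[
h_0=\Big(\frac{t_\alpha^2\,a(x+1)^{-a-1}}{32\,a^2(a+1)^2(a+2)^2(x\vee1)^{-2a-6}/9\;n}\Big)^{1/5}
=\Big(\frac{9t_\alpha^2}{32a(a+1)^2(a+2)^2}\frac{(x\vee1)^{2a+6}}{(x+1)^{a+1}}\frac1n\Big)^{1/5},
\]
which matches the statement. The hypothesis $h_0\le (x\wedge1)/2=H(x)$ is exactly what Corollary \ref{cor99} requires, and (\ref{81p}) follows.

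\emph{Main obstacle.} The only delicate step is the support argument: making sure $[x-2h,x+2h]$ avoids the jump of $\rho$ at $0$, so that Taylor's formula applies with a uniform bound on $\rho^{(2)}$. The choice $H(x)=(x\wedge1)/2$ is designed both to guarantee this and to produce the clean constant $(x\vee1)^{-a-3}$. If instead the intended constant had used $x$ rather than $x\vee1$, I would recheck this lower bound, but the stated $h_0$ confirms the $x\vee 1$ version.
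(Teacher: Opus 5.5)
Your proposal is correct and is essentially the paper's own proof. You use the same choices $H(x)=(x\wedge 1)/2$, $A(x)=a(a+1)(a+2)/\bigl(3(x\vee 1)^{a+3}\bigr)$ and $\beta=2$, the same second-order Taylor bound with $\widetilde{\mathbb{E}}\widetilde{S}_2^2=2/3$, and then invoke Corollary \ref{cor99}. Your symmetric-difference form of the Taylor step and your explicit check that $x-2h+1\geq x\vee 1$ (which the paper only calls ``easily checked'') give the same constant $B(x)/3$.
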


\begin{proof} It is easily checked that
\[
\left\|\rho^{(2)}\right\|_{[x-2 h, x+2 h]} \leq \frac{a(a+1)(a+2)}{(x\vee 1)^{a+3}}=: B(x), \quad h \leq(x\wedge 1) / 2 .
\]
Therefore, using a Taylor's expansion of second order around $x$, we have

\[\left|L_{2, h}(\rho ; x)-\rho(x)\right|=\left|\widetilde{\mathbb{E}} \rho\left(x+h \widetilde{S}_{2}\right)-\rho(x)\right| \\
\leq \frac{h^{2}}{2}\left\|\rho^{(2)}\right\|_{[x-2 h, x+2 h]} \widetilde{\mathbb{E}} \widetilde{S}_{2}^{2} \leq \frac{B(x)}{3} h^{2} .\]
Thus, the result follows from Corollary \ref{cor99} by choosing $H(x)=(x \wedge 1) / 2, \  A(x)=B(x) / 3$, and $\beta=2$. 	
\end{proof}

Let $0 < x\leq 1$. A sufficient condition for $h_{0} \leq(x \wedge 1) / 2$ is that
\begin{equation}
	\frac{9 t_{\alpha}^{2}}{a(a+1)^{2}(a+2)^{2} n} \leq x^{5} . \label{82p}
\end{equation}
Comparing Propositions \ref{pr61} and \ref{pr62}, we see that $l_{1}(0, n) \sim n^{-1 / 3}$, whereas $l_{1}(x, n) \sim n^{-2 / 5}, \ x>0$. However, if $x>0$ is close to the origin, we need a very large sample size $n$ to guarantee (\ref{81p}), as follows from (\ref{82p}).
Figure \ref{fig:grafico2} below gives us $95\%$ confidence intervals for the Pareto density $\rho=\rho_{\star}$ defined in (\ref{80p}) with $a=1$, when the sample size is $n=1500$. It should be pointed out that such confidence intervals are conservative. Also, observe that once $n=1500$ is chosen, confidence intervals are possible for $x\geq x_0=0.262$, where $x_0$ satisfies (\ref{82p}). 

\vspace{-1cm}
\begin{figure}[h]
	\centering
	\includegraphics[width=0.9\textwidth]{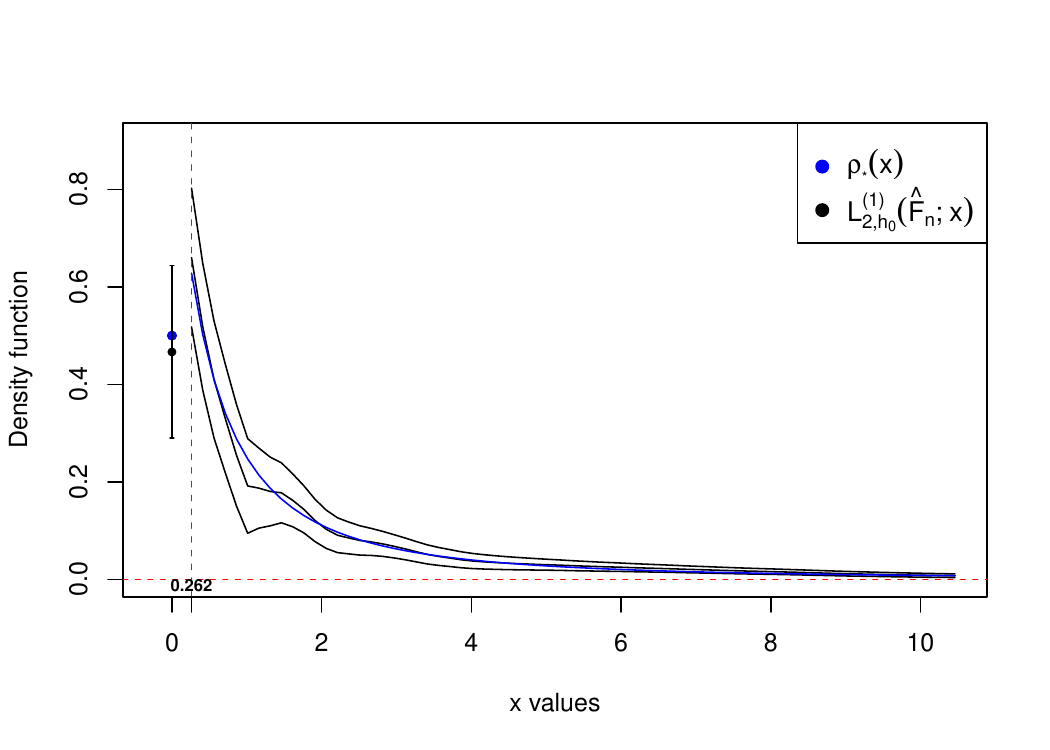}
	\caption{ 95$\%$ confidence intervals at $x=0$ and $x>x_0=0.262$, where $x_0$ satisfies \eqref{82p}, for the Pareto density (solid blue line) with $a=1$ and sample size $n=1500$. }
	\label{fig:grafico2}
\end{figure}
	\appendix
\section{Appendix} \label{seappe}
Let $\mathbb{Z}=(Z(\theta), \ -1 \leq \theta \leq 1)$ be a stochastic process such that
\begin{equation}
	\mathbb{E} Z(\theta)=0, \quad|Z(\theta)| \leq d, \quad-1 \leq \theta \leq 1, \label{44a}
\end{equation}
for some $d>0$. Let $\mathbb{Z}_{(j)}, j=1, \dots, n$, be a sequence of independent copies of $\mathbb{Z}$ and define
\begin{equation}
	W_{n}(\theta)=Z_{(1)}(\theta)+\dots +Z_{(n)}(\theta), \quad-1 \leq \theta \leq 1. \label{45a}
\end{equation}
We will use the following continuous form of Hölder's inequality shown by Kwon \cite{kwexte}. If $f(\omega, \widetilde{\omega})$ is a positive measurable function defined on $\Omega \times \widetilde{\Omega}$, then
\begin{equation}\int_{\Omega} \exp \left(\int_{\widetilde{\Omega}} \log f(\omega, \widetilde{\omega}) d \widetilde{P}(\widetilde{\omega})\right) d P(\omega) \leq \exp \left(\int_{\widetilde{\Omega}} \log \left(\int_{\Omega} f(\omega, \widetilde{\omega}) d P(\omega)\right) d \widetilde{P}(\widetilde{\omega})\right) \label{47a}\end{equation}

The following concentration inequality will be applied to estimate the tail probabilities of the random term defined in (\ref{dftkh}). A similar inequality can be found in Babu et al. \cite[Lemma 2.1]{baappl}.

\begin{lem}Let $\mathbb{Z}$ be as in (\ref{44a}) and let $\widetilde{R}$ be a $[-1,1]$-valued random variable. For any $c \geq d^{-2} \mathbb{E}\times\widetilde{\mathbb{E}} Z^{2}(\widetilde{R})$, we have
\[P\left(\left|\widetilde{E} W_n(\widetilde{R})\right| \geq n c d \delta\right) \leq 2 e^{- n c \tau(\delta)}, \quad \delta> 0,\] where the function $\tau(\delta)$ is defined in (\ref{a4}).\label{lea1} \end{lem}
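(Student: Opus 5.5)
This is a Bennett-type bound, and I would prove it by the Chernoff method, using Kwon's inequality (\ref{47a}) to move the exponential moment inside the $\widetilde{\mathbb{E}}$-average. Write $G_n=\widetilde{\mathbb{E}} W_n(\widetilde{R})=\sum_{j=1}^n \widetilde{\mathbb{E}} Z_{(j)}(\widetilde{R})$. I treat the upper tail $P(G_n\geq ncd\delta)$; the lower tail follows by applying the same argument to $-\mathbb{Z}$, which also satisfies (\ref{44a}) and has the same second moment. This accounts for the factor 2.

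\textbf{Step 1 (Chernoff and independence).} For $\lambda>0$, Markov's inequality gives
\[
P(G_n\geq ncd\delta)\leq e^{-\lambda ncd\delta}\,\mathbb{E}e^{\lambda G_n}.
\]
The summands $\widetilde{\mathbb{E}} Z_{(j)}(\widetilde{R})$ are independent and identically distributed functionals of the independent processes $\mathbb{Z}_{(j)}$, so
\[
\mathbb{E}e^{\lambda G_n}=\bigl(\mathbb{E}\exp(\lambda\widetilde{\mathbb{E}}Z(\widetilde{R}))\bigr)^n .
\]

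\textbf{Step 2 (Kwon's inequality).} Apply (\ref{47a}) with $f(\omega,\widetilde{\omega})=\exp(\lambda Z(\widetilde{R}(\widetilde{\omega}))(\omega))$, which is positive and jointly measurable. Since $\log f=\lambda Z(\widetilde{R})$, this yields
\[
\mathbb{E}\exp(\lambda\widetilde{\mathbb{E}}Z(\widetilde{R}))\leq \exp\Bigl(\widetilde{\mathbb{E}}\log \mathbb{E}e^{\lambda Z(\theta)}\big|_{\theta=\widetilde{R}}\Bigr).
\]
For fixed $\theta$ we have $\mathbb{E}Z(\theta)=0$ and $|Z(\theta)|\leq d$. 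The standard Bennett bound, which uses $e^{u}-1-u\leq u^2(e^{\lambda d}-1-\lambda d)/(\lambda d)^2$ for $u\leq\lambda d$, gives
\[
\log\mathbb{E}e^{\lambda Z(\theta)}\leq \log\bigl(1+v(\theta)\phi(\lambda d)/d^2\bigr)\leq v(\theta)\phi(\lambda d)/d^2,
\]
where $v(\theta)=\mathbb{E}Z^2(\theta)$ and $\phi(u)=e^u-1-u$. Averaging over $\widetilde{R}$ by Fubini, the exponent is at most $\phi(\lambda d)\,d^{-2}\,\mathbb{E}\times\widetilde{\mathbb{E}}Z^2(\widetilde{R})\leq c\,\phi(\lambda d)$. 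This is the only place the hypothesis on $c$ is used, and it relies on $\phi\geq 0$.

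\textbf{Step 3 (optimization).} Collecting the bounds,
\[
P(G_n\geq ncd\delta)\leq\exp\bigl(-nc[\lambda d\delta-\phi(\lambda d)]\bigr).
\]
Set $u=\lambda d$ and maximize $u\delta-e^u+1+u$. The maximum is at $u=\log(1+\delta)$ and equals $(1+\delta)\log(1+\delta)-\delta=\tau(\delta)$, which gives $e^{-nc\tau(\delta)}$.

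\textbf{Main obstacle.} The delicate point is Step 2, namely exchanging the random average $\widetilde{\mathbb{E}}$ with the exponential. A plain Jensen argument, $\exp(\widetilde{\mathbb{E}}\cdot)\leq\widetilde{\mathbb{E}}\exp(\cdot)$, followed by the per-$\theta$ Bennett bound would also work after using convexity of $v\mapsto e^{v\phi}$. However, it would only give the bound $\widetilde{\mathbb{E}}e^{\phi(\lambda d)v(\widetilde{R})/d^2}$, which is not controlled by $c$ alone. Kwon's inequality instead keeps the logarithm inside the $\widetilde{\mathbb{E}}$-average, so that linearity in $v$ is preserved. Measurability of $f$ is guaranteed by the joint measurability assumed for the processes.
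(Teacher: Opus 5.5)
Your proof is correct and uses the same ingredients as the paper: a Chernoff bound, Kwon's inequality \rf{47a}, the fixed-$\theta$ Bennett bound, Fubini with the hypothesis on $c$, and the choice $e^{\lambda d}-1=\delta$. The difference is the order of operations. The paper applies \rf{47a} to $\exp(tW_n(\widetilde{R}))$ for the whole sum, and then quotes from \cite{adalra} the bound $\mathbb{E}e^{\pm\beta W_n(\theta)}\leq\exp\bigl(n(e^{\beta d}-1-\beta d)d^{-2}\mathbb{E}Z^2(\theta)\bigr)$ at fixed $\theta$. You instead first factor $\mathbb{E}e^{\lambda G_n}$ over $j$, and only then exchange $\widetilde{\mathbb{E}}$ and the exponential for a single copy.

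This reordering has a consequence that your closing paragraph gets wrong: after factoring, Kwon's inequality is superfluous. Your single-copy estimate $\mathbb{E}e^{\lambda Z(\theta)}\leq 1+v(\theta)\phi(\lambda d)/d^2$ is \emph{linear} in $v(\theta)$. Plain Jensen, $\exp(\lambda\widetilde{\mathbb{E}}Z(\widetilde{R}))\leq\widetilde{\mathbb{E}}\exp(\lambda Z(\widetilde{R}))$, followed by Fubini therefore already gives $\mathbb{E}\exp(\lambda\widetilde{\mathbb{E}}Z(\widetilde{R}))\leq 1+c\,\phi(\lambda d)\leq e^{c\phi(\lambda d)}$. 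The nonlinearity you describe only appears if you pass to $e^{v\phi/d^2}$ before averaging.

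Kwon's inequality is what makes the paper's ordering work. There the fixed-$\theta$ bound for the whole sum, $(1+v(\theta)\phi/d^2)^n$, is nonlinear in $v$, so Jensen would lose.

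The trade-off is in how much independence each route uses. The paper only needs $Z_{(1)}(\theta),\dots,Z_{(n)}(\theta)$ to be independent for each fixed $\theta$. You need the averaged summands $\widetilde{\mathbb{E}}Z_{(j)}(\widetilde{R})$ to be independent, i.e.\ independence of the processes as a whole. That is what ``independent copies'' means, and it holds in the application, where each $\mathbb{Z}_{k,j}$ is a function of $U_j$ alone.
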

\begin{proof} Applying (\ref{47a}) to the function
\[
f(w, \widetilde{w})=\exp \left(t W_{n}(\widetilde{R}(\widetilde{w}))(w)\right),\quad t \in \mathbb{R},
\]
we obtain
\begin{equation}
	\mathbb{E} e^{t \widetilde{\mathbb{E}} W_{n}(\widetilde{R})} \leq \exp \left(\widetilde{\mathbb{E}} \log \mathbb{E} e^{t W_{n}(\widetilde{R})}\right), \quad t \in \mathbb{R} . \label{48a}
\end{equation}
Let $\beta>0$. We claim that
\begin{equation}
	\max \left(\mathbb{E} e^{\beta \widetilde{\mathbb{E}} W_{n}(\widetilde{R})}, \mathbb{E} e^{-\beta \widetilde{\mathbb{E}} W_{n}(\widetilde{R})}\right) \leq e^{n c\left(e^{\beta d} - 1 - \beta d\right)} . \label{49a}
\end{equation}
In fact, it was show in \cite[lemma A.1]{adalra} that
\[
\max \left(\mathbb{E} e^{\beta W_{n}(\theta)}, \mathbb{E} e^{-\beta W_{n}(\theta)}\right) \leq \exp \left(n \frac{e^{\beta d} - 1-\beta d}{d^{2}} \mathbb{E} Z^{2}(\theta)\right), \ 1 \leq \theta \leq 1 .
\]
We therefore have from (\ref{48a}) and Fubini's theorem
\begin{align*}
&\mathbb{E} e^{\beta \widetilde{\mathbb{E}} W_{n}(\widetilde{R})} \leq \exp \left(\widetilde{\mathbb{E}} \log \mathbb{E} e^{\beta W_{n}(\widetilde{R})}\right) \\&\leq \exp \left(n \frac{e^{\beta d} - 1 - \beta d}{d^{2}} \mathbb{E}\times\widetilde{\mathbb{E}} Z^{2}(\widetilde{R})\right) \leq e^{n c\left(e^{\beta d} - 1 -\beta d\right)}.
\end{align*}
The same inequality holds replacing $\beta$ by $-\beta$. This shows claim (\ref{49a}).

Let $\delta>0$.  From (\ref{49a}), we get
\begin{align}
&P\left(\left|\widetilde{\mathbb{E}} W_{n}(\widetilde{R})\right| \geq n c d \delta\right) \leq e^{-\beta n c d \delta} \mathbb{E} e^{\beta\left|\widetilde{\mathbb{E}} W_{n}(\widetilde{R})\right|} \nonumber\\ & \leq 2 \exp \left(n c\left(e^{\beta d}-1 - \beta d(1+\delta)\right)\right) . \label{50a}
\end{align}
Choosing the value of $\beta$ minimizing the exponent in (\ref{50a}), that is, $e^{\beta d} - 1=\delta$, the result follows from \rf{a4}.
\end{proof}
Let $k \in N_{0}$. Let  $T_{k, n}$ be the random variable defined in (\ref{dftkh}), that is,
\begin{equation}
	T_{k, n}=\frac{1}{n(2 h)^{k}} \widetilde{\mathbb{E}} \Delta_{h}^{k}\left(S_{n}\left(F\left(x+h \widetilde{S}_{1}\right)\right)- nF\left(x+h \widetilde{S}_{1}\right)\right). \label{51a}
\end{equation} In view of (\ref{51a}), we define the centered stochastic process $\mathbb{Z}_{k}$ by
\begin{equation}
	Z_{k}(\theta)=\Delta_{h}^{k}\left(S_{1}(F(x+h \theta))-F(x+h \theta)\right), \quad-1 \leq \theta \leq 1 . \label{52a}
\end{equation}
Note that the dependence on $x$ and $h$ in (\ref{51a}) and (\ref{52a}) is dropped. Let $\mathbb{Z}_{k, j}$, $j= 1, \dots, n$, be a sequence of independent copies of $\mathbb{Z}_{k}$ and define
\begin{equation}
	W_{k, n}(\theta)=Z_{k, 1}(\theta)+\dots +Z_{k, n}(\theta), \quad-1 \leq \theta \leq 1, \label{53a}
\end{equation}
so that we can write
\begin{equation}
	T_{k, n}=\frac{1}{n(2 h)^{k}} \widetilde{\mathbb{E}} W_{k, n}\left(\widetilde{S}_{1}\right). \label{54a}
\end{equation}
To estimate the tail probabilities of $T_{k,n}$ by means of Lemma \ref{lea1}, the following auxiliary result will be needed.

\begin{lem} Let $k \in \mathbb{N}_{0}$. Then.
\begin{equation}
	\mathbb{E}\times\widetilde{\mathbb{E}} Z_{0}^{2}\left(\widetilde{S}_{1}\right)=L_{1, h}\left(\sigma^{2} \circ F ; x\right) . \label{55a}
\end{equation}
If, in addition, $F \in {\cal C}_{S}^{1}(\mathbb{R})$ with $F^{(1)}=\rho$, then
\begin{equation}
	\left|Z_{k+1}(\theta)\right|\leq\binom{k}{\lfloor(k+1) / 2\rfloor}=: d_{k+1}, \quad-1 \leq \theta \leq 1, \label{56a}
\end{equation}
and
\begin{equation}
	\mathbb{E}\times\widetilde{\mathbb{E}} Z_{k+1}^{2}\left(\widetilde{S}_{1}\right)\leq \binom{2k}{k}2hD_{k+2, k, h}(\rho ; x),\label{57a}
\end{equation}
where the operator $D_{k+2, k, h}$ is defined in (\ref{B}).
\end{lem}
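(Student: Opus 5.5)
The three claims come from a single representation of $Z_{k}(\theta)$ for one uniform variable $U=U_1$. Put $g_\theta(y)=1_{\{U\le F(y)\}}-F(y)$, so that $S_1(F(x+h\theta))-F(x+h\theta)=g_\theta(x+h\theta)$.

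\emph{Proof of (\ref{55a}).} For fixed $\theta$, $Z_0(\theta)=1_{\{U\le F(x+h\theta)\}}-F(x+h\theta)$ is a centered Bernoulli variable. Hence
\[
\mathbb{E}Z_0^2(\theta)=F(x+h\theta)\bigl(1-F(x+h\theta)\bigr)=\sigma^2\circ F(x+h\theta).
\]
Since $U$ and $\widetilde{S}_1$ are independent, Fubini's theorem gives
\[
\mathbb{E}\times\widetilde{\mathbb{E}}Z_0^2(\widetilde{S}_1)=\widetilde{\mathbb{E}}\,\sigma^2\circ F(x+h\widetilde{S}_1)=L_{1,h}(\sigma^2\circ F;x),
\]
which is (\ref{55a}).

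\emph{Representation of $Z_{k+1}$.} Write $y=x+h\theta$ and use (\ref{dfdire}) with $k+1$ in place of $m$. The nodes $y_j=y+(2j-k-1)h$, $j=0,\dots,k+1$, are increasing, and $1_{\{U\le F(\cdot)\}}$ is a step function in its argument. The alternating sums $\sum_{j\ge j_0}\binom{k+1}{j}(-1)^{k+1-j}$ telescope by Pascal's rule to $\pm\binom{k}{j_0-1}$. Therefore
\[
\Delta_h^{k+1}1_{\{U\le F(\cdot)\}}(y)=\sum_{i=0}^{k}c_i\,1_{A_i}(U),\qquad c_i=(-1)^{k-i}\binom{k}{i},\quad A_i=(F(y_i),F(y_{i+1})],
\]
with the $A_i$ disjoint. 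Taking expectations in $U$ gives $\Delta_h^{k+1}F(y)=\sum_i c_ip_i$, where $p_i=F(y_{i+1})-F(y_i)$. Consequently
\[
Z_{k+1}(\theta)=\sum_{i=0}^k c_i\bigl(1_{A_i}(U)-p_i\bigr).
\]

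\emph{Proof of (\ref{57a}).} From the representation,
\[
\mathbb{E}Z_{k+1}^2(\theta)=\operatorname{Var}\Bigl(\sum_i c_i1_{A_i}(U)\Bigr)\le\sum_i\binom{k}{i}^2p_i.
\]
Because $F^{(1)}=\rho$, we have $p_i=\int_{y_i}^{y_{i+1}}\rho=2h\,\widetilde{\mathbb{E}}\rho\bigl(y+(2i-k)h+h\widetilde{V}\bigr)$. Now integrate $\theta$ against an independent copy $\widetilde{S}_1$, and write $\binom{k}{i}^2=\binom{2k}{k}P(\widetilde{R}_k=i)$ using (\ref{A}). Since $2i-k$ is then the value of $\widetilde{T}_k$, this gives
\[
\mathbb{E}\times\widetilde{\mathbb{E}}Z_{k+1}^2(\widetilde{S}_1)\le\binom{2k}{k}\,2h\,\widetilde{\mathbb{E}}\rho\bigl(x+h(\widetilde{T}_k+\widetilde{S}_2)\bigr)=\binom{2k}{k}\,2h\,D_{k+2,k,h}(\rho;x),
\]
which is (\ref{57a}) by (\ref{B}).

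\emph{Proof of (\ref{56a}): the main obstacle.} The crude bound from the representation is $|c_I-\sum_ic_ip_i|\le 2\max_i\binom{k}{i}$, where $c_I$ is the coefficient of the interval containing $U$ (or $0$ if $U$ lies in none). This loses a factor $2$, and it is not obvious that it can be removed.
\begin{itemize}
\item First try to exploit the alternating signs of the $c_i$. On $U\in A_I$ one has $Z_{k+1}=c_I(1-p_I)-\sum_{i\neq I}c_ip_i$, and $\sum_ip_i\le1$, so the aim is a convex-combination argument.
\item The risky point is whether the terms $c_ip_i$ with signs opposite to $c_I$ can add to $|c_I|$ rather than partially cancel. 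A candidate counterexample already occurs for $k=1$: take $c_0=-1$, $c_1=1$ and $U\in A_1$. Then $Z_2=1-p_1+p_0$, which approaches $2$ when $p_0\to1$ and $p_1\to0$.
\item If this cannot be excluded, I expect to need to prove the sharper identity $\sum_j\binom{k+1}{j}=2^{k+1}$-type bounds for the maximum of $|\sum_{j\ge j_0}(\cdots)|$ over pairs of indices, or settle for the constant $2d_{k+1}$.
\end{itemize}
So (\ref{56a}) with the constant $d_{k+1}$ exactly as stated is the step I expect to be hardest and possibly not to go through.
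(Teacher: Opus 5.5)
Your proofs of (\ref{55a}) and (\ref{57a}) are correct and follow the paper's argument. For (\ref{55a}) this is the Bernoulli variance plus Fubini. For (\ref{57a}) it is the decomposition of $\Delta_h^{k+1}S_1(F(\cdot))$ over the disjoint intervals $A_i$ with weights $c_i=(-1)^{k-i}\binom{k}{i}$, then variance bounded by the second moment, $\binom{k}{i}^2=\binom{2k}{k}P(\widetilde R_k=i)$, and $\widetilde S_1+\widetilde V\eqla\widetilde S_2$. Your Pascal telescoping is equivalent to the paper's use of $\Delta_h^{k+1}=\Delta_h^k\Delta_h^1$ together with (\ref{59a}).

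On (\ref{56a}), your $k=1$ example is a genuine counterexample, not just a candidate: (\ref{56a}) is false for every $k\ge1$, so you should conclude that rather than hedge. For a concrete instance, take $x=\theta=0$, $h=1/2$ and $\rho=0.9\cdot 1_{[-1,0)}+0.1\cdot 1_{[0,1]}$, which belongs to ${\cal C}_S(\mathbb{R})$ with $S=\mathbb{Z}$. Then $\Delta_h^2F(0)=F(1)-2F(0)+F(-1)=-0.8$. On the event $\{0.9<U_1\le1\}$, which has probability $0.1$, one has $\Delta_h^2S_1(F(\cdot))(0)=1$, hence $Z_2(0)=1.8>1=d_2$.

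The paper's proof has exactly the flaw you suspected. It reads the bound $\max_j\binom{k}{j}$ off the values of the \emph{uncentered} variable $G_{k+1}(\theta)$ in (\ref{60a}), and then transfers it to $Z_{k+1}(\theta)=G_{k+1}(\theta)-\Delta_h^{k+1}F(x+h\theta)$. That transfer is valid only when all weights $c_j$ have one sign, i.e.\ for $k=0$, where $Z_1(\theta)\in[-p_0,1-p_0]$.

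What is true is the following. Put $M^+=\max_j\max(c_j,0)$ and $M^-=\max_j\max(-c_j,0)$. Since $p_j\ge0$ and $\sum_jp_j\le1$, both the realized weight ($c_I$ or $0$) and the mean $\sum_jc_jp_j$ lie in $[-M^-,M^+]$. Hence
\[
|Z_{k+1}(\theta)|\le M^++M^-=\binom{k+1}{\lfloor (k+1)/2\rfloor},
\]
where for $k\ge1$ the identity follows from Pascal's rule, because the largest weights of opposite signs sit at adjacent indices.

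This constant is sharp. Put almost all the mass of $[y_0,y_{k+1}]$ on the subinterval carrying $-M^-$, and let $U_1$ fall in the subinterval carrying $M^+$. The constant is at most $2d_{k+1}$, with equality for odd $k$, and it equals $d_{k+1}$ only when $k=0$.

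So what your proposal lacks is a conclusion, not an idea. Drop the third bullet, since no binomial identity can rescue $d_{k+1}$, and state (\ref{56a}) with the corrected constant. This changes the $d$ fed into Lemma \ref{lea1} in the proof of Theorem \ref{te48} for $k\ge1$, and hence the constants $c_{k+1}$ and $l_{k+1}$ there. Theorem \ref{te45} (the case $k=0$) and Theorem \ref{te413} (which uses only (\ref{57a})) are unaffected.
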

\begin{proof}  Using (\ref{52a}) with $k=0$, we have from (\ref{dfstek}) and Fubini's theorem
\[
\mathbb{E}\times\widetilde{\mathbb{E}} Z_{0}^{2}\left(\widetilde{S}_{1}\right)=\widetilde{\mathbb{E}} F\left(x+h \widetilde{S}_{1}\right)\left(1-F\left(x+h \widetilde{S}_{1}\right)\right)=L_{1, h}\left(\sigma^{2} \circ F ; x\right),
\]
thus showing (\ref{55a}). On the other hand, let $-1 \leq \theta \leq 1$.  Denote
\begin{equation}
	x_{j}(\theta)=x+(2 j-k) h+h \theta, \quad j=0,1, \dots, k . \label{58a}
\end{equation}
Observe that
\begin{equation}
	x_{j-1}(\theta)+h=x_{j}(\theta)-h, \quad j=1, \dots, k . \label{59a}
\end{equation}
By (\ref{dfsydi}), (\ref{dfdire}), and (\ref{58a}), we can rewrite (\ref{52a}) as
\begin{align}
&Z_{k+1}(\theta)=\Delta_{h}^{k}\left(\Delta_{h}^{1}\left(S_{1}\left(F(x+h\theta)\right)-\Delta_{h}^{1}F(x+h\theta) \right) \right)\nonumber\\
&=\sum_{j=0}^{k}\binom{k}{j}(-1)^{k-j}\left(\Delta_{h}^{1}\left(S_{1}\left( F\left(x_{j}(\theta)\right)\right)-\Delta_{h}^{1}F\left(x_{j}(\theta)\right)\right)\right)\nonumber\\
	&=\sum_{j=0}^{k}\binom{k}{j}(-1)^{k-j}\left(S_{1}\left(F\left(x_{j}(\theta)+h\right)\right)-S_{1}\left(F\left(x_{j}(\theta)-h\right)\right)\right)-\Delta_{h}^{k+1} F(x+h \theta)\nonumber \\
	&=: G_{k+1}(\theta)-\Delta_{h}^{k+1} F(x+h \theta). \label{60a}
\end{align}
From (\ref{59a}) and the second equality in (\ref{60a}), we see that
\[
\left|Z_{k +1}(\theta)\right| \leq \max _{0 \leq j \leq k}\binom{k}{j}=\binom{k}{\lfloor(k+1) / 2\rfloor} .
\]
Finally, we have from (\ref{60a})
\begin{equation}
	\mathbb{E} Z_{k+1}^{2}(\theta)=\mathbb{E} G_{k+1}^{2}(\theta)-\left(\Delta_{h}^{k+1} F(x+h \theta)\right)^{2} \leq \mathbb{E} G_{k+1}^{2}(\theta) . \label{61a}
\end{equation}
By (\ref{59a}), the probability law of $G_{k+1}(\theta)$ is given by
\begin{equation}
	P\left(G_{k+1}(\theta)=\binom{k}{j}(-1)^{k - j}\right)=F\left(x_{j}(\theta)+h\right)-F\left(x_{j}(\theta) - h\right), \quad j=1, \dots, k, \label{62a}
\end{equation}
and
\[
P\left(G_{k+1}(\theta)=0\right)=F\left(x_{0}(\theta) - h\right)+1-F\left(x_{k}(\theta)+h\right) .
\]
Let $\widetilde{V}$ be a random variable uniformly distributed on $[-1,1]$, independent of $\widetilde{S}_{1}$ and $\widetilde{T}_{k}$, as defined in (\ref{A}). From (\ref{58a}) and (\ref{62a}), we get
\begin{align*}
	&\mathbb{E} G^{2}_{k+1}(\theta)=\sum_{j=0}^{k}\binom{k}{j}^{2}\left(F\left(x_{j}(\theta)+h\right)-F\left(x_{j}(\theta)-h\right)\right)=2 h \sum_{j=0}^{k}\binom{k}{j}^{2} \widetilde{\mathbb{E}} \rho\left(x_{j}(\theta)+h \widetilde{V}\right)\\
&=2 h\binom{2 k}{k} \sum_{j=0}^{k} \frac{\binom{k}{j}^{2}}{\binom{2 k}{k}} \widetilde{\mathbb{E}} \rho(x+(2 j-k) h+h \theta+h \widetilde{V})\\
&=2 h\binom{2 k}{k} \widetilde{\mathbb{E}} \rho\left(x+h\left(\widetilde{T}_{k}+\theta+\widetilde{V}\right)\right).
\end{align*}
Hence, we have from (\ref{61a}) and Fubini's theorem
\[\mathbb{E} \times \widetilde{\mathbb{E}} Z_{k+1}^{2}\left(\widetilde{S}_{1}\right) \leq 2 h\binom{2 k}{k} \widetilde{\mathbb{E}} \rho\left(x+h\left(\widetilde{T}_{k}+\widetilde{S}_{2}\right)\right)=2 h\binom{2 k}{k} D_{k+2, k, h}(\rho ; x),\] where the last equality follows from (\ref{B}). This completes the proof.
\end{proof}

\begin{proof}[\bf Proof of Theorem \ref{te41}] By (\ref{b4}) and (\ref{55a}), we have
	\begin{equation}
		\mathbb{E} \times \widetilde{\mathbb{E}} Z_{0}^{2}\left(\widetilde{S}_{1}\right) \leq\left(\sigma^{2} \circ F\right)_{\star}(x)+\nu_{n}=: c_{0} . \label{72a}
	\end{equation}
	Let $\delta>0$. From (\ref{b4}) and identity (\ref{63a}) below, we see that
	\begin{align}
		&P\left(\left|L_{1, h}\left(\hat{F}_{n} ; x\right)-F_{\star}(x)\right| \geq c_{0} \delta+\sqrt{\frac{\epsilon_{n}}{n}}\right)\nonumber \\ &\leq P\left(\left|T_{0, n}\right| \geq c_{0} \delta\right)=P\left(\left| \widetilde{\mathbb{E}} W_{0, n}\left(\widetilde{S_{1}} \right)\right| \geq n c_{0} \delta\right), \label{73a}
	\end{align}
	where the last equality follows from (\ref{54a}). We apply Lemma \ref{lea1} with $d=1$, replacing $W_{n}, \widetilde{R}$ and $c$ by $W_{0, n}, \widetilde{S}_{1}$, and $c_{0}$, respectively, to obtain
	\begin{equation}
		P\left(\left|\widetilde{\mathbb{E}} W_{0, n}\left(\widetilde{S}_{1}\right)\right| \geq n c_{0} \delta\right) \leq 2 e^{-n c_{0} \tau(\delta)} . \label{74a}
	\end{equation}
	The result follows from (\ref{72a})-(\ref{74a}) by choosing $\delta$ in such a way that
	\[
	\tau(\delta)=\frac{\log (2 / \alpha)}{n c_{0}} .
	\]
\end{proof}

\begin{proof}[\bf Proof of Theorem \ref{pra3}]  Recalling (\ref{31v3}), we start with the identity
	\begin{equation}
		L_{1, h}\left(\hat{F}_{n} ; x\right)-F_{\star}(x)=T_{0, n}+L_{1, h}(F ; x)-F_{\star}(x),\quad  F \in {\cal C}_{S}(\mathbb{R}) , \label{63a}
	\end{equation}
	thus implying that
\begin{equation}
	MSE\left(L_{1, h}\left(\hat{F}_{n} ; x\right)\right)=\mathbb{E} T_{0, n}^{2}+\left(L_{1, h}(F ; x)-F_{\star}(x)\right)^{2} . \label{65a}
\end{equation}
Let $u, v \in[0,1]$. It is easily checked that
\[
\mathbb{E}\left(S_{1}(u)-u\right)\left(S_{1}(v) - v\right)=(u \wedge v)(1-(u \vee v)) .
\]
Thus, the increase of $F$ and (\ref{52a}) imply that
\[
\mathbb{E} Z_{0}(u) Z_{0}(v)=F(x+h(u \wedge v))(1-F(x+h(u \vee v)) .
\]
In turn, this and (\ref{53a}) entail that
\[\mathbb{E}W_{0, n}(u) W_{0, n}(v) =n\mathbb{E}Z_{0}(u) Z_{0}(v)=n F(x+h(u \wedge v))(1-F(x+h(u \vee v)).\]
We therefore have from (\ref{54a}) and Fubini's theorem
\begin{align*}&\mathbb{E} T_{0, n}^{2}=\frac{1}{n^{2}} \mathbb{E}\left(\widetilde{\mathbb{E}} W_{0, n}\left(\widetilde{S}_{1}\right)\right)^{2}=\frac{1}{n^{2}} \mathbb{E}\times\widetilde{\mathbb{E}} W_{0, n}\left(\widetilde{R}_{1}\right) W_{0, n}\left(\widetilde{S}_{1}\right)\\
&=\frac{1}{n} \widetilde{\mathbb{E}} F\left(x+h\left(\widetilde{R}_{1} \wedge \widetilde{S}_{1}\right)\right)\left(1-F\left(x+h\left(\widetilde{R}_{1} \vee \widetilde{S}_{1}\right)\right)\right).\end{align*}
Hence, the equality in (\ref{64a}) follows from (\ref{65a}). The inequality in (\ref{64a}) is an immediate consequence of the increase of $F$. \end{proof}

\begin{proof}[\bf Proof of Theorem \ref{te43}]  By Theorem \ref{pra3} and (\ref{b4}), we see that
\begin{align*}
MSE\left(L_{1, h}\left(\hat{F}_{n} ; x\right)\right) & \leq \frac{1}{n}\left|L_{1, h}\left(\sigma^{2} \circ F ; x\right)-\left(\sigma^{2} \circ F\right)_{\star}(x)\right|+\left(L_{1, h}(F ; x)-F_{\star}(x)\right)^{2} \\
	& \leq \frac{\epsilon_{n}+\nu_{n}}{n} .
\end{align*}
Hence, the result follows from Chebyshev's inequality.
\end{proof}

\begin{proof}[\bf Proof of Theorem \ref{te45}] As follows from (\ref{C}), $D_{2,0, h}=L_{2, h}$. Thus, we have from (\ref{h4}) and (\ref{57a})
\begin{equation}
	\mathbb{E}\times\widetilde{\mathbb{E}} Z_{1}^{2}\left(\widetilde{S}_{1}\right) \leq 2 h L_{2, h}(\rho ; x) \leq 2 h\left(\rho_{\star}(x)+\sqrt{\frac{r}{2 n h}}\right)=: c_{1} . \label{75a}
\end{equation}
Let $\delta>0$. From (\ref{h4}) and identity (\ref{66a}) below, we see that
\begin{align}
&	P\left(\left|L_{2, h}^{(1)}\left(\hat{F}_{n} ; x\right)-\rho_{\star}(x)\right| \geq c_{1} \delta+\sqrt{\frac{r}{2 n h}}\right) \leq P\left(\left|T_{1, n}\right| \geq c_{1} \delta\right) \nonumber  \\
	&=P\left(\left|\widetilde{\mathbb{E}} W_{1, n}\left(\widetilde{S}_{1}\right)\right| \geq n c_{1} 2 h \delta\right),  \label{76a}
\end{align}
where the last equality follows from (\ref{54a}).

By (\ref{56a}), $d_{1}=1$. Therefore, applying Lemma \ref{lea1} with $d=d_{1}=1$, and replacing $W_{n}, \widetilde{R}$, and $c$, by $W_{1, n}, \widetilde{S}_{1}$, and $c_{1}$, respectively, we get
\begin{equation}
	P\left(\left| \widetilde{\mathbb{E}} W_{1, n}\left(\widetilde{S}_{1}\right) \right| \geq n c_{1} 2 h \delta\right) \leq 2 e^{-n c_{1} \tau(2 h \delta)} . \label{77a}
\end{equation}
The result follows from (\ref{75a})-(\ref{77a}) by choosing $\delta$ as
\[
\tau(2 h \delta)=\frac{\log (2 / \alpha)}{n c_{1}} .
\]
\end{proof}

\begin{proof}[\bf Proof of Corollary \ref{cor99}] Let $a>0$ and $0<t<1$. The function
\begin{equation}
f(r)=\frac{1}{r^{t/2}}(a+r^{1/2}),\quad r>0, \label{y1}
\end{equation}
attains its minimum at
\begin{equation}
r_0=\left(\frac{at}{1-t}\right)^2. \label{y2}
\end{equation}
In view of (\ref{h4}), we choose $r$ such that
\begin{equation}
A(x)h^{\beta}=\sqrt{\frac{r}{2nh}} \quad \Leftrightarrow \quad h=\left(\frac{r}{2A^2(x)n}\right)^{1/(2\beta+1)}=:Cr^{1/(2\beta+1)},\label{y3}
\end{equation}
so that $nh\rightarrow \infty$. We therefore have from Remark \ref{rem92} and (\ref{y3})
\begin{equation}
l_1(x,n)\sim \frac{1}{\sqrt{2nh}}\left(t_{\alpha}\sqrt{\rho_{\star}(x)}+\sqrt{r}\right)=\frac{1}{\sqrt{2nC}}\frac{1}{r^{1/2(2\beta+1)}}\left(t_{\alpha}\sqrt{\rho_{\star}(x)}+\sqrt{r}\right).\label{y41}
\end{equation}
Applying (\ref{y1}) and (\ref{y2}), the right-hand side in (\ref{y41}) attains its minimum at
\begin{equation}
r_0(x)=\frac{t_{\alpha}^2}{4\beta^2}\rho_{\star}(x).
\end{equation}
Thus, the result follows by replacing $r$ by $r_0(x)$ in (\ref{y3}) and (\ref{y41}).
\end{proof}

\begin{proof}[\bf Proof of Theorem \ref{pra4}]
	Assume that $F \in {\cal C}_{S}^{1}(\mathbb{R})$. By (\ref{31v3}), we have the identity
	\begin{equation}
		L_{2, h}^{(1)}\left(\hat{F}_{n} ; x\right)-\rho_{\star}(x)=T_{1, n}+L_{2, h}(\rho ; x)-\rho_{\star}(x) . \label{66a}
	\end{equation}As in the proof of Theorem $\ref{pra3}$, we have
	\begin{equation}
		MSE\left(L_{2, h}^{(1)}\left(\hat{F}_{n} ; x\right)\right)=\mathbb{E} T_{1,n}^{2}+\left(L_{2, h}(\rho ; x)-\rho_{\star}(x)\right)^{2} . \label{68a}
	\end{equation}
	Let $u, v \in[0,1]$. Using the increase of $F$ and the fact that $\mathbb{E} S_{1}(u) S_{1}(v)=u \wedge v$, it can be checked from (\ref{52a}) that
	\begin{align}
		&\mathbb{E} Z_{1}(u) Z_{1}(v)=F(x+h(u\wedge v)+h)-F(x+h(u \vee v)-h)\nonumber\\
		&	-(F(x+h u+h)-F(x+h u-h))(F(x+h v+h)-F(x+h v-h)) . \label{69a}
	\end{align}
	By (\ref{53a}), we have
	\[
	\mathbb{E} W_{1, n}(u) W_{1, n}(v)=n \mathbb{E} Z_{1}(u) Z_{1}(v) .
	\]
	Hence, we obtain from (\ref{54a}) and Fubini's theorem
	\begin{align*}
		&\mathbb{E} T_{1, n}^{2}=\frac{1}{(2 n h)^{2}} \mathbb{E}\left(\widetilde{\mathbb{E}} W_{1, n}\left(\widetilde{S}_{1}\right)\right)^{2}=\frac{1}{(2 n h)^{2}} \mathbb{E}\times\widetilde{\mathbb{E}} W_{1,n}\left(\widetilde{R}_{1}\right) W_{1, n}\left(\widetilde{S}_{1}\right)\nonumber\\
		&=\frac{1}{ 4 n h^{2}} \mathbb{E}\times\widetilde{\mathbb{E}} Z_{1}\left(\widetilde{R}_{1}\right) Z_{1}\left(\widetilde{S}_{1}\right)\\&=\frac{1}{4 n h^{2}}\left\{\widetilde{\mathbb{E}}\left(F\left(x+h\left(\widetilde{R}_{1} \wedge \widetilde{S}_{1}\right)+h\right)-F\left(x+h\left(\widetilde{R}_{1} \vee \widetilde{S}_{1}\right)-h\right)\right)\right.\nonumber\\
		&\left.-\left(\widetilde{\mathbb{E}}\left(F\left(x+h \widetilde{S}_{1}+h\right)-F\left(x+h \widetilde{S}_{1} - h\right)\right)\right)^{2}\right\}.
	\end{align*}
	This and (\ref{68a}) show the equality in (\ref{67a}).  Finally, observe that
	\begin{equation}
		\widetilde{\mathbb{E}}\left(F\left(x+h \widetilde{S_{1}}+h\right)-F\left(x+h \widetilde{S}_{1}-h\right)\right)=2 h \widetilde{\mathbb{E}} \rho\left(x+h \widetilde{S}_{2}\right)=2 h L_{2, h}(\rho ; x). \label{70a}
	\end{equation}
	The increase of $F$ implies that
	\begin{align*}&\widetilde{\mathbb{E}}\left(F\left(x+h\left(\widetilde{R}_{1} \wedge \widetilde{S}_{1}\right)+h\right)-F\left(x+h\left(\widetilde{R}_{1} \vee \widetilde{S}_{1}\right)-h\right)\right)\\ &\leq \widetilde{\mathbb{E}}\left(F\left(x+h \widetilde{S}_{1}+h\right)-F\left(x+h \widetilde{S}_{1} -h\right)\right).\end{align*}
	This, together with (\ref{70a}), shows the inequality in (\ref{67a}) and completes the proof.
\end{proof}

\begin{proof}[\bf Proof of Theorem \ref{te47}]  If $\rho_{\star}(x)=0$, we have from Proposition \ref{prbost}, Remark \ref{re14}, and (\ref{z5})
\[
L_{2, h}(\rho ; x) \leq \frac{7}{8} \omega_{2}(\rho_{ x},[x-2 h, x+2 h] ; h) \leq  A(x) h^{\beta} .
\]
This implies, by virtue of Theorem \ref{pra4} and (\ref{n4}), that
\begin{equation}
MSE\left(L_{2, h}^{(1)}\left(\hat{F}_{n} ; x\right)\right) \leq  A(x) \frac{h^{\beta}}{2 n h}+\left( A(x) h^{\beta}\right)^{2}= A(x) g_{\beta}(h). \label{78a}
\end{equation}
This and Chebyshev's inequality show part (a).

If $0<\beta<1$, we minimize the function $g_{\beta}(h)$. Straightforward computations show that $g_{\beta}(h)$ attains its minimum at $h=h_{\beta}$ as defined in (\ref{q4}). Therefore, part (b) follows from (\ref{78a}) and Chebyshev's inequality.\end{proof}

\begin{proof}[\bf Proof of Theorem \ref{te48}] By (\ref{s4}), (\ref{57a}), and the first inequality in (\ref{t4}), we have
\[
\frac{1}{d_{k+1}^{2}} \mathbb{E} \times \widetilde{\mathbb{E}} Z_{k+1}^{2}\left(\widetilde{S}_{1}\right) \leq \frac{1}{d_{k+1}^{2}}\binom{2 k}{k} 2 h(\rho(x)+r)=c_{k+1}.
\]

Let $\delta>0$. From (\ref{r4}) and the second inequality in \rf{t4}, we get
\begin{align}&
P\left(\left|L_{k+2, h}^{(k+1)}\left(\hat{F}_{n} ; x\right)-(\rho^{(k)})_{\star}(x)\right| \geq c_{k+1} d_{k+1} \delta+\frac{1}{(2 h)^{k}} \sqrt{\frac{s}{2 n h}}\right) \nonumber\\
&\leq P\left(\left|T_{k+1, n}\right| \geq c_{k+1} d_{k+1} \delta\right)=P\left(\left| \widetilde{\mathbb{E}} W_{k+1, n}\left(\widetilde{S_{1}} \right) \right|\geq n c_{k+1} d_{k+1}(2 h)^{k+1} \delta\right),\label{79a}
\end{align} where the last equality follows from (\ref{54a}). Applying Lemma \ref{lea1} with $d=d_{k+1}$, and replacing $W_n$, $\widetilde{R}$, and $c$, by $W_{k+1,n}$, $\widetilde{S}_1$, and $c_{k+1}$, we obtain
\begin{equation}
	P\left(\left|\widetilde{E} W_{k+1, n}\left(\widetilde{S}_{1}\right)\right| \geq n c_{k+1} d_{k+1}(2 h)^{k+1} \delta\right) \leq 2 e^{-n c_{k+1} \tau\left((2 h)^{k+1} \delta\right)} . \label{80a}
\end{equation}
The result follows from (\ref{79a}) and (\ref{80a}) by choosing $\delta$ in such a way that
\[
\tau\left((2 h)^{k+1} \delta\right)=\frac{\log (2 / \alpha)}{n c_{k+1}} .
\]\end{proof}

\begin{proof}[\bf Proof of Theorem \ref{te413}]  As in the proof of Theorems \ref{pra3} and \ref{pra4}, it will suffice to estimate $\mathbb{E} T_{k+1, n}^{2}$. To this end, we have from (\ref{53a})
	\[
	\mathbb{E} W_{k+1, n}^{2}(\theta)=n \mathbb{E} Z_{k+1}^{2}(\theta), \quad-1 \leq \theta \leq 1 .
	\]
	We thus have from (\ref{54a}), Schwarz's inequality and Fubini's theorem
	\begin{align}
		&\mathbb{E}T_{k+1, n}^{2}=\frac{1}{n^{2}(2 h)^{2(k+1)}} \mathbb{E}\left(\mathbb{\widetilde{E}} W_{k+1, n}\left(\widetilde{S}_{1}\right)\right)^{2} \leq \frac{1}{n^{2}(2 h)^{2(k+1)}} \mathbb{E}\times\mathbb{\widetilde{E}} W_{k+1, n}^{2}\left(\widetilde{S}_{1}\right) \nonumber\\
		&=\frac{1}{n(2 h)^{2(k+1)}} \mathbb{E}\times\widetilde{\mathbb{E}} Z_{k+1}^{2}\left(\widetilde{S}_{1}\right) \leq\binom{ 2 k}{k} \frac{1}{n(2 h)^{2 k+1}} D_{k + 2, k, h}(\rho ; x), \label{71a}
	\end{align}
	where the last inequality follows from (\ref{57a}). This completes the proof. \end{proof}

\section*{Acknowledgments}
First and third authors were supported by Gobierno de Aragón Research Project E48\_23R, and second author by the Research Project S41\_23R. First and third authors are also supported by Spanish Research Project PID2024-155364NB-I00 (MINECO/FEDER).


\begin{thebibliography}{4}
					\bibitem{adalra}\textsc{Adell, J.A., Alcalá, J.T., and Sangüesa, C.} (2025). Random positive linear operators and their applications to nonparametric statistics. \textit{TEST} \textbf{34} 981--1011.	
	\bibitem{adsaup}\textsc{Adell, J.A. and Sangüesa, C.} (2001). Upper estimates in direct inequalities for Bernstein-type operators. \textit{J. Approx. Theory} \textbf{109} 229--241.
	
				\bibitem{alto}
	\textsc{Altomare, F. and Campiti, M.} (1994). \textit{Korovkin-type Approximation Theory and Its Applications}, De Gruyter, Berlin.
	

\bibitem{azanot}\textsc{Azzalini, A.} (1981). A note on the estimation of a distribution function and quantiles by a kernel method. \textit{Biometrika}, \textbf{68} 326--328.
	\bibitem{baappl}\textsc{Babu, G.J., Canty, A.J., and Chaubey, Y.P.} (2002). Application of Bernstein polynomials for smooth estimation of a distribution and density function. \textit{J. Stat. Plan. Inf.} \textbf{105} 377--392.
				
\bibitem{calo}
\textsc{Calonico, S., Cattaneo, M. D., and Farrell, M. H.} (2018).
On the effect of bias estimation on coverage accuracy in nonparametric inference. \textit{J. Am. Stat. Assoc.}
\textbf{113} 767--779

\bibitem{clhake}
\textsc{Cline, D. B. H. and Hart, J.D.} (1991).
Kernel estimation of densities with discontinuities or discontinuous derivatives. \textit{Statist.}
\textbf{22} 69--84

\bibitem{dvkias}
		\textsc{Dvoretzky, A, Kiefer, J.C. and Wolfowitz, J.} (1956).
		Asymptotic minimax character of the sample distribution function and of the classical multinomial estimator. \textit{Ann. Math. Statist.}
		\textbf{33} 642--669
		
				\bibitem{duwean}
\textsc{Dümbgen, L. and Wellner, J.A.} (2023).
A new approach to test and confidence bands for distribution functions. \textit{Ann. Statist.}
\textbf{51} 260--289		

				\bibitem{feanin}
	\textsc{Feller, W.} (1997). \textit{An Introduction to Probability Theory and Its Applications}, 2nd. Ed. Vol. II
	Wiley, New York.
	
		\bibitem{fuhike}\textsc{Funke, B. and Hirukawa, M.} (2019).
	Nonparametric estimation and testing on discontinuity of positive supported densities: a kernel truncation approach. \textit{Econom. Stat.}
	\textbf{9} 156-170.
	
	
	\bibitem{ginico}\textsc{Giné, E. and Nickl, R.} (2010).
Confidence bands in density estimation.  \textit{Ann. Statist.}
\textbf{38} 1122--1170.

	\bibitem{hall}\textsc{Hall, P.} (1992).
Effect of bias estimation on coverage accuracy of bootstrap confidence intervals for a probability density.  \textit{Ann. Statist.}
\textbf{20} 675--694.

\bibitem{jothep}	\textsc{Jones, M.C.} (1990).
		The performance of kernel density functions in kernel distribution function estimation. \textit{Statist. Probab. Lett.}
		\textbf{9} 129--132.
				\bibitem{kwexte}
						\textsc{Kwon, E.G.} (1995).
				Extension of H\"{o}lder's inequality (I). \textit{Bull. Aust. Math. Soc.}
				\textbf{51} 369--375.
					\bibitem{leones}
		\textsc{Leblanc, A.} (2012).
		On estimating distribution functions using Bernstein polynomials. \textit{Ann. Inst. Stat. Math.}
		\textbf{64} 919--943.
							\bibitem{maloca}
		\textsc{Maillard, O.-A.} (2021).
		Local Dvoretzky-Kiefer-Wolfowitz confidence bands.  \textit{Math. Methods Statist.}
		\textbf{30} 16--46.
				\bibitem{mathet}
		\textsc{Massart, P.} (1990).
		The tight constant in the Dvoretzky-Kiefer-Wolfowitz inequality. \textit{Ann. Probab.}
		\textbf{18} 1269--1283.
		
					\bibitem{paones}
	\textsc{Parzen, E.} (1962).
On estimation of a probability density function and mode. \textit{Ann. Statist.}
	\textbf{33} 1065--1076.
	
\bibitem{parolo}		\textsc{Patschkowski, T. and Rohde, A.} (2019).
Locally adaptive confidence bands. \textit{Ann. Statist.}
\textbf{47} 349--381.

	\bibitem{rorema}
	\textsc{Rosenblatt, M.} (1956).
	Remarks on some nonparametric estimates of a density function. \textit{Ann. Math. Statist.}
	\textbf{27} 832--837.
	
	\bibitem{scesti}		\textsc{Schuster, E. F.} (1969).
Estimation of a probability density function and its derivatives. \textit{Ann. Math. Statist.}
	\textbf{40} 1187--1195.		

    \bibitem{scott}	\textsc{Scott, D. W.} (2015).  \textit{Multivariate Density Estimation: Theory, Practice, and Visualization}.
		John Wiley \& Sons, New York.
		
		\bibitem{shweem}	\textsc{Shorack, G.R. and Wellner, J.A.} (2009). \textit{Empirical Processes with Applications to Statistics}.  Classics in Applied Mathematics, Philadelphia, PA. Reprint of the 1986 original [MR0838963].
		
			\bibitem{siweak}		\textsc{Silverman, B.W.} (1978).
		Weak and strong uniform consistency of the kernel estimate of a density and its derivatives. \textit{Ann. Statist.}
		\textbf{6} 177--184.	
		
	\bibitem{sidens}	\textsc{Silverman, B.W.} (1986).  \textit{Density Estimation for Statistics and Data Analysis}.
		Chapman \& Hall, London.
		
	\bibitem{stpago}
	\textsc{Stepanova, N.A. and Pavlenko, T.} (2018).
	 Goodness-of-fit tests based on sup-functionals of weighted empirical processes. \textit{Teor. Veroyatn. Primen.}
	\textbf{63} 358--388.
		
			\bibitem{tekern}		\textsc{Tenkorang, N., Ohene-Obeng, K.A., and Su, S.} (2025).
	 Kernel density estimation and convolution revisited. \textit{ArXiv: 2510.19960}

	\bibitem{tsyba}	\textsc{Tsybakov, A. B.} (2009).  \textit{Introduction to Nonpapametric Estimation}.
	 Springer, New York.

            \bibitem{wajoke}   \textsc{Wand, M.P. and Jones, M.C.} (1995).  \textit{Kernel Smoothing}.
	 Chapman \& Hall, London.
	
\bibitem{walt}
\textsc{Walther, G., Ali, A., Shen, X., and Boyd, S.} (2022).
Confidence bands for a log-concave density.  \textit{J. Comput. Graph. Stat.}
\textbf{31} 1426--1438.	


\bibitem{wachsm}
\textsc{Wang, J., Cheng, F., and Yang, L.} (2013).
Smooth simultaneous confidence bands for cumulative distribution functions. \textit{J. Nonparametr. Stat.}
\textbf{25} 395--407.	
	
	

				\bibitem{zhesti}	\textsc{Zhang, S., Li, Z., and Zhang, Z.} (2020).
	Estimating a distribution function at the boundary. \textit{Austrian J. Stat.}
		\textbf{49} 1--23.




		
			\end{thebibliography}
\end{document}